\title
[Non-taut RDPs and the height of K3 surfaces]
{Inseparable maps on $W_n$-valued local cohomology groups of non-taut rational double point singularities 
and the height of K3 surfaces}
\author{Yuya Matsumoto}
\address{Department of Mathematics, Faculty of Science and Technology, Tokyo University of Science, 2641 Yamazaki, Noda, Chiba, 278-8510, Japan}
\email{\url{matsumoto.yuya.m@gmail.com}}
\email{\url{matsumoto_yuya@ma.noda.tus.ac.jp}}
\thanks{This work was supported by JSPS KAKENHI Grant Number 16K17560 and 20K14296.}
\date{2022/03/22}
\keywords{rational double points, Frobenius, local cohomology, K3 surfaces, height of K3 surfaces}
\subjclass[2010]{14J28 (Primary) 14L15, 14J17, 14B15, 13A35 (Secondary)}
 \theoremstyle{plain}
 \newtheorem{thm}{Theorem}[section]
 \newtheorem{lem}[thm]{Lemma}
 \newtheorem{prop}[thm]{Proposition}
 \newtheorem{cor}[thm]{Corollary}
 \theoremstyle{definition}
 \newtheorem{rem}[thm]{Remark}
 \newtheorem{defn}[thm]{Definition}
 \newtheorem{example}[thm]{Example}
 \newtheorem{conv}[thm]{Convention}
 \newtheorem{setting}[thm]{Setting}
\newcommand{\cO}{\mathcal O}
\newcommand{\cI}{\mathcal I}
\newcommand{\cX}{\mathcal X}
\newcommand{\rI}{\mathrm I}
\newcommand{\rII}{\mathrm {II}}
\newcommand{\floor}[1]{\lfloor #1 \rfloor}
\newcommand{\abs}[1]{\lvert #1 \rvert}
\newcommand{\card}[1]{\lvert #1 \rvert}
\newcommand{\localcoh}[3][2]{H^{#1}_{#2}(#3)}
\newcommand{\localcohtorsion}[4][2]{\localcoh[#1]{#2}{#3}[#4]}
\newcommand{\map}[4][\to]{#2 \colon #3 #1 #4}
\newcommand{\namedto}[1]{\xrightarrow{#1}}
\newcommand{\injto}{\hookrightarrow}
\newcommand{\isomto}{\stackrel{\sim}{\to}}
\newcommand{\isomfrom}{\stackrel{\sim}{\leftarrow}}
\newcommand{\restrictedto}[1]{\rvert_{#1}}
\newcommand{\set}[1]{\{#1\}}
\newcommand{\cat}[1]{\{#1\}}
\newcommand{\bP}{\mathbb P}
\newcommand{\bN}{\mathbb N}
\newcommand{\bZ}{\mathbb Z}
\newcommand{\bQ}{\mathbb Q}
\newcommand{\bC}{\mathbb C}
\newcommand{\bF}{\mathbb F}
\newcommand{\bG}{\mathbb G}
\newcommand{\Gm}{\mathrm{\bG_m}}
\newcommand{\idealm}{\mathfrak{m}}
\newcommand{\idealp}{\mathfrak{p}}
\newcommand{\idealq}{\mathfrak{q}}
\newcommand{\thpower}[2]{{#1}^{(#2)}}
\newcommand{\pthpower}[1]{\thpower{#1}{p}}
\DeclareMathOperator{\Km}{Km}
\DeclareMathOperator{\Ann}{Ann}
\DeclareMathOperator{\height}{ht}
\DeclareMathOperator{\Sing}{Sing}
\DeclareMathOperator{\Spec}{Spec}
\DeclareMathOperator{\Ext}{Ext}
\DeclareMathOperator{\Supp}{Supp}
\DeclareMathOperator{\Ker}{Ker}
\DeclareMathOperator{\Coker}{Coker}
\DeclareMathOperator{\Bl}{Bl}
\DeclareMathOperator{\Fix}{Fix}
\DeclareMathOperator{\Exc}{Exc}
\DeclareMathOperator{\Br}{Br}
\DeclareMathOperator{\charac}{char}
\DeclareMathOperator{\Proj}{Proj}
\DeclareMathOperator{\Frac}{Frac}
\DeclareMathOperator{\rank}{rank}
\DeclareMathOperator{\Pic}{Pic}
\DeclareMathOperator{\End}{End}
\DeclareMathOperator{\disc}{disc}
\DeclareMathOperator{\Frob}{Frob}
\DeclareMathOperator{\Image}{Im}
\DeclareMathOperator{\Hom}{Hom}
\DeclareMathOperator{\ord}{ord}
\newcommand{\et}{\mathrm{\acute et}}
\newcommand{\crys}{\mathrm{crys}}
\newcommand{\Het}{H_\et}
\newcommand{\Hcrys}{H_\crys}
\newcommand{\Witt}[2][]{W_{#1}(#2)}
\newcommand{\functorspace}{{\mathord{-}}}
\newcommand{\WO}{W \mathcal{O}}
\newcommand{\divides}{\mid}
\newcommand{\notdivides}{\nmid}
\newcommand{\Legendre}[2]{(\frac{#1}{#2})}
\newcommand{\negligible}[1]{\{#1\}}
\newcommand{\gap}{\delta}
\begin{document}

\begin{abstract}
We consider rational double point singularities (RDPs) that are non-taut, 
which means that the isomorphism class is not uniquely determined from the dual graph of the minimal resolution.
Such RDPs exist in characteristic $2,3,5$.
	We compute the actions of Frobenius, and other inseparable morphisms,
	on $W_n$-valued local cohomology groups of RDPs.
	Then we consider RDP K3 surfaces admitting non-taut RDPs.
	We show that the height of the K3 surface, which is also defined in terms of the Frobenius action on $W_n$-valued cohomology groups,
	is related to the isomorphism class of the RDP.
\end{abstract}

\maketitle

\section{Introduction} \label{sec:introduction}

\subsection{Non-taut rational double points}

In this subsection we recall non-taut rational double points.

Rational double point singularities (RDPs for short) are the simplest normal singularities in dimension $2$.
The fundamental invariant of an RDP 
is the dual graph of the exceptional divisor of the minimal resolution,
which is a Dynkin diagram.
In most cases the dual graph determines the isomorphism class of the singularity (in a fixed characteristic $p \geq 0$). 
Such RDPs are called \emph{taut}. 
However in some special cases there are more than one isomorphism classes,
in which cases the RDPs are called \emph{non-taut}.

To describe them we define, 
for each pair of a characteristic $p \geq 0$ and a Dynkin diagram $S$ (which is $A_N$, $D_N$, or $E_N$),
a non-negative integer $r_{\max} = r_{\max}(S) = r_{\max}(p,S)$ as follows:
\[
r_{\max}(p,S) = \begin{cases}
\floor{N/2}-1 & \text{if $(p,S) = (2,D_N)$,} \\ 
1             & \text{if $(p,S) = (2,E_6)$,} \\
3             & \text{if $(p,S) = (2,E_7)$,} \\
4             & \text{if $(p,S) = (2,E_8)$,} \\
1             & \text{if $(p,S) = (3,E_6), (3,E_7)$,}\\
2             & \text{if $(p,S) = (3,E_8)$,} \\
1             & \text{if $(p,S) = (5,E_8)$,} \\
0             & \text{otherwise.}
\end{cases}
\]

\begin{thm}[Artin \cite{Artin:RDP}] \label{thm:Artin}
	Let $p$ and $S$ as above.
	Then there exist exactly $r_{\max} + 1$ isomorphism classes of RDPs in characteristic $p$ whose dual graph is a Dynkin diagram of type $S$.
	Explicit equations are given and,
	when $r_{\max} > 0$, the isomorphism classes are distinguished by the symbols $S^r$ ($0 \leq r \leq r_{\max}$), see Table \ref{table:non-taut RDP} in Section \ref{sec:RDP}.
\end{thm}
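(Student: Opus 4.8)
The plan is to follow the route of du~Val and Artin: first reduce the classification to hypersurface equations, then enumerate finitely many normal forms per Dynkin type from the minimal resolution, and finally separate those forms by a characteristic-$p$ invariant. For the reduction, recall that an RDP is by definition a rational surface singularity of multiplicity $2$; being Cohen--Macaulay of minimal multiplicity it is then a hypersurface of embedding dimension $3$, so $\widehat{\cO}_{X,x}\cong k[[x,y,z]]/(f)$ with $f$ of multiplicity $2$, and on the minimal resolution $\pi\colon\tilde X\to X$ the exceptional divisor is a connected configuration of smooth rational $(-2)$-curves whose dual graph is negative definite, hence of type $A_N$, $D_N$ or $E_N$. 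This fixes the admissible graphs $S$; what is at stake is the number of formal isomorphism classes of $f$ realising a given $S$ in characteristic $p$.

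\textbf{Enumeration of normal forms.}
For fixed $S$ I would argue by descending induction on $N$: blowing up the closed point of an RDP of type $S$ produces a surface whose singularities are again RDPs, of types obtained from $S$ by an explicit combinatorial recipe, and conversely $f$ is reconstructed from this data. Carrying this out while assuming only that $k$ has characteristic $p$, one sees that in all but finitely many pairs $(p,S)$ the extra monomials available over $\bF_2$, $\bF_3$, $\bF_5$ can be removed by a coordinate change, so the characteristic-$0$ normal form still applies and there is a single class; in the cases $(2,D_N)$, $(2,E_N)$ ($N=6,7,8$), $(3,E_N)$ ($N=6,7,8$) and $(5,E_8)$ a genuinely new family of terms survives, indexed by an integer $r$ that runs over $\{0,1,\dots,r_{\max}(p,S)\}$ with $r_{\max}$ as tabulated, giving the equations $S^r$ of Table~\ref{table:non-taut RDP} in Section~\ref{sec:RDP}. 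The nontrivial half here is completeness --- that every $f$ of type $S$ is formally isomorphic to one of the $S^r$ --- which I would prove by putting $f$ in a weighted-homogeneous normal form adapted to the resolution, using the $(-2)$-curve configuration to pin down the leading part, and then killing higher-order terms by successive automorphisms of $k[[x,y,z]]$; this is a finite but intricate case analysis.

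\textbf{Separating the classes, and the main obstacle.}
It remains to show the $S^r$ are pairwise non-isomorphic. In characteristic $0$ they all coincide, so the distinguishing invariant must be intrinsically characteristic-$p$; one convenient choice --- the one the present paper is built around --- is the $W_n$-valued local cohomology $H^2_{\idealm}(W_n\cO_X)$ together with its Frobenius action, or on the resolution side the modules $R^1\pi_* W_n\cO_{\tilde X}$. Although $R^1\pi_*\cO_{\tilde X}=0$ by rationality, the Witt-vector analogues need not vanish, and their lengths and Frobenius-module structures are formal isomorphism invariants of $X$; evaluating them on each explicit $S^r$ should recover the parameter $r$, so the $r_{\max}+1$ classes are genuinely distinct and $r$ acquires an intrinsic meaning. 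I expect the main obstacle to be precisely this last computation together with the completeness step above: one needs an invariant that is simultaneously computable on explicit equations and manifestly intrinsic, and one must be certain that the normal-form reduction neither overcounts nor silently identifies two of the $S^r$ --- the subtle characteristic-$p$ phenomenon that has no characteristic-$0$ analogue.
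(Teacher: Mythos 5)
The paper does not prove this statement at all: Theorem~\ref{thm:Artin} is quoted from Artin's paper \cite{Artin:RDP}, and Table~\ref{table:non-taut RDP} simply records his equations. So there is no internal proof to compare your proposal against; what can be assessed is whether your sketch would amount to a proof of Artin's classification. In outline it has the right shape (reduction to a hypersurface $k[[x,y,z]]/(f)$ of multiplicity $2$, enumeration of normal forms guided by the resolution, then separation of the resulting equations by a characteristic-$p$ invariant), and this is indeed the du~Val--Artin strategy. But the entire mathematical content of the theorem sits in the step you defer as ``a finite but intricate case analysis'': proving that \emph{every} $f$ with dual graph $S$ is formally equivalent to one of the listed $S^r$, and that no further parameters survive the coordinate changes. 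Without carrying that out (or citing Artin for it, as the paper does), the count ``exactly $r_{\max}+1$'' is not established in either direction --- neither that there are at most $r_{\max}+1$ classes nor, on its own, that the listed equations exhaust them.

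Your proposed separating invariant --- Frobenius acting on $\localcohtorsion{\idealm}{\Witt[n]{A}}{I}$ --- is a sound choice for the easier half (pairwise non-isomorphy of the listed equations), and in fact Propositions~\ref{prop:local Frob:2:D}, \ref{prop:local Frob:2:E8} and \ref{prop:local Frob:E:bis} of this paper carry out exactly those computations and do distinguish all the $S^r$ (for $E_8$ in characteristic $2$ one needs both the $\fm_A$-torsion and the $I_2$-torsion computations to separate $r=1$ from $r=0$). Two caveats: this is not how Artin himself separates the classes, and those computations take the normal forms of Table~\ref{table:non-taut RDP} as input, so they cannot substitute for the completeness step --- they only show the listed equations are mutually non-isomorphic and that $r$ is intrinsic. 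As written, then, your proposal is a correct roadmap with the principal step missing rather than a proof.
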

The isomorphism classes $S^r$ are ordered in the way that the coindex $r$ is lower semi-continuous in families of RDPs with the same dual graph $S$.

\subsection{Inseparable maps on $W_n$-valued local cohomology groups of non-taut RDPs}

In this paper we consider 
$W_n$-valued local cohomology groups $\localcoh{\idealm_A}{\Witt[n]{A}}$, 
and their $I$-torsion parts $\localcohtorsion{\idealm_A}{\Witt[n]{A}}{I}$ for ideals $I \subset A$,
of (mainly non-taut) RDPs $A$ for some $n$.
We compute the Frobenius actions on certain classes of such cohomology groups.
See Section \ref{sec:RDP:Frobenius} for precise results.
The behaviors of the Frobenius actions depend heavily on the isomorphism classes of the non-taut RDP,
and we can derive Theorem \ref{thm:main:introduction} from these results.

More thorough studies concerning kernels of powers of Frobenius
can distinguish isomorphism classes among the same dual graph, as will be proven in a subsequent work \cite{Liedtke--Martin--Matsumoto:RDPtors} by Liedtke, Martin, and the author.
Also we refer to Tanaka's work \cite{Tanaka:taut}
concerning relations of tautness of $2$-dimensional singularities 
and other kind of Frobenius-related properties, such as $F$-regularity and $F$-purity.

We also compute (Section \ref{sec:RDP:mu alpha}) the maps 
induced by $\mu_p$- or $\alpha_p$-quotient morphisms,
which are another kind of inseparable morphisms.
This is used to show Theorem \ref{thm:quotient:introduction}.

\subsection{Height of K3 surfaces with non-taut RDPs: main results}

The \emph{height} is an invariant of K3 surfaces in positive characteristic 
which takes values in $\set{1, 2, \dots, 10} \cup \set{\infty}$.
Among several characterizations, the most relevant to our purpose is the one by the Frobenius actions on $W_n$-valued cohomology groups (see Theorem \ref{thm:vdGK}).
Proposition \ref{prop:local to height:bis} connects this and the computations of Frobenius actions on $W_n$-valued local cohomology groups of RDPs,
and we obtain the following result on the height of RDP K3 surfaces 
(by which we mean a proper surface with only RDP singularities
whose minimal resolution is a K3 surface in the usual sense).

\begin{thm}[see Theorem \ref{thm:main} for a detailed statement] \label{thm:main:introduction}
	For each Dynkin diagram $S$ and characteristic $p$ (with $r_{\max}(p,S) > 0$),
	we give a subsequence $(r_1, r_2, \dots, r_l)$ of $(r_{\max}(p,S), \dots, 2, 1)$
	with the following properties.
	Suppose an RDP K3 surface $Y$ in characteristic $p$ 
	admits an RDP of type $S^r$.
	\begin{itemize}
		\item If $r > 0$, then $\height(Y) \leq l$ and $r = r_{\height(Y)}$.
		\item If $r = 0$, then $\height(Y) > l$.
	\end{itemize}
	
	In short, $\height(Y)$ determines $r$, 
	and if $r > 0$ then $r$ determines $\height(Y)$.
\end{thm}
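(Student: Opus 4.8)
\emph{Overall strategy.}
The plan is to translate $\height(Y)$ into a statement about Frobenius on $W_n$-valued cohomology, to compare the $W_n$-cohomology of the K3 surface with the local $W_n$-cohomology at the non-taut RDP, and then to substitute the explicit local computations of Section~\ref{sec:RDP}. Write $\pi\colon X\to Y$ for the minimal resolution, so that $X$ is a K3 surface and $\height(Y)=\height(X)$ by definition. By Theorem~\ref{thm:vdGK} the height is encoded in the Frobenius operator $F$ on the tower $\set{H^2(X,\Witt[n]{\cO_X})}_{n\ge1}$: it is the least $n$ with $F\ne0$ on $H^2(X,\Witt[n]{\cO_X})$, and it is $\infty$ if $F$ vanishes on all of them. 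So the task is to locate this first level.

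\emph{From the K3 surface to the RDP.}
Let $y_0$ be the non-taut point, of type $S^r$, and set $A=\cO_{Y,y_0}$. Since RDPs are rational, $R^i\pi_*\cO_X=0$ for $i>0$, so $H^1(Y,\cO_Y)=H^1(X,\cO_X)=0$, and by induction on the Witt length $H^1(Y,\Witt[n]{\cO_Y})=0$ for all $n$. The local cohomology sequence for $\set{y_0}\hookrightarrow Y$ then gives an $F$-equivariant exact sequence
\[
0\to H^1(Y\setminus\set{y_0},\Witt[n]{\cO})\to \localcoh{\idealm_A}{\Witt[n]{A}}\to H^2(Y,\Witt[n]{\cO_Y})\to\cdots,
\]
while $H^2(Y,\Witt[n]{\cO_Y})$ and $H^2(X,\Witt[n]{\cO_X})$ differ only by a contribution measured by $R^i\pi_*\Witt[n]{\cO_X}$, which is again governed by the local groups $\localcoh{\idealm_A}{\Witt[n]{A}}$ of Section~\ref{sec:RDP}. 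I would use the $I$-torsion subquotients $\localcohtorsion{\idealm_A}{\Witt[n]{A}}{I}$ to isolate a piece of $\localcoh{\idealm_A}{\Witt[n]{A}}$ which, for $n\le l$, (i) maps injectively into $H^2(X,\Witt[n]{\cO_X})$ (it meets the kernel above trivially and survives the comparison with $X$), and (ii) carries all of the $F$-action visible on $H^2(X,\Witt[n]{\cO_X})$. Granting (i) and (ii), the first level at which $F\ne0$ on $H^2(X,\Witt[n]{\cO_X})$ is the first level at which $F\ne0$ on that local piece.

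\emph{Reading off $r$.}
Now substitute the explicit description, from Section~\ref{sec:RDP}, of the action of $F$ (and of the pullbacks along purely inseparable $\Spec B\to\Spec A$) on $\localcohtorsion{\idealm_A}{\Witt[n]{A}}{I}$ for an RDP of type $S^r$. For each $(p,S)$ with $r_{\max}(p,S)>0$ this yields an integer $l\le10$ and a strictly decreasing sequence $r_1>r_2>\dots>r_l$ in $\set{1,\dots,r_{\max}(p,S)}$ such that, for $r>0$, the local $F$-action first becomes nonzero at level $i$ exactly when $r=r_i$ (and it never does for $r$ not among the $r_i$), whereas for $r=0$ it is zero through level $l$. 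Combined with the previous step: if $r>0$, then $r=r_i$ for a unique $i\le l$, $F$ is nonzero on $H^2(X,\Witt[i]{\cO_X})$ and zero on $H^2(X,\Witt[n]{\cO_X})$ for $n<i$, so $\height(Y)=i$ and $r=r_{\height(Y)}$; moreover coindices not appearing among the $r_i$ cannot occur on an RDP K3 surface, which is why $(r_1,\dots,r_l)$ is in general only a subsequence of $(r_{\max},\dots,2,1)$. If $r=0$, then $F=0$ on $H^2(X,\Witt[l]{\cO_X})$, so $\height(Y)>l$.

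\emph{Where the difficulty lies.}
The heart of the matter is (i) and (ii): the faithfulness of the local-to-global comparison through level $l$. For (i) one must control the boundary group $H^1(Y\setminus\set{y_0},\Witt[n]{\cO})$, the $W_n$-cohomology of a punctured neighbourhood of the RDP, together with the $R^i\pi_*$-discrepancy between $Y$ and $X$, and show the chosen $I$-torsion piece is not absorbed into either. Point (ii), the harder half (needed to improve $\height(Y)\le i$ to equality, and to obtain $\height(Y)>l$ when $r=0$), requires that nothing else in $H^2(X,\Witt[n]{\cO_X})$, be it the smooth locus, the exceptional divisor, or the other singular points, is $F$-nontrivial at levels $n\le l$. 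Here I would use that $H^2$ of a K3 surface is small, the formal Brauer group $\hatBr_X$ being one-dimensional of height $\height(Y)$, and would have to show that this one-dimensional object is entirely accounted for by the non-taut RDP up through level $l$. The local Frobenius computations themselves, which occupy most of Section~\ref{sec:RDP}, are laborious but may be taken as given.
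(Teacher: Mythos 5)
Your overall architecture matches the paper's: reduce $\height(Y)$ to the Frobenius on $H^2(Y,\Witt[n]{\cO_Y})$ via Theorem \ref{thm:vdGK}, compare with the local cohomology at the non-taut point, and feed in the explicit computations of Section \ref{sec:RDP}. But the step you label (i) and (ii) and then ``grant'' is precisely the content of the theorem that is not routine, and your sketch of how to establish it would not go through as written. The paper does not use the local cohomology long exact sequence for $\set{y_0}\hookrightarrow Y$ at all, and never needs to control $H^1(Y\setminus\set{y_0},\Witt[n]{\cO})$ or an $R^i\pi_*$-discrepancy (it works directly on the RDP surface $Y$, where $H^2(Y,\Witt[n]{\cO_Y})\to H^2(\tilde Y,\Witt[n]{\cO_{\tilde Y}})$ is already an isomorphism by rationality of the singularities). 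Instead it builds, in Proposition \ref{prop:local to height:bis}, a functorial map $\gamma_{I,n}\colon\localcohtorsion{y}{\Witt[n]{\cO_{Y,y}}}{I}\to H^2(Y,\Witt[n]{\cO_Y})$ through $\Ext^2(\cO_Y/\cI,\Witt[n]{\cO_Y})$, and the single decisive input is that $\gamma_{\fm_y,1}$ is an isomorphism between one-dimensional spaces by Serre duality (it is dual to $H^0(Y,\cO_Y)\isomto H^0(Y,k(y))$). From this, the upper bound needs no injectivity of any local piece: $\pi^*(\gamma(e))=V^{n-1}(\gamma(e'))\neq0$ because $\gamma(e')\neq0$ at level one and $V$ is injective on $H^2$ (as $H^1(\cO_Y)=0$). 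The lower bound — your harder point (ii) — follows because $R^{n-1}(\gamma(e))$ generates the one-dimensional $H^2(\cO_Y)$, so $H^2(\Witt[n]{\cO_Y})$ is generated by $\gamma(e)$ and $V(H^2(\Witt[n-1]{\cO_Y}))$, and $\pi^*$ kills both by induction. Your proposal identifies this as the crux but supplies neither this device nor a working substitute.

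Two further pieces of the paper's proof are absent from your outline. First, your claim that for each $(p,S)$ the local $F$-action ``first becomes nonzero at level $i$ exactly when $r=r_i$'' is not how the exclusions work: for $(2,E_8^1)$ the paper derives $\height(Y)\leq1$ from the $I_2$-torsion class of Proposition \ref{prop:local Frob:2:E8} and simultaneously $\height(Y)>3$ from Proposition \ref{prop:local Frob:E:bis}, and the contradiction is what removes $r=1$ from the subsequence; similarly the $(2,D_N)$ case needs the auxiliary ideals $I_j$ with $j>1$ from Setting \ref{setting:I} together with the Picard-number bound $N<22-2\height(Y)$ of Proposition \ref{prop:bound of height} to force $\height(Y)\leq3$ and to prune the sequence. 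These are not optional refinements — without them the stated subsequences for $(2,D_N)$, $N\geq8$, and $(2,E_8)$ cannot be obtained.
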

If $(p,S)$ is not $(2, D_N)$ ($N \geq 8$) nor $(2, E_8)$, 
then the subsequence is the entire sequence $(r_{\max}(p, S), \dots, 2, 1)$.

\medskip

While the index of an RDP on an RDP K3 surface is bounded by the Picard number of the minimal resolution and hence by $22$,
this theorem shows the non-existence of certain RDPs (e.g.\ $E_8^1$) in characteristic $2$ with a different reason.
In Section \ref{sec:realizable}, 
we determine which non-taut RDPs are realizable on RDP K3 surfaces (Theorem \ref{thm:existence:non-taut}).

\medskip

Now suppose $\map{\pi}{X}{Y}$ is a $G$-quotient morphism between RDP K3 surfaces,
$G \in \set{\mu_p, \alpha_p, \bZ/p\bZ}$. 
If $G = \mu_p$ or $G = \alpha_p$, 
then the ``dual'' map $\map{\pi'}{\thpower{Y}{1/p}}{X}$ is also a $G'$-quotient with $G' = \mu_p$ or $G' = \alpha_p$,
and we can use the local behavior of the pullback maps by $\pi$ and $\pi'$ 
to relate the singularities of $X$ and $Y$ to the height of $X$ and $Y$.
If $G = \bZ/p\bZ$, then $Y$ always have a non-taut RDP, to which we can apply Theorem \ref{thm:main:introduction}.
Thus we obtain the following.

\begin{thm}[see Theorems \ref{thm:quotient} and \ref{thm:quotient:etale} for a detailed statement] \label{thm:quotient:introduction}
Let $\map{\pi}{X}{Y}$ be a $G$-quotient morphism between RDP K3 surfaces,
where $G \in \set{\mu_p, \alpha_p, \bZ/p\bZ}$. 
Then we have $\height(X) = \height(Y) =: h$, 
we determine $h$ in terms of $\Sing(Y)$ and $\Sing(X)$,
and $h$ is always finite.
\end{thm}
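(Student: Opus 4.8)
The plan is to reduce the global statement to the local analysis of the pullback maps on $W_n$-valued local cohomology groups that was carried out earlier in the paper. First I would recall, via Theorem \ref{thm:vdGK}, that $\height(X)$ is detected by whether the Frobenius acts nilpotently (and with what ``order'') on the relevant $W_n$-cohomology of the K3 surface $\tilde X$ resolving $X$; the key point is that for an RDP K3 surface $Y$ the global invariant $\height(Y)$ can be read off from the Frobenius action on $H^2_\crys$ of the minimal resolution, and this in turn interacts with the $\idealm_A$-torsion local cohomology $\localcohtorsion{\idealm_A}{\Witt[n]{A}}{I}$ at each RDP $A \in \Sing(Y)$ through the exact sequences comparing the cohomology of $Y$, of its resolution, and the local contributions at the singular points. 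So Step 1 is to set up these comparison sequences and express the ``height data'' of $Y$ as a combination of a ``smooth part'' and the local pieces at the singularities.

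Step 2 is to exploit the factorization $\pi' \circ \pi$ (up to Frobenius): since $\pi \colon X \to Y$ is a $\mu_p$- or $\alpha_p$-quotient and $\pi' \colon \thpower{Y}{1/p} \to X$ is the dual quotient, the composite $\pi' \circ \pi \colon X \to \thpower{X}{1/p}$ is (a twist of) the relative Frobenius of $X$, and similarly on the $Y$ side. Pulling back along $\pi$ and then $\pi'$ therefore computes the Frobenius on cohomology. Applying the local computations from Section \ref{sec:RDP} (the behavior of the inseparable pullback maps on $\localcoh{\idealm_A}{\Witt[n]{A}}$, which depends on the isomorphism class $S^r$ of the RDP) one gets, at each singular point, a precise description of how much of the local cohomology survives the composite pullback. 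Combining this with Step 1 gives $\height(X)$ in terms of $\Sing(X)$ and $\Sing(Y)$ and, symmetrically, $\height(Y)$ in terms of the same data; comparing the two expressions — using that $\pi$ and $\pi'$ play symmetric roles — yields $\height(X) = \height(Y)$.

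Finiteness of $h$ should then follow because a $\mu_p$- or $\alpha_p$-quotient map forces the K3 surface not to be supersingular: if $\height(X) = \infty$ the Frobenius would act as zero on the relevant cohomology, but the nontrivial inseparable pullback maps computed locally (the non-taut RDPs that actually occur here contribute a nonzero ``inseparable class'') obstruct this, so $h \in \set{1,\dots,10}$. Concretely I would argue that a supersingular K3 surface admits no such quotient presentation because the Artin–Mazur formal group would be $\hatGa$, incompatible with the existence of the $\mu_p$ (resp.\ $\alpha_p$) action producing the dual quotient; alternatively one reads finiteness directly off the local formulas, which never allow the full cohomology to be killed.

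The main obstacle I anticipate is Step 2: matching the local data at $\Sing(X)$ and $\Sing(Y)$ correctly. The quotient $\pi$ can both create and resolve RDPs, and the correspondence between a non-taut RDP $S^r$ on $Y$ and the singularity it maps to (or from) on $X$ is delicate — one must track the coindex $r$ through the quotient, which is exactly where the inseparable-pullback computations of Section \ref{sec:RDP} do the real work. Getting the bookkeeping of these local contributions consistent on both sides, and verifying that the ``smooth parts'' of $X$ and $Y$ contribute equally to the height, is where the proof will require care rather than routine calculation.
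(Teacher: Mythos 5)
Your overall strategy points in the right direction—factor the Frobenii of $X$ and $Y$ through $\pi$ and $\pi'$ and feed in the local computations of Section \ref{sec:RDP}—but there are two concrete gaps that the paper's argument is specifically built to avoid, and your proposal runs into both.

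First, the mechanism that actually proves $\height(X)=\height(Y)$ and finiteness is missing. The paper introduces the \emph{height of a morphism} $\height(\pi)$ (the least $n$ with $\pi^*\neq 0$ on $H^2(\Witt[n]{\cO})$) and proves the additivity $\height(\pi\circ\pi')=\height(\pi)+\height(\pi')-1$ (Lemma \ref{lem:height}). Since $\Frob_Y=\pi\circ\pi'$ and $\Frob_X=\pi'\circ\thpower{\pi}{1/p}$ (note your composite $\pi'\circ\pi$ does not typecheck: $\pi'$ has source $\thpower{Y}{1/p}$, not $Y$), both heights equal $\height(\pi)+\height(\pi')-1$, which is finite because each summand is computed explicitly and is at most $4$. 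Your ``compare the two expressions using symmetry'' is not a proof of equality, and your first finiteness argument---that a supersingular K3 admits no such quotient presentation because its formal Brauer group is $\hatGa$---is circular: supersingular K3 surfaces do admit $\mu_p$- and $\alpha_p$-actions (Corollary \ref{cor:criterion} classifies their quotients as Enriques or rational); what must be \emph{proved} is that the quotient is then not an RDP K3 surface.

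Second, the bookkeeping you flag as the main obstacle---decomposing the height data into a ``smooth part'' plus local contributions at all singular points and matching them on both sides---is not how the argument works and would be hard to carry out. The point of Proposition \ref{prop:local to height:bis} is that a \emph{single} singular point determines $\height(\pi)$ exactly: the comparison map $\gamma_{\fm_y,1}\colon\localcohtorsion{y}{\cO_{Y,y}}{\fm_y}\to H^2(Y,\cO_Y)$ is an isomorphism of one-dimensional spaces, so the statement $\pi^*(e)=V^{n-1}(e')$ with $e'$ a generator, verified at one point in Proposition \ref{prop:local mu_p alpha_p}, pins down $\height(\pi)=n$ globally. There is no smooth part to control and no summation over singular points. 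What you do need, and do not mention, is the reduction to the \emph{maximal} case (Definition \ref{def:maximal}), which guarantees that $\pi^{-1}(y)$ is a smooth point so that the local model of Proposition \ref{prop:local mu_p alpha_p} applies, together with the classification of the possible $\Sing(Y)$ from \cite{Matsumoto:k3alphap}; this is how $h$ gets expressed in terms of $\Sing(Y)$ and $\Sing(X)$.
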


As a consequence, 
we prove (Corollary \ref{cor:criterion}) that $G$-quotient of an RDP K3 surface $X$ in characteristic $p$, with $G = \mu_p$ or $G = \alpha_p$,
is an RDP K3 surface if and only if $X$ is of finite height.

\subsection{Organization of the paper}

In Section \ref{sec:Witt} we recall the definition and basic properties of the rings $\Witt[n]{A}$ of truncated Witt vectors. 
In Section \ref{sec:Witt local cohomology} we introduce morphisms between $W_n$-valued local cohomology groups and interpret them in terms of Cech cohomology groups.
In Section \ref{sec:RDP} we carry out explicit computations for inseparable morphisms between RDPs. 
In Section \ref{sec:height} we recall the definition and basic properties of the height of K3 surfaces.
The main results, connecting the height of K3 surfaces
and the maps on $W_n$-valued local cohomology groups, will be proved in Section \ref{sec:height of morphisms}.
In Sections \ref{sec:realizable} and \ref{sec:examples} 
we discuss which RDPs are realizable on RDP K3 surfaces,
and give examples for all possible non-taut RDPs.

\section{Rings of truncated ($p$-typical) Witt vectors} \label{sec:Witt}

We recall the definition and basic properties of the rings $\Witt[n]{A}$ of truncated $p$-typical Witt vectors. 

Let $p$ be a prime and $A$ an $\bF_p$-algebra. 
The ring $W(A)$ of \emph{$p$-typical Witt vectors} on $A$ is the set $A^{\bN}$
equipped with the ring structure satisfying,
for each polynomial $P \in \bZ[x,y]$,
\[
P( (a_0, a_1, \dots), (b_0, b_1, \dots) )
= ( P_0(a_0, b_0), P_1(a_0, b_0, a_1, b_1), \dots),
\]
where $P_i \in \bZ[x_0, \dots, x_{i-1}, y_0, \dots, y_{i-1}]$
is the unique collection of polynomials 
satisfying, for each $N \in \bN$,
\[
w_N(P_0(\dots), P_1(\dots), \dots, P_{N}(\dots))
= P(w_N(a_0, a_1, \dots, a_N), w_N(b_0, b_1, \dots, b_N)),
\]
where $w_N(t_0, t_1, \dots, t_N) := \sum_{i=0}^{N} p^i t_i^{p^{N-i}}$
is the so-called $N$-th ghost component.

For example, we clearly have $(a_0) + (b_0) = (a_0 + b_0)$ and $(a_0) \cdot (b_0) = (a_0 b_0)$ on $\Witt[1]{A} \cong A$,
and it follows from the equalities 
\begin{align*}
(a_0^p + p a_1) + (b_0^p + p b_1) 
&= (a_0 + b_0)^p + p (a_1 + b_1 - \frac{(a_0 + b_0)^p - a_0^p - b_0^p}{p}), \\
(a_0^p + p a_1) \cdot (b_0^p + p b_1) 
&= (a_0 b_0)^p + p (a_1 b_0^p + a_0^p b_1 + p a_1 b_1) 
\end{align*}
that
\begin{align*}
(a_0, a_1) + (b_0, b_1) &= (a_0 + b_0, a_1 + b_1 - \frac{(a_0 + b_0)^p - a_0^p - b_0^p}{p}), \\
(a_0, a_1) \cdot (b_0, b_1) &= (a_0 b_0, a_1 b_0^p + a_0^p b_1 + p a_1 b_1)
\end{align*}
on $\Witt[2]{A}$.

It follows from \cite{Hazewinkel:formal groups}*{17.1.18} that, for $P(x,y) = x + y$,
the $i$-th component $P_i$ of $P((a_0, a_1, \dots), (b_0, b_1, \dots))$ 
is a homogeneous polynomial of $a_0, a_1, \dots, b_0, b_1, \dots$ of degree $p^i$ 
if we declare $a_i$ and $b_i$ to be homogeneous of degree $p^i$.

$W$ is functorial: any homomorphism $\map{f}{A}{B}$ of $\bF_p$-algebras 
induces a morphism $\map{f}{W(A)}{W(B)}$ of rings by $f(a_0, a_1, \dots) = (f(a_0), f(a_1), \dots)$
that is compatible with $V$ and $R$ defined below.
An example is the \emph{Frobenius} morphism $\map{F}{W(A)}{W(A)}$ defined as 
$F(a_0, a_1, \dots) = (a_0^p, a_1^p, \dots)$.

The shift morphism, or \emph{Verschiebung}, $V$ on $W(A)$ is defined as 
$V(a_0, a_1, \dots) = (0, a_0, a_1, \dots)$.

The ring of \emph{Witt vectors of length $n$}
is the quotient $\Witt[n]{A} = W(A) / V^n W(A)$,
hence in $\Witt[n]{A}$ only the first $n$ components $(a_0, a_1, \dots, a_{n-1})$ are considered.
The Verschiebung induces $\map{V}{\Witt[n]{A}}{\Witt[n+1]{A}}$.
The restriction morphism $\map{R}{\Witt[n]{A}}{\Witt[n-1]{A}}$ is defined as 
$R(a_0, \dots, a_{n-1}) = (a_0, \dots, a_{n-2})$ 
and is a ring homomorphism.
We have an exact sequence
\[
0 \to \Witt[n']{A} \namedto{V^{n-n'}} \Witt[n]{A} \namedto{R^{n'}} \Witt[n-n']{A} \to 0
\]
for each $0 \leq n' \leq n$.

We use the following equalities in Section \ref{sec:RDP}. 

\begin{lem} \label{lem:projection formula}
If $x \in \Witt[n]{A}$ and $y \in \Witt[n+m]{A}$,
then $V^m(x) \cdot y = V^m(x \cdot F^m(R^m(y)))$.
\end{lem}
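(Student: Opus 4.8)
The plan is to reduce the claimed identity
\[
V^m(x) \cdot y = V^m\bigl(x \cdot F^m(R^m(y))\bigr)
\]
to the single-step case $m = 1$ and then verify that case directly on ghost components. Indeed, writing $V^m = V \circ V^{m-1}$ and $F^m = F \circ F^{m-1}$, $R^m = R^{m-1} \circ R$, one sees that the general case follows from the $m=1$ identity by an easy induction, provided one is careful about the lengths: if $x \in \Witt[n]{A}$ and $y \in \Witt[n+m]{A}$, then $V(x) \in \Witt[n+1]{A}$, $F(R(y)) \in \Witt[n+m-1]{A}$, and $x \cdot F^{m-1}(R^{m-1}(\cdot))$ lives in the right truncation so that applying the case $m=1$ to the pair $(V^{m-1}(x), y)$ and then peeling off one more $V$ works. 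So the core is: for $x \in \Witt[n]{A}$ and $y \in \Witt[n+1]{A}$,
\[
V(x) \cdot y = V\bigl(x \cdot F(R(y))\bigr).
\]

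To prove this $m=1$ case, I would first reduce to $A$ a $\bZ$-flat (better, $p$-torsion-free) ring by the standard trick: the formulas defining addition, multiplication, $V$, $F$, $R$ on Witt vectors are given by universal polynomials with $\bZ$-coefficients, so it suffices to check the identity in $W(\bZ[x_0,\dots,x_{n-1},y_0,\dots,y_n])$, whose ghost map $w \colon W(R) \to R^{\bN}$ is injective when $R$ is $p$-torsion-free. (Here one must be slightly careful that we are working with truncated Witt vectors over an $\bF_p$-algebra; the clean route is to establish the identity in $W(R)$ for $p$-torsion-free $R$, deduce it over $W(\bZ[\underline{x},\underline{y}])$, then specialize to an arbitrary $\bF_p$-algebra via functoriality and pass to the truncation $\Witt[\bullet]{A} = W(A)/V^\bullet W(A)$, noting $V$, $F$, $R$ and multiplication all descend compatibly.) Over such $R$, it then suffices to compare $N$-th ghost components for all $N$. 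Recall $w_N(V a) = p\, w_{N-1}(a)$ (with $w_{-1} = 0$), $w_N(F a) = w_{N+1}(a)$ — wait, more precisely $w_N(Fa)$ relates to $w_{N+1}(a)$ on $\bZ$-flat rings — and $w_N(Ra) = w_N(a)$ since $R$ just forgets the top coordinate (which does not enter $w_N$ for $N \le n-1$). Thus
\[
w_N\bigl(V(x)\cdot y\bigr) = w_N(Vx)\, w_N(y) = p\, w_{N-1}(x)\, w_N(y),
\]
while
\[
w_N\bigl(V(x \cdot F(R(y)))\bigr) = p\, w_{N-1}\bigl(x \cdot F(R(y))\bigr) = p\, w_{N-1}(x)\, w_{N-1}(F(R(y))) = p\, w_{N-1}(x)\, w_N(R(y)) = p\, w_{N-1}(x)\, w_N(y),
\]
using $w_{N-1}(Fz) = w_N(z)$ and $w_N(Ry) = w_N(y)$. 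The two sides agree for every $N$, so injectivity of $w$ over $p$-torsion-free $R$ finishes the $m=1$ case, and induction gives the general $m$.

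The only genuinely delicate point — the main obstacle, such as it is — is the bookkeeping of truncation lengths and the identity $w_{N-1}(Fz) = w_N(z)$: the Frobenius $F$ on truncated Witt vectors drops the length by one, and the ghost-component formula $w_{N-1}(F z) = w_N(z)$ is the identity that makes $F$ "raise" the ghost index, which is exactly what is needed for the two sides above to match. One must check that all the terms appearing ($V^m(x)$, $F^m(R^m(y))$, the products) lie in the stated truncations $\Witt[n+m]{A}$ resp.\ $\Witt[n]{A}$ so that the equation even makes sense, and that the reduction to $p$-torsion-free rings is legitimate for truncated rather than full Witt vectors — both are routine but worth stating carefully. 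Everything else is a mechanical ghost-component computation.
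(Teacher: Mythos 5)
Your proof is correct. The paper states this lemma without any proof at all (it is the standard projection formula for Witt vectors, quoted as a known fact), so there is no argument in the paper to compare against; your route --- reduce to $m=1$ by induction using the commutation of $F$ and $R$, then verify the $m=1$ identity on ghost components over a $p$-torsion-free polynomial ring and descend to arbitrary $\bF_p$-algebras and to the truncations by universality --- is exactly the standard textbook proof, and your handling of the one real subtlety (that the paper's coordinatewise $F$ composed with $R$ is the classical length-dropping Frobenius, which is what satisfies $w_{N-1}(F(R(y))) = w_N(y)$) is sound.
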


\begin{lem} \label{lem:subtraction}
	Let $A$ be an $\bF_p$-algebra.
	\begin{enumerate}
	\item \label{item:subtraction:2:n}
	In $\Witt[n]{A}$ with $p = 2$,
	write $(t_1+t_2, 0, 0, \dots) - (t_2, 0, 0, \dots) = (S_0(t_1,t_2), S_1(t_1,t_2), \dots)$
	with polynomials $S_i \in \bF_2[t_1,t_2]$.
	Then $S_i$ is homogeneous of degree $2^i$
	and we have $S_i \equiv t_1 t_2^{2^i-1} \pmod{t_1^2}$.
	\item \label{item:subtraction:2:3}
	In $\Witt[3]{A}$ with $p = 2$, we have
\[
\begin{pmatrix} a + b + c \\ 0 \\ 0 \end{pmatrix} - 
\begin{pmatrix} b \\ b c \\ 0 \end{pmatrix} 
= \begin{pmatrix} a + c \\ a b \\
(a + c)^3 b + (a + c) b^3 + (a^2 + 3 a c + c^2) b^2 \end{pmatrix}.
\]
	\item \label{item:subtraction:2:4}
	In $\Witt[4]{A}$ with $p = 2$, we have
	\[
	\begin{pmatrix} a + b \\ 0 \\ 0 \\ 0 \end{pmatrix} - 
	\begin{pmatrix} a     \\ 0 \\ 0 \\ 0 \end{pmatrix} - 
	\begin{pmatrix}     b \\ 0 \\ 0 \\ 0 \end{pmatrix} =
	\begin{pmatrix} 0 \\ a b \\ a b (a^2 + a b + b^2) \\ a b (a^6 + a^5 b + a^3 b^3 + a b^5 + b^6) \end{pmatrix}
	\]
	and
	\[
	(c_0, c_1, c_2 + d_2, c_3) - (0, 0, d_2, 0) = (c_0, c_1, c_2, c_3 + c_2 d_2).
	\]
	\item \label{item:subtraction:3:2}
	In $\Witt[2]{A}$ with $p = 3$, we have
	\[
		(a + b, 0) - (a, 0) - (b, 0) = (0, a b (a + b)).
	\]
	\item \label{item:subtraction:5:2}
	In $\Witt[2]{A}$ with $p = 5$, we have
	\[
	(a + b, 0) - (a, 0) - (b, 0) = (0, a b (a + b) (a^2 + a b + b^2)).
	\]
	\end{enumerate}
\end{lem}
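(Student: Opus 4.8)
The plan is to reduce each displayed identity to an assertion about the universal addition and subtraction polynomials of $W_n$, and then to verify it by passing to ghost components over a $p$-torsion-free lift. Write the ambient $\bF_p$-polynomial ring as $R = \tilde R/p\tilde R$, where $\tilde R = \bZ[\text{the relevant variables}]$; then the reduction map $W_n(\tilde R) \to W_n(R)$ is surjective and the ghost map $w\colon W_n(\tilde R) \to \tilde R^{\,n}$ is an injective ring homomorphism, since $\tilde R$ has no $p$-torsion. To prove an identity of the form ``$x - y = z$'' in $W_n(R)$ I would lift $x$ and $y$ to the evident elements of $W_n(\tilde R)$, form $x - y$ there, compute its ghost components (using that the ghost of $(c,0,\dots,0)$ is $(c^{p^N})_{N}$ and that $w$ is additive), solve the resulting triangular ghost system for the Witt components of $x - y$ over $\tilde R$ — the divisibilities by powers of $p$ are automatic since $x - y \in W_n(\tilde R)$ — and finally reduce modulo $p$ and compare with $z$; equivalently, and with less bookkeeping, one lifts $z$ to the evident integral polynomial and checks the reassembled identity $y + z = x$ on ghost components. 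This settles parts \eqref{item:subtraction:2:3}, \eqref{item:subtraction:2:4}, \eqref{item:subtraction:3:2}, \eqref{item:subtraction:5:2} by a finite computation. For the two identities involving a Witt vector that is not of the simple shape $(c,0,\dots,0)$ I would first simplify using $(a_0,\dots,a_{n-1}) = (a_0,\dots,a_{k-1},0,\dots,0) + V^{k}(a_{k},\dots,a_{n-1})$ and Lemma~\ref{lem:projection formula}: the identity with $(0,0,d_2,0) = V^{2}(d_2,0)$ in part \eqref{item:subtraction:2:4} then reduces to $(c_2+d_2,c_3) - (d_2,0) = (c_2, c_3 + c_2 d_2)$ in $W_2$, and the one with $(b,bc,0) = (b,0,0) + V(bc,0)$ in part \eqref{item:subtraction:2:3} unwinds similarly.

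For the homogeneity assertion in part \eqref{item:subtraction:2:n}, I would use that multiplication by the Teichm\"uller representative $[\lambda] = (\lambda,0,\dots,0)$ sends $(a_0,\dots,a_{n-1})$ to $(\lambda a_0, \lambda^{p} a_1, \dots, \lambda^{p^{n-1}} a_{n-1})$ (immediate from the ghost description over a torsion-free base) and is additive, being multiplication by a ring element. On the special elements $(c,0,\dots,0)$ with $c = t_1+t_2$ or $c = t_2$, this map agrees with the ring endomorphism of $W_n(\bF_2[t_1,t_2,\lambda])$ induced by the substitution $t_j \mapsto \lambda t_j$ defining the grading; applying both (additive) maps to $(t_1+t_2,0,\dots) - (t_2,0,\dots) = (S_0, S_1, \dots)$ yields $S_i(\lambda t_1, \lambda t_2) = \lambda^{2^i} S_i(t_1,t_2)$, i.e.\ $S_i$ is homogeneous of degree $2^i$.

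For the congruence $S_i \equiv t_1 t_2^{2^i - 1} \pmod{t_1^2}$ I would redo the ghost computation over $\tilde R = \bZ[t_1,t_2]/(t_1^2)$, which is still $p$-torsion-free. There the ghost components of $(t_1+t_2,0,\dots) - (t_2,0,\dots)$ are $(t_1+t_2)^{2^N} - t_2^{2^N} \equiv 2^{N} t_1 t_2^{2^N-1} \pmod{t_1^2}$; since $S_i(0,t_2) = 0$ one may write $S_i \equiv t_1 \bar S_i(t_2) \pmod{t_1^2}$, and substituting this into $w_N = \sum_{i \le N} 2^i S_i^{\,2^{N-i}}$ annihilates every term with $i < N$ (each carries a factor $t_1^{2^{N-i}} = 0$), leaving $w_N = 2^N t_1 \bar S_N(t_2)$, whence $\bar S_N(t_2) = t_2^{2^N - 1}$ and the claim follows after reducing mod $2$. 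The only steps requiring care are setting up the Teichm\"uller--grading compatibility and, above all, organizing the polynomial verifications in parts \eqref{item:subtraction:2:3}--\eqref{item:subtraction:2:4}, which run up to degree $7$; I expect the sheer bulk of those computations, rather than any conceptual point, to be the main obstacle.
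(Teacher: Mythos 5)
Your proposal is correct: the paper's own proof is just ``Straightforward,'' and the ghost-component verification over a $p$-torsion-free lift (together with the Teichm\"uller-scaling argument for homogeneity and the computation over $\bZ[t_1,t_2]/(t_1^2)$ for the congruence in part (\ref{item:subtraction:2:n})) is exactly the kind of routine universal-polynomial check the authors intend, consistent with how they derive the $W_2$ addition formulas in Section \ref{sec:Witt}. I spot-checked the decompositions $(x_0,\dots,x_{n-1})=\sum_i V^i([x_i])$ and the resulting reductions for parts (\ref{item:subtraction:2:3}) and (\ref{item:subtraction:2:4}), and they go through as you describe.
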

\begin{proof}
	Straightforward.
\end{proof}

The closed immersion $\map{R^*}{\Spec \Witt[n-1]{A}}{\Spec \Witt[n]{A}}$
is a homeomorphism if $n \geq 2$.
For an $\bF_p$-scheme $Z$ and $n \geq 1$, we define $\Witt[n]{Z}$ to be the scheme
whose underlying topological space is $Z$ 
and whose structure sheaf is $\Witt[n]{\cO_Z}$.

\begin{lem} \label{lem:projective}
If $Z$ is a scheme projective (resp.\ quasi-projective) over an algebraically closed field $k$,
then $\Witt[n]{Z}$ is projective (resp.\ quasi-projective) over $\Witt[n]{k}$.
\end{lem}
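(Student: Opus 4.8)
The plan is to show that $\Witt[n]{Z} \to \Spec \Witt[n]{k}$ is of finite type, proper, and carries an ample invertible sheaf, and then to deduce the quasi-projective case from the projective one. For finite type: since $\Witt[n]{\Spec A} = \Spec \Witt[n]{A}$ and $Z$ is quasi-compact, it suffices to show that $\Witt[n]{A}$ is a finitely generated $\Witt[n]{k}$-algebra whenever $A$ is a finitely generated $k$-algebra. As $\Witt[n]{}$ carries surjections of $\bF_p$-algebras to surjections (lift the Witt components one by one), we may assume $A = k[x_1,\dots,x_m]$, and I would induct on $n$ (the case $n=1$ being trivial). The restriction $R\colon \Witt[n]{A} \to \Witt[n-1]{A}$ is surjective with square-zero kernel $\mathfrak a = V^{n-1}(A)$; since $V^{n-1}$ is additive, $k$ is perfect, and by the projection formula (Lemma~\ref{lem:projection formula}) we have $[c^{p^{-(n-1)}}]\cdot V^{n-1}(x^\alpha) = V^{n-1}(c\,x^\alpha)$ and $[x_i]\cdot V^{n-1}(x^\alpha) = V^{n-1}(x^{\alpha + p^{n-1}e_i})$, the ideal $\mathfrak a$ is generated over $\Witt[n]{A}$ by the finitely many elements $V^{n-1}(x^\alpha)$ with $0 \le \alpha_i < p^{n-1}$. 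Adjoining these to lifts of a finite generating set of $\Witt[n-1]{A}$ (over $\Witt[n-1]{k} = \Witt[n]{k}/V^{n-1}(k)$) yields a finite generating set of $\Witt[n]{A}$ over $\Witt[n]{k}$.

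For properness: the maps $R^*\colon \Witt[n-1]{Z} \to \Witt[n]{Z}$ are homeomorphisms, so $Z = \Witt[1]{Z} \hookrightarrow \Witt[n]{Z}$ is a closed immersion that is a homeomorphism, with ideal $V\Witt[n-1]{\cO_Z}$ satisfying $(V\Witt[n-1]{\cO_Z})^{n}=0$; thus $\Witt[n]{Z}$ is a nilpotent thickening of $Z$, and likewise $\Spec\Witt[n]{k}$ of $\Spec k$, compatibly with the structure morphisms. Since $Z \to \Spec k$ is proper, so is $Z_{\red} \to \Spec k$, and as properness is insensitive to nilpotent thickenings, the finite-type morphism $\Witt[n]{Z} \to \Spec\Witt[n]{k}$ is proper.

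The main point is producing an ample invertible sheaf. Fix an ample invertible sheaf $\cL$ on $Z$, and factor the thickening $Z \hookrightarrow \Witt[n]{Z}$ as a tower of square-zero extensions $\Witt[m]{Z} \hookrightarrow \Witt[m+1]{Z}$ ($1 \le m \le n-1$) with ideal $\cI_m = V^{m}\cO_Z$, a quasi-coherent $\cO_{\Witt[m]{Z}}$-module annihilated by $p$ (indeed $p\cdot V^{m}(a) = V^{m}(pa) = 0$, as $\cO_Z$ is an $\bF_p$-algebra). The obstruction to extending an invertible sheaf across such a step is the image under the connecting homomorphism of the exact sequence $1 \to \cI_m \to \cO_{\Witt[m+1]{Z}}^{\times} \to \cO_{\Witt[m]{Z}}^{\times} \to 1$, hence lies in $H^2(\Witt[m]{Z},\cI_m)$, a group annihilated by $p$; since this connecting map is a group homomorphism, the $p$-th power of any invertible sheaf extends. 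Starting from $\cL$ and iterating $n-1$ times, I obtain an invertible sheaf $\widetilde{\cL}$ on $\Witt[n]{Z}$ with $\widetilde{\cL}|_Z \cong \cL^{\otimes p^{n-1}}$. Its restriction to $\Witt[n]{Z}_{\red} = Z_{\red}$ is ample, so $\widetilde{\cL}$ is ample (ampleness being insensitive to nilpotent thickenings); a proper scheme with an ample invertible sheaf over the noetherian ring $\Witt[n]{k}$ is projective, so $\Witt[n]{Z}$ is projective over $\Witt[n]{k}$. Finally, if $Z$ is only quasi-projective, embed it as an open subscheme of a projective $k$-scheme $\overline{Z}$; then $\Witt[n]{Z}$ is an open subscheme of $\Witt[n]{\overline{Z}}$, which is projective over $\Witt[n]{k}$ by the above, so $\Witt[n]{Z}$ is quasi-projective over $\Witt[n]{k}$.

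I expect the third step to be the main obstacle: an ample sheaf on $Z$ need not itself deform to $\Witt[n]{Z}$ (the obstruction group $H^2(Z,\cI_m)$ can be nonzero, for instance when $Z$ is a surface), and the key observation is that, everything being in characteristic $p$, this obstruction group is $p$-torsion, so a suitable power of $\cL$ does deform. The remaining ingredients — that $\Witt[n]{}$ preserves surjections, the explicit generators in the finite-type step, and the insensitivity of properness and of ampleness to nilpotent thickenings — are standard.
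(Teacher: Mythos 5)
Your proof is correct, but it takes a genuinely different route from the paper's. The paper reduces at once to the case $Z = \bP^N$ (using that $\Witt[n]{\functorspace}$ preserves open and closed immersions) and then shows directly that $\Witt[n]{\bP^N}$ is projective by proving that $\Witt[n]{k[x_0,\dots,x_N]}$ is a finitely generated $\Witt[n]{k}$-algebra, generated by the Teichm\"uller lifts $[x_i]$ together with the elements $V^j(x_0^{i_0}\cdots x_N^{i_N})$ with $0\le i_k<p^j$ --- essentially the same generators as in your finite-type step, but exploited there to realize $\Witt[n]{\bP^N}$ as a $\Proj$, so that the ample sheaf comes for free from the construction. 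You instead verify finite type, properness, and the existence of an ample invertible sheaf separately, producing the ample sheaf by lifting $\cL^{\otimes p^{n-1}}$ across the square-zero extensions $\Witt[m]{Z}\subset\Witt[m+1]{Z}$ and using that the obstruction groups $H^2(Z, V^m\cO_Z)$ are killed by $p$. Your route is longer and invokes more machinery (deformation of line bundles, insensitivity of properness and ampleness to nilpotents, and ``proper plus ample over a Noetherian ring implies projective''), but it is intrinsic in the projective case and makes explicit the one genuinely nontrivial point --- an ample sheaf need not itself deform, while a $p$-th power of it always does --- which the $\Proj$ construction conceals; it would also apply to more general nilpotent thickenings by $p$-torsion ideals. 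Both arguments rest on the same background facts about $W_n$ (compatibility with localization, preservation of surjections and of open/closed immersions).
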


\begin{proof}
Since $\Witt[n]{\functorspace}$ preserves open immersions and closed immersions,
it suffices to show that $\Witt[n]{\bP^N}$ is projective.
We will show that $\Witt[n]{k[x_0, \dots, x_N]}$ is a finitely generated $\Witt[n]{k}$-algebra.
Indeed, it is generated by the elements $(x_i, 0, \dots, 0)$ with $0 \leq i \leq N$
and the elements $V^j(x_0^{i_{0}} \dots x_N^{i_{N}}, 0, \dots, 0)$
with $0 < j < n$ and $0 \leq i_{k} < p^j$.
\end{proof}

\begin{lem} \label{lem:CM}
Suppose $(A, \idealm_A)$ is a Cohen--Macaulay local ring.
Then $\Witt[n]{A}$ is also Cohen--Macaulay.
More precisely, the Teichm\"uller lift of a regular sequence of $A$ is a regular sequence of $\Witt[n]{A}$.

In particular, $\Ext^i_{\Witt[n]{A}}(M', \Witt[n]{A}) = 0$
if $\Supp M' \subset \set{\idealm_A}$ and $i < \dim A$.
\end{lem}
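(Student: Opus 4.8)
The plan is to reduce everything to the classical fact that a regular sequence in a Cohen--Macaulay local ring remains regular modulo powers and that quotients by a regular sequence are again Cohen--Macaulay, by exploiting the filtration of $\Witt[n]{A}$ by the images of the Verschiebung. Concretely, let $t_1, \dots, t_d$ be a system of parameters of $A$ (so $d = \dim A$) which, by the Cohen--Macaulay hypothesis, is an $A$-regular sequence, and let $\tau_i = (t_i, 0, \dots, 0) \in \Witt[n]{A}$ be the Teichm\"uller lifts. I claim $\tau_1, \dots, \tau_d$ is a $\Witt[n]{A}$-regular sequence. The key structural input is the exact sequence
\[
0 \to \Witt[1]{A} \namedto{V^{n-1}} \Witt[n]{A} \namedto{R} \Witt[n-1]{A} \to 0
\]
together with the observation that $V^{n-1}$ is $\Witt[n]{A}$-linear when the target $\Witt[1]{A} \cong A$ is given the module structure coming from restriction and Frobenius; more precisely, by the projection formula (Lemma \ref{lem:projection formula}), for $x \in A = \Witt[1]{A}$ and $y \in \Witt[n]{A}$ one has $V^{n-1}(x) \cdot y = V^{n-1}(x \cdot F^{n-1}(R^{n-1}(y)))$, so multiplication by $\tau_i$ on the submodule $V^{n-1}(A)$ corresponds, under $V^{n-1}$, to multiplication by $t_i^{p^{n-1}}$ on $A$. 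Since $t_i^{p^{n-1}}$ together with the rest of the $t_j^{p^{n-1}}$ (and in fact in any order) is still an $A$-regular sequence — powers of a regular sequence in a CM local ring form a regular sequence — the graded pieces of the Verschiebung filtration on $\Witt[n]{A}$ are $A$-modules on which $\tau_1, \dots, \tau_d$ acts through honest regular sequences of $A$.

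From here I would argue by induction on $n$. The case $n = 1$ is the hypothesis. For the inductive step, consider the short exact sequence above, $0 \to A \namedto{V^{n-1}} \Witt[n]{A} \namedto{R} \Witt[n-1]{A} \to 0$, as a sequence of $\Witt[n]{A}$-modules (where $\Witt[n]{A}$ acts on $A$ via $F^{n-1}\circ R^{n-1}$ and on $\Witt[n-1]{A}$ via $R$). A standard snake-lemma / long-exact-sequence-in-Koszul-homology argument shows that if $\tau_1, \dots, \tau_d$ is a regular sequence on both the sub and the quotient, then it is a regular sequence on the middle term and the middle term is a quotient-compatible extension; one also checks $(\tau_1, \dots, \tau_d)\Witt[n]{A} \ne \Witt[n]{A}$ since $\Witt[n]{A}$ is local with maximal ideal the preimage of $\fm_A$. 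On the sub $A$, the sequence $\tau_i$ acts as $t_i^{p^{n-1}}$, which is $A$-regular as noted; on the quotient $\Witt[n-1]{A}$, the sequence $\tau_i$ restricts to the Teichm\"uller lifts $(t_i, 0, \dots, 0) \in \Witt[n-1]{A}$, which is regular by the inductive hypothesis. This yields that $\tau_1, \dots, \tau_d$ is $\Witt[n]{A}$-regular; since $\dim \Witt[n]{A} = \dim A = d$ (the underlying spaces agree and $R^*$ is a homeomorphism, as recalled before Lemma \ref{lem:projective}), a local ring of dimension $d$ with a regular sequence of length $d$ is Cohen--Macaulay, giving the main assertion.

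For the final "in particular" clause: if $M'$ is a finitely generated $\Witt[n]{A}$-module with $\Supp M' \subseteq \{\fm_A\}$ (i.e. $M'$ is supported at the closed point), then $M'$ has depth $0$, while $\Witt[n]{A}$ is Cohen--Macaulay of dimension $d$; the vanishing $\Ext^i_{\Witt[n]{A}}(M', \Witt[n]{A}) = 0$ for $i < d$ is then exactly the statement that $\Ext^i$ against a Cohen--Macaulay (indeed, as $\Witt[n]{A}$ is a quotient of a regular ring — or by Lemma \ref{lem:projective}-type arguments, catenary and CM — one can invoke local duality) module vanishes below the codimension of the support. Concretely I would either cite local duality / the characterization of depth via $\Ext$ (grade of the annihilator), using $\operatorname{grade}(\fm_A, \Witt[n]{A}) = \operatorname{depth}\Witt[n]{A} = d$, so $\Ext^i_{\Witt[n]{A}}(M', \Witt[n]{A}) = 0$ for $i < d$ because any module supported at $\fm_A$ is a direct limit of modules annihilated by powers of $\fm_A$.

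The main obstacle I anticipate is making the module-structure bookkeeping in the Verschiebung exact sequence precise: one must be careful that $V^{n-1} \colon A \to \Witt[n]{A}$ is a map of $\Witt[n]{A}$-modules only after twisting the source by $F^{n-1} \circ R^{n-1}$, and correspondingly that "$\tau_i$ acts as $t_i^{p^{n-1}}$" is the statement extracted from the projection formula; getting the Frobenius twists and the exponents $p^{n-1}$ exactly right, and confirming that twisting by a (finite, flat-ish) ring endomorphism preserves regularity of the relevant sequence, is the delicate point. Everything else is a routine induction on $n$ combined with standard commutative algebra (regular sequences under short exact sequences, and depth $=$ grade via $\Ext$).
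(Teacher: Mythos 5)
Your proof is correct, but it is a genuinely different route from the paper's. The paper disposes of the main assertion in one line by citing Borger (\emph{The basic geometry of Witt vectors II}, Propositions 16.18--16.19), and only remarks parenthetically that one can show more generally that $b_1,\dots,b_N \in \Witt[n]{A}$ is a regular sequence whenever $R^{n-1}(b_1),\dots,R^{n-1}(b_N)$ is one in $A$; the ``in particular'' clause is likewise dismissed as a standard consequence of Cohen--Macaulayness. You instead give a self-contained induction on $n$ via the Verschiebung filtration $0 \to \Witt[1]{A} \namedto{V^{n-1}} \Witt[n]{A} \namedto{R} \Witt[n-1]{A} \to 0$, using the projection formula (Lemma \ref{lem:projection formula}) to identify the action of $[t_i]$ on the sub $V^{n-1}(A)$ with multiplication by $t_i^{p^{n-1}}$, and the standard facts that powers of a regular sequence are regular and that regularity passes from sub and quotient to the middle of a short exact sequence. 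This is exactly the argument the author is alluding to in the parenthetical, and indeed your induction proves that stronger statement verbatim (for general $b_i$ the sub piece is acted on by $R^{n-1}(b_i)^{p^{n-1}}$), so your version buys self-containedness and the sharper claim at the cost of length. Two small points of hygiene: the ``twist by a ring endomorphism'' worry you flag is a non-issue, since regularity of a sequence on a module depends only on the multiplication endomorphisms it induces, which you have already computed to be $t_i^{p^{n-1}}$ (no flatness of Frobenius is needed); and in the final clause the direct-limit remark is unnecessary for the paper's application, where $M'$ is of finite length, so the grade characterization $\inf\{i : \Ext^i(M',N) \neq 0\} = \operatorname{depth}(\Ann M', N) = \operatorname{depth} \Witt[n]{A} = d$ applies directly.
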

\begin{proof}
The former assertion follows from \cite{Borger:WittII}*{Propositions 16.18, 16.19}.
(One can also show more generally that $b_1, \dots, b_N$ in $\Witt[n]{A}$
is a regular sequence 
if $R^{n-1}(b_1), \dots, R^{n-1}(b_N)$ is a regular sequence in $A$.)

The latter assertion is a consequence of being Cohen--Macaulay.
\end{proof}
Also note the equality $\Witt[n]{A[\frac{1}{x}]} = \Witt[n]{A}[\frac{1}{[x]}]$, where $[x]$ is the Teichm\"uller lift.

\section{$W_n$-valued local cohomology groups} \label{sec:Witt local cohomology}

Suppose $(A, \idealm_A)$ is a Noetherian Cohen--Macaulay local $k$-algebra of dimension $d$.
By \cite{Hartshorne:local cohomology}*{Theorems 2.8 and 3.8}, 
we have isomorphisms 
 $\varinjlim_{I} \Ext^i_{A}(A/I, A) \cong \localcoh[i]{\idealm_A}{A}$,
where the limit is taken over $\idealm_A$-primary ideals (i.e.\ $\Supp(A/I) \subset \set{\idealm_A}$), 
and we have $\localcoh[i]{\idealm_A}{A} = 0$ for $i < d$ and
\[ \localcoh[d]{\idealm_A}{A} \cong 
\Coker \Bigl( \bigoplus_{i = 1}^d A[(x_1 \dots \hat{x_i} \dots x_d)^{-1}] \to A[(x_1 \dots x_d)^{-1}] \Bigr) \]
for any regular sequence $x_1, \dots, x_d$ in $\idealm_A$.

\begin{lem} \label{lem:bis:interpretation of Ext}
Let $(A,\idealm_A)$ be as above. 
Let $n \geq 1$ be an integer and
$I \subset A$ an $\idealm_A$-primary ideal.
Let $J := (R^{n-1})^{-1}(I) \subset \Witt[n]{A}$,
so that $\Witt[n]{A}/J \cong A/I$.
Then the morphism \[
\map{h = h_I}
{\Ext^d_{\Witt[n]{A}}(A/I, \Witt[n]{A})}
{\localcoh[d]{\idealm_{\Witt[n]{A}}}{\Witt[n]{A}}}
\]
is injective, and its image is precisely the submodule $\localcohtorsion[d]{\idealm_{\Witt[n]{A}}}{\Witt[n]{A}}{J}$ of the classes annihilated by $J$.
Moreover this morphism is compatible with the inclusion of $\idealm_A$-primary ideals $I' \subset I$ and with $V$.
\end{lem}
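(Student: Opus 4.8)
The plan is to reduce the $W_n$-statement to the already-recalled classical ($n=1$) description of top local cohomology and to track how the isomorphisms interact with $V$ and with ideal inclusions. First I would fix a regular sequence $x_1, \dots, x_d \in \idealm_A$; by Lemma \ref{lem:CM} the Teichm\"uller lifts $[x_1], \dots, [x_d]$ form a regular sequence in $\Witt[n]{A}$, and $\idealm_{\Witt[n]{A}} = \sqrt{([x_1], \dots, [x_d])}$, so the recalled formula gives
\[
\localcoh[d]{\idealm_{\Witt[n]{A}}}{\Witt[n]{A}} \cong \Coker\Bigl( \bigoplus_{i=1}^d \Witt[n]{A}[([x_1]\cdots\widehat{[x_i]}\cdots[x_d])^{-1}] \to \Witt[n]{A}[([x_1]\cdots[x_d])^{-1}] \Bigr),
\]
using the equality $\Witt[n]{A[1/x]} = \Witt[n]{A}[1/[x]]$ noted after Lemma \ref{lem:CM}. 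On the Ext side, because $\Witt[n]{A}$ is Cohen--Macaulay of dimension $d$ (again Lemma \ref{lem:CM}) and $\Supp(\Witt[n]{A}/J) \subset \set{\idealm_{\Witt[n]{A}}}$, the Koszul complex on $[x_1], \dots, [x_d]$ computes $\Ext^\bullet_{\Witt[n]{A}}(\Witt[n]{A}/J, \Witt[n]{A})$ with vanishing below degree $d$; this is exactly the input needed to apply \cite{Hartshorne:local cohomology}*{Theorems 2.8, 3.8} to the ring $\Witt[n]{A}$ in place of $A$ and get the direct limit description $\varinjlim_J \Ext^d_{\Witt[n]{A}}(\Witt[n]{A}/J, \Witt[n]{A}) \cong \localcoh[d]{\idealm_{\Witt[n]{A}}}{\Witt[n]{A}}$.

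Next I would identify $h_I$ explicitly. The canonical map $\Witt[n]{A}/J \to \Witt[n]{A}/([x_1], \dots, [x_d])^m$ for large $m$, followed by the self-duality of the Koszul complex, realizes $\Ext^d_{\Witt[n]{A}}(\Witt[n]{A}/J, \Witt[n]{A})$ as the submodule of the cokernel above annihilated by $J$: an Ext class corresponds to the image of $\frac{1}{[x_1]\cdots[x_d]}$ times a representative, and the condition that it comes from $\Witt[n]{A}/J$ (rather than from some $\Witt[n]{A}/([x_1],\dots,[x_d])^m$ with $m$ larger) is precisely annihilation by $J$. Injectivity of $h_I$ is then immediate since the limit is filtered and each transition map (for $J' \subset J$, i.e. $I' \subset I$) is injective, being the inclusion of a larger annihilator submodule; this also gives the compatibility with $I' \subset I$ for free. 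I would phrase this last point carefully: $\localcohtorsion[d]{\idealm}{\Witt[n]{A}}{J'} \subset \localcohtorsion[d]{\idealm}{\Witt[n]{A}}{J}$ when $J' \subset J$, and $h_{I'}$ is the restriction of $h_I$, so passing to the union over all $I$ recovers all of $\localcoh[d]{\idealm}{\Witt[n]{A}}$ and shows $\Image(h_I) = \localcohtorsion[d]{\idealm}{\Witt[n]{A}}{J}$.

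For compatibility with $V$, I would use the exact sequence $0 \to \Witt[1]{A} \namedto{V^{n-1}} \Witt[n]{A} \namedto{R^{n-1}} \Witt[n-1]{A} \to 0$ (and more generally the one for $0 \le n' \le n$) together with Lemma \ref{lem:projection formula}, which controls how $V$ interacts with multiplication — in particular $V(x) \cdot [x_i] = V(x \cdot R(F([x_i])))$ is again in the image of $V$ after localizing, so $V$ induces maps on both the Koszul/\v{C}ech cokernels and on the Ext groups, and one checks these agree on representatives $\frac{1}{[x_1]\cdots[x_d]}$. The main obstacle I anticipate is the bookkeeping in this last step: verifying that the Ext-to-local-cohomology identification is genuinely the one induced by the Koszul self-duality (so that ``the submodule annihilated by $J$'' is the correct image, not merely contained in it), and that $V \colon \Witt[n]{A} \to \Witt[n+1]{A}$ is compatible with the choice of regular sequence on both sides — here one must be careful that the regular sequence upstairs is the Teichm\"uller lift of the \emph{same} sequence, and invoke Lemma \ref{lem:projection formula} to move $V$ past the localization. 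Everything else is a direct transcription of the cited results of Hartshorne to the Cohen--Macaulay ring $\Witt[n]{A}$, which is legitimate by Lemma \ref{lem:CM}.
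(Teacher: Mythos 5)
Your overall route is different from the paper's: you work directly with the \v{C}ech/Koszul presentation of $\localcoh[d]{\idealm_{\Witt[n]{A}}}{\Witt[n]{A}}$ via the Teichm\"uller-lifted regular sequence, whereas the paper first reduces to $n=1$ by simply replacing $A$ with the Cohen--Macaulay ring $\Witt[n]{A}$ and then argues purely functorially with $\Ext^d_A(\functorspace, A)$. That reduction is worth adopting: it makes the Witt-vector layer disappear entirely and avoids all of the bookkeeping with $[x_i]$, localization, and $V$ that you flag as the anticipated obstacle.

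More importantly, there is a genuine gap at the central claim, namely that the image of $h_I$ is \emph{all} of the $J$-torsion submodule rather than merely contained in it. Your first justification --- that ``the condition that it comes from $\Witt[n]{A}/J$ \dots\ is precisely annihilation by $J$'' --- is an assertion of the conclusion: Koszul self-duality identifies $\Ext^d(R/\fq, R)$ with the $\fq$-torsion only when $\fq$ is generated by a regular sequence (or a power of one), not for an arbitrary $\fm$-primary $J$. Your fallback --- that the images $h_{I'}$ exhaust $\localcoh[d]{\idealm}{\Witt[n]{A}}$ as $I'$ shrinks, hence $\Image(h_I)$ equals the $J$-torsion --- is a non sequitur: the union being everything is consistent with $\Image(h_I)$ being a proper submodule of the $J$-torsion for each fixed $I$. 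The missing argument is the one the paper supplies: given $e \in \Ext^d_A(A/I', A)$ with $h(e)$ annihilated by $I \supset I'$, choose generators $b_1, \dots, b_N$ of $I$, apply $\Ext^d_A(\functorspace, A)$ to $\bigoplus_{i=1}^N A/I' \namedto{\bigoplus_i b_i} A/I' \to A/I \to 0$, and use the vanishing $\Ext^{d-1}_A(M', A) = 0$ for finite-length $M'$ (Lemma \ref{lem:CM}) to get the left-exact sequence $0 \to \Ext^d_A(A/I, A) \to \Ext^d_A(A/I', A) \namedto{\bigoplus_i b_i} \bigoplus_i \Ext^d_A(A/I', A)$; since $b_i e = 0$, the class $e$ descends to $\Ext^d_A(A/I, A)$. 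Note also that your stated reason for injectivity of the transition maps (``being the inclusion of a larger annihilator submodule'') is circular for the same reason; the correct reason is again the left exactness of $\Ext^d_A(\functorspace, A)$ coming from Lemma \ref{lem:CM}.
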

\begin{proof}
Compatibility with $V$ is clear.

By replacing $A$ with $\Witt[n]{A}$, we may assume $n = 1$ (hence $J = I$).

Note that $\Ext^d_{A}(\functorspace, A)$ is a contravariant left exact functor from the category of $A$-modules of finite length.
Hence, for an inclusion $I' \subset I$, $\Ext^d_{A}(A/I, A) \to \Ext^d_{A}(A/I', A)$ is injective,
and this implies that $h_I$ is injective.

Suppose an element of $\localcoh[d]{\idealm_A}{A}$ has annihilator $I$ and is of the form $h(e)$ 
for $e \in \Ext^d_{A}(A/I', A)$. Then $I' \subset I$.
We want to show that $e$ comes from $\Ext^d_{A}(A/I, A)$.
Take a sequence $b_1, \dots, b_N$ generating $I$.
Applying $\Ext^d_{A}(\functorspace, A)$ to the exact sequence $\bigoplus_{i=1}^N A/I' \namedto{\bigoplus_i b_i} A/I' \to A/I \to 0$,
we obtain an exact sequence
\[
0 \to 
  \Ext^d_{A}(A/I, A) \to
  \Ext^d_{A}(A/I', A) \namedto{\bigoplus_i b_i}
  \bigoplus_i \Ext^d_{A}(A/I', A).
\]
Since $b_i e = 0$, it follows that $e$ comes from $\Ext^d_{A}(A/I, A)$.
\end{proof}

\begin{conv} \label{conv:WittExt}
For simplicity, we will write 
$\localcohtorsion[d]{\idealm_A}{\Witt[n]{A}}{I}$ instead of 
$\localcohtorsion[d]{\idealm_{\Witt[n]{A}}}{\Witt[n]{A}}{(R^{n-1})^{-1}(I)}$. 
\end{conv}

We have morphisms
\begin{align*}
 &\map{V}{\localcoh[d]{\idealm_A}{\Witt[n]{A}}}{\localcoh[d]{\idealm_A}{\Witt[n+1]{A}}}, 
	\\ 
 &\map{R}{\localcoh[d]{\idealm_A}{\Witt[n]{A}}}{\localcoh[d]{\idealm_A}{\Witt[n-1]{A}}}.
\end{align*}

	Suppose $B$ is another Noetherian Cohen--Macaulay local $k$-algebra of dimension $d$
	and $\map{f}{A}{B}$ is a local morphism (an example is the Frobenius morphism $\map{F}{A}{A}$).
	Then we have morphisms
	\[
		\map{f}{\localcoh[d]{\idealm_A}{\Witt[n]{A}}}{\localcoh[d]{\idealm_B}{\Witt[n]{B}}}, 
	\]

\begin{lem} \label{lem:morphisms on local cohomology}
The morphisms $f$, $V$, and $R$ induce morphisms
\begin{gather*}
\map{f}{\localcohtorsion[d]{\idealm_A}{\Witt[n]{A}}{I}}{\localcohtorsion[d]{\idealm_B}{\Witt[n]{B}}{IB}}, \\
\map{V}{\localcohtorsion[d]{\idealm_A}{\Witt[n]{A}}{I}}{\localcohtorsion[d]{\idealm_A}{\Witt[n+1]{A}}{I}}, \\
\map{R}{\localcohtorsion[d]{\idealm_A}{\Witt[n]{A}}{I}}{\localcohtorsion[d]{\idealm_A}{\Witt[n-1]{A}}{I}}. 
\end{gather*}
\end{lem}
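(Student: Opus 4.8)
The plan is to reduce the statement to Lemma~\ref{lem:bis:interpretation of Ext}, which already identifies $\localcohtorsion[d]{\idealm_A}{\Witt[n]{A}}{I}$ with the image of $\Ext^d_{\Witt[n]{A}}(A/I, \Witt[n]{A})$ inside $\localcoh[d]{\idealm_A}{\Witt[n]{A}}$, and then to check that each of $f$, $V$, $R$ carries such an $\Ext$-image into the appropriate $\Ext$-image. Concretely, an element $\eta \in \localcoh[d]{\idealm_A}{\Witt[n]{A}}$ lies in $\localcohtorsion[d]{\idealm_A}{\Witt[n]{A}}{I}$ if and only if it is annihilated by $J = (R^{n-1})^{-1}(I) \subset \Witt[n]{A}$, so the cleanest route is simply to track annihilators: I would show that if $J \cdot \eta = 0$ then the transported class is killed by the corresponding ideal on the target, and that the transported class still lies in $\localcoh[d]$ (which needs the images to land in the correct degree, but this is automatic since all three maps are defined on $\localcoh[d]$ already, as recorded just before the lemma).

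For $R$: since $R\colon \Witt[n]{A} \to \Witt[n-1]{A}$ is a ring homomorphism with $R(J) = (R^{n-2})^{-1}(I) =: J'$ (using $R^{n-1} = R^{n-2} \circ R$), and $R$ on local cohomology is a $\Witt[n]{A}$-module map via this ring map, we get $J' \cdot R(\eta) = R(J \cdot \eta) = 0$; hence $R(\eta)$ is $J'$-torsion, i.e.\ lies in $\localcohtorsion[d]{\idealm_A}{\Witt[n-1]{A}}{I}$ in the notation of Convention~\ref{conv:WittExt}. For $V$: here $V\colon \Witt[n]{A} \to \Witt[n+1]{A}$ is not a ring map, but Lemma~\ref{lem:projection formula} (the projection formula $V^m(x)\cdot y = V^m(x \cdot F^m(R^m(y)))$ with $m=1$) gives, for $y \in \Witt[n+1]{A}$ and $\eta$ a class represented by $V(\text{something})$, the identity $y \cdot V(\eta) = V(F(R(y)) \cdot \eta)$ on local cohomology. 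If $y$ lies in $(R^n)^{-1}(I) \subset \Witt[n+1]{A}$ then $R(y) \in (R^{n-1})^{-1}(I) = J$, so $F(R(y)) \cdot \eta = 0$ because $F(R(y))$ is a $\Witt[n]{A}$-multiple of elements of $J$ (or more simply: $R(y) \in J$ and $J$ is closed under multiplication by $\Witt[n]{A}$, and $F(R(y))$ differs from $R(y)$ only in a way that keeps it in $J$ — one checks $F(J) \subset J$ directly from the definition of $J$ as a preimage under $R^{n-1}$, since $R^{n-1}F = F_A R^{n-1}$ and $I$ is an ideal). Therefore $y \cdot V(\eta) = 0$, so $V(\eta) \in \localcohtorsion[d]{\idealm_A}{\Witt[n+1]{A}}{I}$. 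For $f$: since $f\colon \Witt[n]{A} \to \Witt[n]{B}$ is a ring homomorphism compatible with $R$ (functoriality of $W$, as noted in Section~\ref{sec:Witt}), we have $f(J) \subset (R^{n-1})^{-1}(IB)$, and $f$ on local cohomology is semilinear over $f$, so $(R^{n-1})^{-1}(IB) \cdot f(\eta) \supset f(J \cdot \eta) = 0$ generates the needed annihilation; thus $f(\eta) \in \localcohtorsion[d]{\idealm_B}{\Witt[n]{B}}{IB}$.

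The one point requiring a little care — and the place I would slow down — is the $V$ case, because $V$ is only additive, not multiplicative, so one genuinely needs the projection formula rather than naive functoriality, and one must be slightly careful that an arbitrary class in $\localcohtorsion[d]{\idealm_A}{\Witt[n]{A}}{I}$, viewed inside $\localcoh[d]{\idealm_A}{\Witt[n+1]{A}}$ via $V$, can be manipulated by Lemma~\ref{lem:projection formula}; this is fine because the local cohomology module in degree $d$ is computed as a cokernel of localizations (as recalled at the start of this section) and $V$ commutes with localization at Teichm\"uller elements, so the projection-formula identity, which holds on the level of Witt-vector rings, passes to local cohomology. Everything else is bookkeeping with the preimage ideals $J = (R^{n-1})^{-1}(I)$ and the elementary compatibilities $R^{n-1}F = F_A R^{n-1}$ and $R^{n-1} = R^{n-2} R$. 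I would present the proof in three short paragraphs ($R$, then $f$, then $V$), invoking Lemma~\ref{lem:bis:interpretation of Ext} at the start to reduce to the annihilator description and Lemma~\ref{lem:projection formula} in the $V$ paragraph.
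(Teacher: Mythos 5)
The paper's own proof of this lemma is literally the single word ``Straightforward,'' so there is no argument to compare against; your writeup is the kind of verification the author is leaving to the reader, and the $R$ and $V$ cases are handled correctly (reduction to annihilators, $R(J)=J'$ via surjectivity of $R$, and the projection formula together with $F(J)\subset J$ for the Verschiebung).

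There is, however, one under-argued step, and it is in the case you treat most casually: the map $f$. Writing $J_B:=(R^{n-1})^{-1}(I B)\subset \Witt[n]{B}$, your argument shows only that the \emph{ideal generated by} $f(J)$ annihilates $f(\eta)$. But $J_B$ is in general strictly larger than that ideal: $J_B = \bigl(\text{ideal generated by the Teichm\"uller lifts }[b],\ b\in IB\bigr)+V(\Witt[n-1]{B})$, and the ideal generated by $f(V(\Witt[n-1]{A}))$ is only $V\bigl(f(\Witt[n-1]{A})\cdot F(\Witt[n-1]{B})\cdot\Witt[n-1]{B}\bigr)$, which need not exhaust $V(\Witt[n-1]{B})$ when $B$ is not perfect. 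So you still have to check directly that $V(\Witt[n-1]{B})\cdot f(\eta)=0$. This is done with exactly the tool you already use for $V$: by the projection formula, $V(b)\cdot f(\eta)=V\bigl(b\cdot F(R(f(\eta)))\bigr)=V\bigl(b\cdot f(F(R(\eta)))\bigr)$, and $F(R(\eta))=0$ in $\localcoh[d]{\idealm_A}{\Witt[n-1]{A}}$ because $V(a)\cdot\eta=V(a\cdot F(R(\eta)))=0$ for all $a$ and $V$ is injective on $\localcoh[d]$ (the connecting term $\localcoh[d-1]{\idealm_A}{A}$ vanishes by Cohen--Macaulayness); this is the same mechanism as Lemma~\ref{lem:annihilator}. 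The Teichm\"uller part is handled by writing $[b]$ for $b=\sum c_i f(a_i)$, $a_i\in I$, as $\sum [c_i]\,f([a_i])$ modulo $V(\Witt[n-1]{B})$, with each $[a_i]\in J$. With that paragraph added, your proof is complete.
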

\begin{proof}
Straightforward.
\end{proof}

For example, the Frobenius morphism induces 
\[
\map{F}{\localcohtorsion[d]{\idealm_A}{\Witt[n]{A}}{I}}{\localcohtorsion[d]{\idealm_A}{\Witt[n]{A}}{\pthpower{I}}}.
\]

\begin{lem}
Let $A$ be as above, and $x_1, \dots, x_d \in A$ a regular sequence of $A$.
Then the identification
\[
\localcoh[d]{\idealm_A}{\Witt[n]{A}} \cong 
\Coker \Bigl( \bigoplus_{i = 1}^d \Witt[n]{A[(x_1 \dots \hat{x_i} \dots x_d)^{-1}]} \to \Witt[n]{A[(x_1 \dots x_d)^{-1}]} \Bigr)
\]
is compatible with the morphisms $V$ and $R$.

If $\map{f}{A}{B}$ is as above,
and $f(x_1), \dots, f(x_d)$ is a regular sequence of $B$,
then the same assertion holds for $f$.
\end{lem}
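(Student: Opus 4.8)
The statement to prove is that, under the identification
\[
\localcoh[d]{\idealm_A}{\Witt[n]{A}} \cong
\Coker \Bigl( \bigoplus_{i = 1}^d \Witt[n]{A[(x_1 \dots \hat{x_i} \dots x_d)^{-1}]} \to \Witt[n]{A[(x_1 \dots x_d)^{-1}]} \Bigr)
\]
coming from the \v{C}ech-type presentation of top local cohomology, the maps $V$, $R$ (and $f$, when $f(x_1), \dots, f(x_d)$ is again a regular sequence in $B$) are the ones induced in the obvious termwise way. The approach is to recall how this identification is constructed and then check naturality of each ingredient. The identification factors as: (i) the Cohen--Macaulay property (Lemma \ref{lem:CM}) guarantees that $x_1, \dots, x_d$, hence their Teichm\"uller lifts $[x_1], \dots, [x_d]$, form a regular sequence in $\Witt[n]{A}$, so $\Witt[n]{A}$ is again a $d$-dimensional CM local ring and the general formula for $\localcoh[d]{}{}$ recalled at the start of Section \ref{sec:Witt local cohomology} applies to $\Witt[n]{A}$ with this regular sequence; (ii) the localization $\Witt[n]{A}[\,[x_{i_1} \dots x_{i_r}]^{-1}\,] = \Witt[n]{A[(x_{i_1} \dots x_{i_r})^{-1}]}$, which is the displayed equality at the end of Section \ref{sec:Witt}. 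Combining (i) and (ii) gives exactly the displayed cokernel description.

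\textbf{Key steps, in order.} First, record that $[x_1], \dots, [x_d]$ is a regular sequence in $\Witt[n]{A}$ by Lemma \ref{lem:CM} (the Teichm\"uller lift of a regular sequence is regular), and that $R^{n-1}([x_i]) = x_i$, so the reduction modulo $V^{n-1}$ is compatible. Second, identify the localizations: for each subset $\set{i_1, \dots, i_r} \subset \set{1, \dots, d}$ we have $\Witt[n]{A}[\,[x_{i_1}\cdots x_{i_r}]^{-1}\,] = \Witt[n]{A[(x_{i_1}\cdots x_{i_r})^{-1}]}$, using multiplicativity of Teichm\"uller lifts and the cited equality; so the \v{C}ech complex of $\Witt[n]{A}$ with respect to $[x_1], \dots, [x_d]$ is, termwise, $\Witt[n]{(\text{\v{C}ech complex of } A)}$. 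Third, observe that $V \colon \Witt[n]{} \to \Witt[n+1]{}$ and $R \colon \Witt[n]{} \to \Witt[n-1]{}$ are defined on each Witt ring by the same componentwise formulas and commute with localization at a Teichm\"uller element (for $V$ this uses Lemma \ref{lem:projection formula}, i.e.\ the projection formula $V^m(x)\cdot y = V^m(x\cdot F^m R^m(y))$, to see that $V$ of a fraction $a/[u]^k$ is $V(a)/[u]^{p k}$ up to the Frobenius twist that is absorbed correctly; for $R$ it is immediate since $R$ is a ring homomorphism and $R([u]) = [u]$). Hence $V$ and $R$ act on the \v{C}ech complexes termwise, and therefore on the cokernels termwise, which is the claim. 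Fourth, for a local morphism $f \colon A \to B$ with $f(x_1), \dots, f(x_d)$ regular in $B$: $f$ induces $\Witt[n]{f} \colon \Witt[n]{A} \to \Witt[n]{B}$ sending $[x_i] \mapsto [f(x_i)]$, which is again a regular sequence; functoriality of $\Witt[n]{-}$ with respect to localizations gives a map of \v{C}ech complexes, compatible with the identifications of step two, whence a map on cokernels agreeing with the $f$ of Section \ref{sec:Witt local cohomology} defined via $\Ext$ (use the $\varinjlim_I \Ext$ description and Lemma \ref{lem:bis:interpretation of Ext} to match the two, or simply note both are the canonical map on top local cohomology induced by $f$).

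\textbf{Expected main obstacle.} The genuinely non-formal point is step three for $V$: verifying that the Verschiebung is compatible with inverting a Teichm\"uller element, i.e.\ that the natural map $\Witt[n]{A}[\,[u]^{-1}\,] \to \Witt[n+1]{A}[\,[u]^{-1}\,]$ carries the localized Verschiebung to the Verschiebung of the localization. This is where one must use the projection formula of Lemma \ref{lem:projection formula} rather than a naive termwise argument, because $V$ is only additive, not multiplicative, and a fraction $V(a)/[u]^k$ must be rewritten via $V(a)/[u]^k = V(a \cdot F^{-1}\text{-correction})/\dots$ — more precisely, $[u]^k \cdot V(a/[u']^j)$ has to be analyzed using $V(x)\cdot[y] = V(x\cdot F([y]')) $ where the extra Frobenius is exactly compensated by the $p$-th power appearing when a unit is inverted in the larger Witt ring. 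Everything else (steps one, two, four) is bookkeeping on functoriality of $\Witt[n]{-}$ and of taking cokernels, and the proof of the lemma will accordingly be short, with the phrase ``straightforward'' or ``clear'' for those parts and the projection-formula check spelled out for $V$.
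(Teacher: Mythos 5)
Your proposal is correct, and it fills in — in essentially the only reasonable way — the details behind the paper's one-word proof (``Clear''): the identification rests precisely on the three ingredients you isolate (Teichm\"uller lifts of a regular sequence form a regular sequence, the identity $\Witt[n]{A[\tfrac1x]}=\Witt[n]{A}[\tfrac1{[x]}]$, and termwise functoriality of the \v Cech complex). One small correction to your $V$-step: the relation you display is inverted — the projection formula gives $V(a)\cdot[u]^{-k}=V(a\cdot[u]^{-pk})$, i.e.\ $V(a/[u]^{k})=V(a[u]^{(p-1)k})/[u]^{k}$, not $V(a/[u]^{k})=V(a)/[u]^{pk}$ — and in fact this computation can be bypassed: since the \v Cech terms in the statement are written as $\Witt[n]{A[(\cdots)^{-1}]}$ (Witt vectors \emph{of} the localizations), $V$ acts on them as the ordinary componentwise shift of each ring and commutes with the \v Cech differentials simply because $V$ commutes with $\Witt[n]{g}$ for every ring homomorphism $g$; the projection formula is only needed if one insists on writing elements as fractions with Teichm\"uller denominators.
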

\begin{proof}
Clear.
\end{proof}

\section{Computing morphisms on $W_n$-valued local cohomology groups on RDPs} \label{sec:RDP}

We carry out computations of Frobenius and other inseparable morphisms on $W_n$-valued local cohomology groups on RDPs,
which will be used in Section \ref{sec:height of morphisms} to prove the main theorems.
In Section \ref{sec:RDP:preparation} we fix notations.
In Section \ref{sec:RDP:Frobenius} (Propositions \ref{prop:local Frob:2:D}, \ref{prop:local Frob:2:E8}, \ref{prop:local Frob:E:bis})
we compute Frobenius morphisms.
The simplest is Proposition \ref{prop:local Frob:E:bis}(\ref{case:2:E6},\ref{case:3:E6E7},\ref{case:5:E8}), where only $\Witt[1]{A} = A$ is needed,
while the most complicated is Proposition \ref{prop:local Frob:2:D}, in which $D_N^r$ in characteristic $2$ is considered and $W_n$ with unbounded $n$ is needed.
In Section \ref{sec:RDP:mu alpha} (Proposition \ref{prop:local mu_p alpha_p}) we compute $\mu_p$- and $\alpha_p$-quotient morphisms.

\subsection{Equations of RDPs} \label{sec:RDP:preparation}

\begin{conv} \label{conv:equations of non-taut RDP}
	For each non-taut RDP $A$, 
	we often work under an isomorphism $A \cong k[[x,y,z]] / (f)$, 
	where $f$ is the polynomial as in Table \ref{table:non-taut RDP}.
	In the case of $D_N^0$ we use two different equations.
	All equations are taken from \cite{Artin:RDP},
	except for the equation with the term $ z x y^{m-r}$ in the case of $D_N^0$.
	\begin{table}
		\caption{Equations of non-taut RDPs} \label{table:non-taut RDP}
		\begin{tabular}{lllll}
			\toprule
			$p$ & & & equation & \\
			\midrule
			$2$ & $D_{2m}^r$   & $0 \leq r \leq m-1$ & $z^2 + x^2 y + x y^m + z x y^{m-r}$ & \\
			$2$ & $D_{2m}^0$   &                     & $z^2 + x^2 y + x y^m $              & \\
			$2$ & $D_{2m+1}^r$ & $0 \leq r \leq m-1$ & $z^2 + x^2 y + z y^m + z x y^{m-r}$ & \\
			$2$ & $D_{2m+1}^0$ &                     & $z^2 + x^2 y + z y^m $              & \\
			$2$ & $E_6^r$      & $r = 1,0$           & $z^2 + x^3   + y^2 z + b x y z$     & $b = 1,0$ \\
			$2$ & $E_7^r$      & $r = 3,2,1,0$       & $z^2 + x^3 + x y^3 + \beta$ & $\beta = z x y, z y^3, z x^2 y, 0$ \\
			$2$ & $E_8^r$      & $r = 4,3,2,1,0$     & $z^2 + x^3 + y^5 + \beta$ & $\beta = z x y, z y^3, z x y^2, z x y^3, 0$ \\
			\midrule
			$3$ & $E_6^r$ & $r = 1,0$   & $ z^2 + x^3   +   y^4   + b x^2 y^2  $ & $b = 1,0$ \\
			$3$ & $E_7^r$ & $r = 1,0$   & $ z^2 + x^3   + x y^3   + b x^2 y^2  $ & $b = 1,0$ \\
			$3$ & $E_8^r$ & $r = 2,1,0$ & $ z^2 + x^3   + y^5 + \lambda x^2 y^2$ & $\lambda = 1, y, 0$ \\
			\midrule
			$5$ & $E_8^r$ & $r = 1,0$   & $ z^2 + x^3   +   y^5   + (b/2) x y^4 $ & $b = 2,0$ \\
			\bottomrule
		\end{tabular}
	\end{table}
\end{conv}

\begin{setting} \label{setting:I}
Suppose a non-taut RDP $A$ and an integer $j \geq 1$ 
 satisfy one of the following, 
and define an ideal $I_j \subset A$ accordingly.
	\begin{itemize}
		\item $j = 1$, and $I_1 = \idealm_A \subset A$ is the maximal ideal.
		\item $1 \leq j \leq \floor{N/2}-1$, 
		$A$ is an RDP of type $D_{N}^r$ in characteristic $2$,
		and $I_j \subset A$ consists of the elements whose vanishing order at the $2j$-th component of 
		the exceptional divisor of the minimal resolution of $A$ is at least $2j$.
		\item $j = 2$, 
		$A$ is an RDP of type $E_8^r$ in characteristic $2$, 
		and $I_j \subset A$ consists of the elements whose vanishing order at the $4$-th component of 
		the exceptional divisor of the minimal resolution of $A$ is at least $8$.
	\end{itemize}
Here, in the case of $D_N$ or $E_8$, the components are ordered in a way that 
the $1$-st component is the end of the longest branch of the Dynkin diagram,
and the $i$-th component ($i \leq N - 2$ or $i \leq 5$ respectively)
is the unique component of distance $i-1$ from the $1$-st component.
(In the case of $D_4$ the longest branch is not unique, but still the $2j$-th component, $j = 1$, is well-defined.)
\end{setting}

\begin{lem} \label{lem:basis}
	Suppose $A$, $j$, $I_j$ are as above.
	Fix an isomorphism $A \cong k[[x,y,z]] / (f)$ 
	with $f$ is as in Table \ref{table:non-taut RDP}.
	Then, \begin{enumerate}
		\item \label{item:I_j}
		We have $I_j = (x, y^j, z)$,
		\item \label{item:alpha 1}
		The class $[\varepsilon]$ of $\varepsilon := x^{-1} y^{-j} z$ is a generator of the $A$-module $\localcohtorsion{\idealm_A}{A}{I_j}$ with $\Ann([\varepsilon]) = I_j$.
	\end{enumerate}
\end{lem}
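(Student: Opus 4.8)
First I would establish \eqref{item:I_j}. Consider the minimal resolution $\map{\rho}{\widetilde{A}}{\Spec A}$ and the chain of exceptional curves; by Setting \ref{setting:I} the ideal $I_j$ is the pullback of the ideal defining vanishing order $\geq 2j$ along the $2j$-th component $C_{2j}$. I would translate this into the coordinate description by recalling how the minimal resolution of a $D_N$-type RDP given by $f$ in Table \ref{table:non-taut RDP} is obtained by successive blow-ups, and reading off the multiplicities of the strict transforms of $x$, $y$, $z$ along each exceptional component. One checks that along $C_{2j}$ the function $y$ has order $1$ (so $y^j$ has order $j$; but note the ideal is defined by order $\geq 2j$, so I need to be careful: presumably the relevant local coordinate on the resolution near $C_{2j}$ makes $y$ of order $2$, or the component is a $(-2)$-curve met with multiplicity so that $y^j$ has order exactly $2j$), while $x$ and $z$ have order $\geq 2j$ as well. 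The cases $j=1$ ($I_1 = \fm_A = (x,y,z)$) and the $E_8^r$ case ($j=2$, order $\geq 8$ at $C_4$) should be handled analogously or by direct inspection. The upshot is the equality $I_j = (x, y^j, z)$; the inclusion $\supseteq$ is the valuation computation, and $\subseteq$ follows because the right-hand side is already $\idealm_A$-primary and the quotient $A/(x,y^j,z) \cong k[[y]]/(y^j)$ has the expected length (matching the colength one computes from the resolution), forcing equality.

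Next, for \eqref{item:alpha 1}, I would use the description of $\localcoh{\idealm_A}{A}$ from Section \ref{sec:Witt local cohomology}. Since $A$ is a $2$-dimensional hypersurface, hence Cohen--Macaulay, and $x_1 = x$, $x_2 = z$ (say) form a regular sequence cutting out an ideal with radical $\idealm_A$ — wait, $(x,z)$ need not be $\idealm_A$-primary since $f$ may lie in $(x,z)$; I would instead pick a genuine regular sequence, e.g.\ $y$ together with a suitable element, or work with $\localcohtorsion{\idealm_A}{A}{I_j}$ directly via Lemma \ref{lem:bis:interpretation of Ext}: this identifies it with $\Ext^2_A(A/I_j, A)$. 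Now $A/I_j = k[[x,y,z]]/(f, x, y^j, z) = k[[y]]/(y^j)$, and I compute $\Ext^2_A(k[[y]]/(y^j), A)$. Since $A = k[[x,y,z]]/(f)$, I would resolve $A/I_j$ over $A$ (or use the change-of-rings spectral sequence from $k[[x,y,z]]$): $A/I_j$ has projective dimension $2$ over $A$ (as $A$ is a hypersurface of dimension $2$ and $A/I_j$ has depth $0$), and the top Ext is $\Hom_A(\text{top syzygy}, A)$. Concretely, $(x, y^j, z)$ is generated by $3$ elements but has a relation coming from $f$; unwinding this gives $\Ext^2_A(A/I_j, A) \cong A/I_j$ as an $A$-module, and tracing through the Koszul-type identification produces an explicit generator. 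I would then exhibit this generator in the $\Coker$-description of $\localcoh{2}{\idealm_A}{A}$ as the class of $x^{-1}y^{-j}z$, and verify directly that $x \cdot [\varepsilon] = 0$, $z \cdot [\varepsilon] = 0$, and $y^j \cdot [\varepsilon] = 0$ in the local cohomology (each is a coboundary: e.g.\ $y^j \varepsilon = x^{-1} z \in A[x^{-1}] + A[y^{-j}]$-type term, hence dies in the cokernel), so $I_j \subseteq \Ann([\varepsilon])$; and conversely that $[\varepsilon] \neq 0$ and is not killed by anything outside $I_j$, giving $\Ann([\varepsilon]) = I_j$ exactly and hence $[\varepsilon]$ generates the $I_j$-torsion part (which is isomorphic to $A/I_j$, cyclic).

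The main obstacle I expect is the first part — pinning down $I_j = (x,y^j,z)$ — because it requires an honest, case-by-case analysis of the resolution graphs of $D_{2m}^r$, $D_{2m+1}^r$, and $E_8^r$ and a careful bookkeeping of vanishing orders of $x, y, z$ along the intermediate exceptional components through the blow-up sequence, including the slightly delicate convention about how components are numbered (the $1$-st component being the tip of the longest branch, the $D_4$ ambiguity, etc.). By contrast, once $I_j$ is known explicitly, part \eqref{item:alpha 1} is a fairly mechanical local-cohomology / Ext computation: identifying $A/I_j \cong k[[y]]/(y^j)$, checking $\varepsilon = x^{-1}y^{-j}z$ is a nonzero class annihilated exactly by $(x, y^j, z)$, and invoking Lemma \ref{lem:bis:interpretation of Ext} to conclude it generates $\localcohtorsion{\idealm_A}{A}{I_j}$. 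I would present part \eqref{item:I_j} by doing one representative $D$-case in detail and indicating that the others are identical, and verify part \eqref{item:alpha 1} uniformly using the coordinate form $f \in (x, y^j, z)$ — valid since every $f$ in Table \ref{table:non-taut RDP} manifestly lies in $(x, z)$ once $j \geq 1$, except one must check the pure $y$-power terms like $y^m$, $y^4$, $y^5$ have exponent $\geq j$, which holds in all the listed ranges.
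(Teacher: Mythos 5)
Your plan coincides with the paper's (very terse) proof: part (1) is dismissed there as a straightforward valuation check on the resolution, and for part (2) the paper uses the Gorenstein property to get $\dim_k \localcohtorsion{\idealm_A}{A}{I_j} = \dim_k \Ext^2(A/I_j, A) = \dim_k A/I_j = j$ and then verifies in $\Coker\bigl(A[x^{-1}] \oplus A[y^{-1}] \to A[(xy)^{-1}]\bigr)$ that $[x\varepsilon]$, $[y^j\varepsilon]$, $[z\varepsilon]$ vanish while $[y^{j-1}\varepsilon]$ does not --- which is exactly your annihilator computation, since $A/I_j \cong k[[y]]/(y^j)$ makes ``not killed by anything outside $I_j$'' equivalent to $[y^{j-1}\varepsilon]\neq 0$. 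Your detour through an explicit free resolution to see $\Ext^2_A(A/I_j,A)\cong A/I_j$ is just a longer route to the same length count, and your self-correction from the non-$\idealm_A$-primary pair $(x,z)$ to the regular sequence $x,y$ lands on the paper's choice.
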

\begin{proof}
	(\ref{item:I_j}) 
	Straightforward.
	
	(\ref{item:alpha 1})
	Since $A$ is Gorenstein, we have 
	$\dim_k \localcohtorsion{\idealm_A}{A}{I_j} = \dim_k \Ext^2(A/I_j, A) = \dim_k A/I_j = j$.
	Hence it suffices to check that, 
	in $\Coker (A[x^{-1}] \oplus A[y^{-1}] \to A[(xy)^{-1}])$,
	the classes $[x \varepsilon], [y^j \varepsilon], [z \varepsilon]$ are trivial
	and $[y^{j-1} \varepsilon]$ is nontrivial.
	Straightforward.
\end{proof}

\begin{lem} \label{lem:annihilator}
	Let $A = k[[x,y,z]]/(f)$ be a local ring such that $x,y$ is a regular sequence, and let $j \geq 1$ an integer.
	Let $\varepsilon = x^{-1}y^{-j}z \in A[(xy)^{-1}]$, $I = (x, y^j, z) \subset A$,
	and $e = [(\varepsilon, 0, \dots, 0)] \in \localcoh{\idealm_A}{\Witt[n]{A}}$.
	To show that $e$ belongs to $\localcohtorsion{\idealm_A}{\Witt[n]{A}}{I}$, 
	it suffices to show that $F(R(e)) = 0$ (in $\localcoh{\idealm_A}{\Witt[n-1]{A}}$) and $ze = 0$.
\end{lem}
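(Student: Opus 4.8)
The goal is to show that the element $e = [(\varepsilon, 0, \dots, 0)]$ lies in $\localcohtorsion{\idealm_A}{\Witt[n]{A}}{I}$, where $I = (x, y^j, z)$ and $\varepsilon = x^{-1}y^{-j}z$. By definition this means $e$ is annihilated by $J := (R^{n-1})^{-1}(I) \subset \Witt[n]{A}$. Since $J$ is generated (as an ideal) by the Teichm\"uller lifts $[x], [y^j], [z]$ together with the subgroup $V W_{n-1}(A)$ (indeed $R^{n-1}([x]) = x$ etc.\ generate $I$, and $V W_{n-1}(A) = \ker(R^{n-1})$ maps to $0$ in $A/I$), it suffices to show that $[x] \cdot e = [y^j] \cdot e = [z]\cdot e = 0$ and that $V(w)\cdot e = 0$ for all $w \in W_{n-1}(A)$. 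The plan is to reduce all of these to the two stated hypotheses $F(R(e)) = 0$ and $z e = 0$ (here $ze$ means $[z]\cdot e$, using the Teichm\"uller lift by the convention of the section).

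First I would dispatch $[x]\cdot e$ and $[y^j]\cdot e$ directly: $[x]\cdot(\varepsilon, 0, \dots, 0) = (x\varepsilon, 0, \dots, 0) = (y^{-j}z, 0, \dots, 0)$, which already lies in $\Witt[n]{A[y^{-1}]}$, hence represents $0$ in the cokernel defining $\localcoh{\idealm_A}{\Witt[n]{A}}$; similarly $[y^j]\cdot(\varepsilon,0,\dots,0) = (y^j\varepsilon, 0, \dots, 0) = (x^{-1}z,0,\dots,0) \in \Witt[n]{A[x^{-1}]}$ is also $0$. The term $[z]\cdot e = 0$ is exactly the second hypothesis. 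The remaining and only substantive point is the Verschiebung part: I must show $V(w)\cdot e = 0$ for every $w \in \Witt[n-1]{A}$. Here I would invoke the projection formula of Lemma~\ref{lem:projection formula}: for $w \in \Witt[n-1]{A}$ and $e \in \localcoh{\idealm_A}{\Witt[n]{A}}$ (with $m = 1$ in that lemma, after lifting $w$ along $R$), one has $V(w)\cdot e = V\bigl(w \cdot F(R(e))\bigr)$ up to the appropriate identifications. Since $F(R(e)) = 0$ by the first hypothesis, the right-hand side vanishes, hence $V(w)\cdot e = 0$.

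The one technical subtlety — and the step I expect to require the most care — is applying Lemma~\ref{lem:projection formula} at the level of local cohomology rather than of the rings $W_n$ themselves: one needs to know that the formula $V^m(x)\cdot y = V^m(x\cdot F^m(R^m y))$, proved for Witt vectors, passes to the Čech-cokernel description $\localcoh{\idealm_A}{\Witt[n]{A}} = \Coker(\bigoplus_i \Witt[n]{A[(x_1\cdots\hat{x_i}\cdots x_d)^{-1}]} \to \Witt[n]{A[(x_1\cdots x_d)^{-1}]})$, using that localization commutes with $W_n$ (the remark $\Witt[n]{A[\frac1x]} = \Witt[n]{A}[\frac1{[x]}]$) and that $V$, $R$, $F$ act compatibly on these cokernels (the last displayed Lemma of Section~\ref{sec:Witt local cohomology}). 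Once this compatibility is in place, the computation is formal.

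Concretely: writing $J = ([x], [y^j], [z]) + V\Witt[n-1]{A}$ as above, we have checked $[x] e = [y^j] e = [z] e = 0$, and for $w \in \Witt[n-1]{A}$ we get $V(w)\cdot e = V(w \cdot F(R(e))) = V(w\cdot 0) = 0$ by Lemmas~\ref{lem:projection formula} and the hypothesis $F(R(e)) = 0$. Hence $Je = 0$, so by Lemma~\ref{lem:bis:interpretation of Ext} (with Convention~\ref{conv:WittExt}) the class $e$ lies in $\localcohtorsion{\idealm_A}{\Witt[n]{A}}{I}$, as claimed. $\qed$
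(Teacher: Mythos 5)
Your proof is correct and follows essentially the same route as the paper: decompose the annihilator condition into the Teichm\"uller generators $[x],[y^j],[z]$ plus the image of $V$, observe that $[x]e$ and $[y^j]e$ vanish for trivial reasons and $[z]e=0$ is assumed, and kill $V(b)\cdot e$ via the projection formula $V(b)\cdot e = V(b\cdot F(R(e)))$ together with the hypothesis $F(R(e))=0$. The paper's proof is just a terser version of exactly this argument.
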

\begin{proof}
	It suffices to check $(a_0, 0, \dots, 0) \in \Ann(e)$ for $a_0 \in \set{x,y^j,z}$
	and $V(b) \in \Ann(e)$ for $b \in \Witt[n-1]{A}$.
	The former assertion is obvious for $a_0 = x, y^j$ and is assumed for $a_0 = z$.
	For the latter assertion we have $V(b) \cdot e = V(b \cdot F(R(e))) = 0$ since $F(R(e)) = 0$.
\end{proof}

\subsection{Frobenius morphisms} \label{sec:RDP:Frobenius}

Let $A$ be a non-taut RDP.
In this section we 
compute the Frobenius images of certain elements of the local cohomology groups $\localcohtorsion{\idealm_A}{\Witt[n_j]{A}}{I_j}$, 
where $I_j \subset A$ are the $\idealm_A$-primary ideals introduced in Setting \ref{setting:I}.

\begin{prop} \label{prop:local Frob:2:D}
Let $A$ be an RDP of type $D_{N}^r$ in characteristic $p = 2$.
Let $j \geq 1$ and $n \geq 1$ be integers and assume 
\begin{itemize}
\item $\floor{N/2} \geq C_1(n,j) 
                 := 2 j + (2^{n-1} - 1) (2 j - 1) = 2^{n-1} (2 j - 1) + 1$, and
\item either $n = 1$ or $\floor{N/2} - r \geq C_1(n-1, j)$.
\end{itemize}
Let $I_j$ be the ideal defined as in Setting \ref{setting:I}.
Then there is an element $e \in \localcohtorsion{\idealm_A}{\Witt[n]{A}}{I_j}$ satisfying the following two conditions:
\begin{itemize}
	\item
	its restriction $R^{n-1}(e) \in \localcohtorsion{\idealm_A}{A}{I_j}$ is a generator, and
	\item
	its image $F(e)$ by the Frobenius map
	\[
	\map{F}{\localcohtorsion{\idealm_A}{\Witt[n]{A}}{I_j}}{\localcohtorsion{\idealm_A}{\Witt[n]{A}}{\pthpower{I_j}}}
	\]
	satisfies,
	letting $a := \floor{N/2} - r - C_1(n,j)$,
	\[
	F(e) = \begin{cases}
	0 & \text{(if $a \geq 0$)}, \\
	V^{n-1}( e') & \text{(if $a < 0$)}
	\end{cases}
	\]
	for some generator $e' \in {\localcohtorsion{\idealm_A}{A}{I_{-a}}}$.
\end{itemize}
\end{prop}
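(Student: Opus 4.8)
The strategy is to produce the element $e$ by hand as a Witt vector of length $n$ whose zeroth component is the Teichmüller-type class $[(\varepsilon,0,\dots,0)]$ with $\varepsilon = x^{-1}y^{-j}z$, and then to repeatedly use the defining equation $f = z^2 + x^2 y + x y^m + z x y^{m-r}$ (or its $D_{2m+1}$ analogue) to rewrite $z^2$ in the local cohomology module, descending in Witt-length by one step at a time. Concretely, in $\localcoh{\idealm_A}{\Witt[n]{A}}$ with the Čech presentation of Section \ref{sec:Witt local cohomology} relative to the regular sequence $x,y$, one has $z e = [(z\varepsilon,0,\dots)] = [(x^{-1}y^{-j}z^2,0,\dots)]$, and substituting $z^2 = x^2 y + x y^m + zxy^{m-r}$ (all operations in characteristic $2$) expresses this in terms of classes that are either visibly trivial (those divisible by $x$ or by $y^j$ in the appropriate sense) or again involve $\varepsilon$ multiplied by a power of $y$. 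The precise bookkeeping of which power survives is exactly what produces the constant $C_1(n,j) = 2^{n-1}(2j-1)+1$: each application of Frobenius squares the relevant exponent, and the hypothesis $\floor{N/2} \ge C_1(n,j)$ guarantees enough room in the equation for the cancellation to go through. I would set up the induction on $n$, the base case $n=1$ being a direct computation in $A[(xy)^{-1}]$ using Lemma \ref{lem:basis}.

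The core of the argument is the inductive step. Assuming the statement for $n-1$, one lifts the length-$(n-1)$ element to a length-$n$ element $e$ with $R^{n-1}(e)$ a generator of $\localcohtorsion{\idealm_A}{A}{I_j}$; the existence of such a lift, and the verification that it lands in the $I_j$-torsion part, is handled by Lemma \ref{lem:annihilator}, which reduces the membership check to showing $F(R(e)) = 0$ in $\localcoh{\idealm_A}{\Witt[n-1]{A}}$ and $ze = 0$. The first of these is precisely the Frobenius computation one level down (the hypothesis "$n=1$ or $\floor{N/2}-r \ge C_1(n-1,j)$" is exactly the $a\ge 0$ case of the inductive hypothesis for $n-1$), and the second is a Čech-level identity. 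Then to compute $F(e)$ itself one applies $F$ componentwise, getting $[(\varepsilon^2,\dots)] = [(x^{-2}y^{-2j}z^2,\dots)]$ in the zeroth slot, substitutes the equation for $z^2$, and uses the Verschiebung projection formula (Lemma \ref{lem:projection formula}) together with the subtraction formulas of Lemma \ref{lem:subtraction}\eqref{item:subtraction:2:n} to collect the lower Witt components. When $a = \floor{N/2}-r-C_1(n,j) \ge 0$, every surviving term is killed (there is enough slack in the exponents), so $F(e)=0$; when $a<0$, the obstruction term is exactly $V^{n-1}$ applied to a class of the form $[x^{-1}y^{-(-a)}z]$, which by Lemma \ref{lem:basis}\eqref{item:alpha 1} is a generator of $\localcohtorsion{\idealm_A}{A}{I_{-a}}$.

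The main obstacle I expect is the combinatorial/arithmetic control of the exponents through the iterated Frobenius and Verschiebung operations: tracking, in $\Witt[n]{A[(xy)^{-1}]}$, precisely which monomials $x^{-s}y^{-t}z$ appear in each Witt component after substituting $z^2$ repeatedly, and showing that the minimal surviving $y$-exponent is governed by the recursion defining $C_1(n,j)$. The subtraction polynomials $S_i$ with $S_i \equiv t_1 t_2^{2^i-1} \pmod{t_1^2}$ from Lemma \ref{lem:subtraction}\eqref{item:subtraction:2:n} are the key input here — they tell us the leading behavior of the correction terms that migrate down to lower Witt components — but assembling them into the clean closed form for $F(e)$, and separating the $D_{2m}$ and $D_{2m+1}$ cases (where the role of the monomial $xy^m$ versus $zy^m$ in $f$ differs), will require careful case analysis. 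The role of the extra term $zxy^{m-r}$, which is precisely what distinguishes $D_N^r$ from $D_N^0$, is what forces the "$-r$" shift in the definition of $a$, and checking that its contribution enters exactly once and at the right Witt level is the delicate point.
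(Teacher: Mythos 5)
Your proposal follows essentially the same route as the paper's proof: the same class $e = [(\varepsilon,0,\dots,0)]$ with $\varepsilon = x^{-1}y^{-j}z$, membership in the $I_j$-torsion part via Lemma \ref{lem:annihilator}, a single substitution of the defining equation to get $\varepsilon^2 = \xi + \lambda\varepsilon + \eta$, and the subtraction polynomials $S_i \equiv t_1 t_2^{2^i-1} \pmod{t_1^2}$ of Lemma \ref{lem:subtraction}(\ref{item:subtraction:2:n}) to locate the single surviving term $\eta^{2^{n-1}-1}\lambda\varepsilon = x^{-1}y^{a}z$ in the top Witt component. The only organizational difference is that the paper does not induct on $n$ but runs a direct three-case weight estimate on the monomials $\xi^{i_1}(\lambda\varepsilon)^{i_2}\eta^{i_3}$ of degree $2^i$ in $Q_i = S_i(\xi+\lambda\varepsilon,\eta)$ (the exponent $2^{n-1}$ coming from the homogeneity of $S_{n-1}$ rather than from iterating Frobenius), with your two hypotheses entering exactly where you predict.
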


We will use this proposition (in the proof of Theorem \ref{thm:main}) 
only in the following cases.
\begin{itemize}
\item $\floor{N/2} - r > 2^{n-1}$ and $j = 1$. In this case $a \geq 0$.
\item $\floor{N/2} - r = 2^{n-1} (2 j - 1)$ and $r > 0$. In this case $a = -1$.
\end{itemize}

\begin{proof}
Let $\varepsilon = x^{-1} y^{-j} z$ (with respect to the equation specified below),
and consider the class $e = [(\varepsilon, 0, \dots, 0)] \in \localcoh{\idealm_A}{\Witt[n]{A}}$.
To show $e \in \localcohtorsion{\idealm_A}{\Witt[n]{A}}{I_j}$, it suffices (by Lemma \ref{lem:annihilator}) to check that $ze = 0$ and $F(R(e)) = 0$,
and both follow from the computation of $F(e)$ below (using $z e = x y^j F(e)$).

We will discuss the two cases $N = 2m$ and $N = 2m+1$ ($D_{2m}^r$ and $D_{2m+1}^r$) in a parallel way.
We may assume $A \cong k[[x,y,z]] / (f)$, 
\[
f = \begin{cases}
z^2 + x^2 y + x y^m + z x y^{m-r} & (D_{2m}^r), \\
z^2 + x^2 y + z y^m + z x y^{m-r} & (D_{2m+1}^r),
\end{cases}
\]
and then we have $I_j = (x, y^j, z)$.
Let 
\begin{gather*}
\lambda := y^{m - r - j}, \;
\varepsilon := \frac{z}{x y^j}, \;
\eta := \frac{1}{y^{2j - 1}}, 
\; \xi := \begin{cases}
\dfrac{y^{m - 2j}}{x}    & (D_{2m}^r), \\
\dfrac{y^{m - 2j}z}{x^2} & (D_{2m+1}^r), 
\end{cases}
\end{gather*}
in $A[(xy)^{-1}]$.
Then we have $\varepsilon^2 + \eta + \xi + \lambda \varepsilon = 0$.

As in Lemma \ref{lem:subtraction}(\ref{item:subtraction:2:n}), define polynomials $S_i \in k[t_1,t_2]$
by \[ (t_1 + t_2, 0, \dots) - (t_2, 0, \dots) = (S_0(t_1, t_2), S_1(t_1, t_2), \dots),\]
and let $Q_i := S_i(\xi + \lambda \varepsilon, \eta)$ ($0 \leq i \leq n-1$).
We claim that 
\[
Q_i \equiv 
\begin{cases}
0 & \text{(if $i < n-1$),} \\
\eta^{2^{n-1}-1} \lambda \varepsilon & \text{(if $i = n-1$)}
\end{cases}
\pmod{A[x^{-1}]}.
\]
By Lemma \ref{lem:subtraction}(\ref{item:subtraction:2:n}), 
$Q_i$ is a linear combination of monomials $\xi^{i_1} (\lambda \varepsilon)^{i_2} \eta^{i_3}$
with $i_1 + i_2 + i_3 = 2^{i}$ and $(i_1, i_2, i_3) \neq (0, 0, 2^{i})$.
Let $c(i_1, i_2, i_3) := (m-2j) i_1 + (m-r-2j) i_2 + (-(2j-1)) i_3$,
so that $\xi^{i_1} (\lambda \varepsilon)^{i_2} \eta^{i_3} \in y^{c(i_1, i_2, i_3)} A[x^{-1}]$.
We shall show that $c(i_1, i_2, i_3) \geq 0$ for all such $(i_1, i_2, i_3)$ except $(0, 1, 2^{n-1}-1)$.
\begin{itemize}
\item 
If $i_1 \geq 1$, 
then 
\begin{align*}
c(i_1, i_2, i_3) 
&\geq c(1, 0, 2^i - 1) \\
&= m - 2j - (2^{i} - 1)(2 j - 1) \\
&= m - C_1(i+1,j) \geq m - C_1(n,j),
\end{align*}
which is $\geq 0$ by assumption.
\item 
If $i_2 \geq 1$ and $i < n-1$ (hence $n \geq 2$), then 
\begin{align*}
c(i_1, i_2, i_3) 
&\geq c(0, 1, 2^i - 1) \\
&= m - r - j - (2^{i}-1)(2j-1) - j \\
&= m - r - C_1(i+1, j) \geq m - r - C_1(n-1, j), 
\end{align*}
which is $\geq 0$ by assumption.
\item 
If $i_2 \geq 2$ and $i = n-1 \geq 1$ (hence $n \geq 2$),
then 
\begin{align*}
c(i_1, i_2, i_3) 
&\geq c(0, 2, 2^{n-1} - 2)
 = 2 c(0, 1, 2^{n-2} - 1), 
\end{align*}
which is $\geq 0$ by the previous case.
\end{itemize}

For the remaining term $\eta^{2^{n-1} - 1} {\lambda} \varepsilon$, 
which appears in $Q_{n-1}$ with coefficient $1$ by Lemma \ref{lem:subtraction}(\ref{item:subtraction:2:n}),
we have
\[
\eta^{2^{n-1} - 1} {\lambda} 
= y^{m - r - j - (2^{n-1} - 1)(2 j - 1)}
= y^{m - r - C_1(n,j) + j} 
= y^{a + j},
\]
where $a$ as in the statement.
Hence $\eta^{2^{n-1} - 1} {\lambda} \varepsilon = x^{-1} y^a z$. 
Therefore we have
\begin{align*}
F(\varepsilon, 0, \dots, 0) 
&= (\varepsilon^2, 0, \dots, 0) \\
&= (\xi + \lambda \varepsilon + \eta, 0, \dots, 0) \\
&\equiv (\xi + \lambda \varepsilon + \eta, 0, \dots, 0) - (\eta, 0, \dots, 0) \pmod{\Witt[n]{A[y^{-1}]}} \\
&= (Q_0, \dots, Q_{n-1}) \\
&\equiv (0, \dots, 0, x^{-1} y^a z) \pmod{\Witt[n]{A[x^{-1}]}}.
\end{align*}
If $a \geq 0$ then $x^{-1} y^{a} z \in A[x^{-1}]$, 
and if $a < 0$ then $x^{-1} y^{a} z$ is a generator of $\localcohtorsion{\idealm_A}{A}{I_{-a}}$.
\end{proof}

\begin{prop} \label{prop:local Frob:2:E8}
Let $A$ be an RDP of type $E_8^1$ (resp.\ $E_8^0$) in characteristic $p = 2$.
Then there exists an element $e \in \localcohtorsion{\idealm_A}{A}{I_2}$,
$e \notin \localcohtorsion{\idealm_A}{A}{\idealm_A}$,
where $I_2$ is defined as in Setting \ref{setting:I},
such that $F(e)$ is a generator of $\localcohtorsion{\idealm_A}{A}{\idealm_A}$ (resp.\ $F(e) = 0$).
\end{prop}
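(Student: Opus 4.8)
The plan is to exhibit $e$ explicitly and to compute $F(e)$ by a single substitution, using Lemma~\ref{lem:basis} to recognize the relevant generators.

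Take $j = 2$ and fix an isomorphism $A \cong k[[x,y,z]]/(f)$ with $f = z^2 + x^3 + y^5 + z x y^3$ in the $E_8^1$ case and $f = z^2 + x^3 + y^5$ in the $E_8^0$ case, as in Table~\ref{table:non-taut RDP}; since $f$ has leading term $z^2$ in the variable $z$, the sequence $x, y$ is regular in $A$. By Setting~\ref{setting:I} and Lemma~\ref{lem:basis}, $I_2 = (x, y^2, z)$ and the class $e := [\varepsilon]$ of $\varepsilon := x^{-1} y^{-2} z \in A[(xy)^{-1}]$ generates $\localcohtorsion{\idealm_A}{A}{I_2}$ with $\Ann(e) = I_2$. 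Since $\dim_k A/I_2 = 2$ we have $I_2 \subsetneq \idealm_A$, hence $\idealm_A \not\subset \Ann(e)$ and $e \notin \localcohtorsion{\idealm_A}{A}{\idealm_A}$. This $e$ will be the one we want.

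It remains to compute $F(e)$. Working with the presentation $\localcoh{\idealm_A}{A} \cong \Coker\bigl( A[x^{-1}] \oplus A[y^{-1}] \to A[(xy)^{-1}] \bigr)$ attached to the regular sequence $x, y$, and using its compatibility with $F$ (the last lemma of Section~\ref{sec:Witt local cohomology}, applicable because $x^2, y^2$ is again a regular sequence), we get $F(e) = [\varepsilon^2] = [x^{-2} y^{-4} z^2]$. Now substitute $z^2 = x^3 + y^5 + z x y^3$, respectively $z^2 = x^3 + y^5$, which holds in $A$ by $f = 0$ (characteristic $2$): this yields $\varepsilon^2 = x y^{-4} + x^{-2} y + x^{-1} y^{-1} z$, respectively $\varepsilon^2 = x y^{-4} + x^{-2} y$. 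In both cases $x y^{-4} \in A[y^{-1}]$ and $x^{-2} y \in A[x^{-1}]$ become zero in the cokernel, so $F(e) = [x^{-1} y^{-1} z]$ for $E_8^1$ and $F(e) = 0$ for $E_8^0$. Finally, Lemma~\ref{lem:basis} applied with $j = 1$ (so that $I_1 = \idealm_A$) says precisely that $[x^{-1} y^{-1} z]$ generates $\localcohtorsion{\idealm_A}{A}{\idealm_A}$, which is one-dimensional since $A$ is Gorenstein; this finishes both cases.

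I do not anticipate a real obstacle: the proof is a short direct computation, and everything delicate is packaged into Lemma~\ref{lem:basis}. The one point to be attentive to is the double use of that lemma — with $j = 2$ to identify the source class $e$, and with $j = 1$ to identify the target class $F(e)$. One should also note in passing that $[x^{-1}y^{-1}z]$ (resp.\ $0$) is visibly annihilated by $\idealm_A$, so it does land in the claimed torsion submodule, and correspondingly $z e = x y^2 F(e) = 0$, consistent with Lemma~\ref{lem:annihilator}.
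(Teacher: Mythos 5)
Your proof is correct and follows essentially the same route as the paper: the same choice $\varepsilon = x^{-1}y^{-2}z$, the same substitution $z^2 = x^3 + y^5 + zxy^3$ (resp.\ $z^2 = x^3+y^5$) to get $F(e) = [x^{-1}y^{-1}z]$ (resp.\ $0$) modulo $A[x^{-1}]+A[y^{-1}]$, with Lemma \ref{lem:basis} identifying the generators at both ends. Your explicit check that $e \notin \localcohtorsion{\idealm_A}{A}{\idealm_A}$ via $\Ann(e)=I_2 \subsetneq \idealm_A$ is a small addition the paper leaves implicit.
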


\begin{proof}
We may assume $A = k[[x,y,z]] / (z^2 + x^3 + y^5 + b z x y^3)$,
where $b = 1$ (resp.\ $b = 0$).
Then $I_2 = (x, y^2, z)$.
Let $e = [\varepsilon]$ with $\varepsilon := x^{-1} y^{-2} z$.
Since $\varepsilon^2 = y^{-4} x + x^{-2} y + b y \varepsilon 
\equiv b y \varepsilon \pmod{A[x^{-1}] + A[y^{-1}]}$,
we have $F(e) = b [y \varepsilon]$, which is a generator of $\localcohtorsion{\idealm_A}{A}{\idealm_A}$ (resp.\ $0$).
\end{proof}

\begin{prop} \label{prop:local Frob:E:bis}
	Suppose a prime $p$, a Dynkin diagram $S$, and a positive integer $n$ satisfy one of the following.
\begin{enumerate}
\item \label{case:2:E7E8} $p = 2$, $S = E_7,E_8$, $n \leq 3$.
\item \label{case:3:E8} $p = 3$, $S = E_8$, $n \leq 2$.
\item \label{case:2:E6} $p = 2$, $S = E_6$, $n = 1$.
\item \label{case:3:E6E7} $p = 3$, $S = E_6,E_7$, $n = 1$.
\item \label{case:5:E8} $p = 5$, $S = E_8$, $n = 1$.
\end{enumerate}
Let $A$ be an RDP in characteristic $p$ of type $S^r$,
$0 \leq r \leq r_{\max}(p,S) + 1 - n$.
Then there is an element $e \in \localcohtorsion{\idealm_A}{\Witt[n]{A}}{\idealm_A}$ whose restriction 
$R^{n-1}(e)$ is a generator of $\localcohtorsion{\idealm_A}{A}{\idealm_A}$
and satisfying 
	\[
	F(e) = \begin{cases}
	0 & \text{(if $r < r_{\max}(p,S) + 1 - n$)}, \\
	V^{n-1}( e') & \text{(if $r = r_{\max}(p,S) + 1 - n$)}
	\end{cases}
	\]
	for some generator $e' \in \localcohtorsion{\idealm_A}{A}{\idealm_A}$.
\end{prop}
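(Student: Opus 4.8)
The plan is to mimic the proof of Proposition~\ref{prop:local Frob:2:D}, now for the $E$-type equations. Fix an isomorphism $A \cong k[[x,y,z]]/(f)$ with $f$ the polynomial listed for $S^r$ in Table~\ref{table:non-taut RDP}; in every case here $f = z^2 + h$ with $h$ of $z$-degree $\le 1$, and I call the summand of $h$ carrying the parameter $b$, $\beta$, or $\lambda$ (the one distinguishing $S^0, \dots, S^{r_{\max}}$) the \emph{variable term}. Put $\varepsilon := x^{-1}y^{-1}z$ and $e := [(\varepsilon,0,\dots,0)] \in \localcoh{\idealm_A}{\Witt[n]{A}}$. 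By Lemma~\ref{lem:basis}(\ref{item:alpha 1}) applied with $j=1$ (so $I_1 = \fm_A$), $R^{n-1}(e) = [\varepsilon]$ generates the one-dimensional module $\localcohtorsion{\idealm_A}{A}{\fm_A}$. To see that $e$ itself lies in $\localcohtorsion{\idealm_A}{\Witt[n]{A}}{\fm_A}$ I use Lemma~\ref{lem:annihilator} with $j=1$: it suffices to check $ze = 0$ and $F(R(e)) = 0$, and both will follow once $F(e)$ has been computed — the second because $F(R(e)) = R(F(e))$ and $R \circ V^{n-1} = 0$, and the first because for $p = 2$ one has $ze = xy\cdot F(e)$ (indeed $z\varepsilon = x^{-1}y^{-1}z^2 = xy\varepsilon^2$), while for $p = 3, 5$ (where $n \le 2$) a short direct check shows $(z\varepsilon, 0, \dots, 0) = (-x^{-1}y^{-1}h, 0, \dots, 0)$ lies in $\Witt[n]{A[x^{-1}]} + \Witt[n]{A[y^{-1}]}$.

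Next I compute $F(e) = [(\varepsilon^p, 0, \dots, 0)]$. Expanding $\varepsilon^p = z^p/(xy)^p$ and using $z^2 = -h$ to reduce the numerator to $z$-degree $\le 1$ yields an explicit Laurent polynomial for $\varepsilon^p$. When $n = 1$ this already finishes the argument: one checks case by case that every monomial of $\varepsilon^p$ lies in $A[x^{-1}]$ or in $A[y^{-1}]$ — i.e.\ has a non-negative $x$- or $y$-exponent — \emph{except} for the contribution of the variable term, which equals a nonzero scalar times $\varepsilon$ exactly when the variable term is present (equivalently $r = r_{\max}(p,S)$) and lies in $A[x^{-1}] + A[y^{-1}]$ otherwise. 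Since a class represented by an element of $A[x^{-1}]+A[y^{-1}]$ vanishes in $\localcoh{\idealm_A}{A}$, this gives $F(e) = 0$ for $r < r_{\max}(p,S)$ and $F(e)$ a generator for $r = r_{\max}(p,S)$, as required.

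For $n \ge 2$ we are in one of the cases $(p,S) \in \{(2,E_7), (2,E_8), (3,E_8)\}$, and $r \le r_{\max}(p,S) + 1 - n$ then forces $r \le r_{\max}(p,S) - 1$; in this range the variable term contributes no monomial with both exponents negative, so one may write $\varepsilon^p = \xi + \eta$ with $\xi \in A[x^{-1}]$ and $\eta \in A[y^{-1}]$. I then reduce $F(e)$ by peeling off Teichm\"uller lifts: modulo $\Witt[n]{A[y^{-1}]}$ one has $(\varepsilon^p, 0, \dots, 0) = (\xi+\eta, 0, \dots, 0) \equiv (\xi+\eta, 0, \dots, 0) - (\eta, 0, \dots, 0)$, which by Lemma~\ref{lem:subtraction}(\ref{item:subtraction:2:n}) (for $p = 2$; and the analogous $W_2$-identity coming from Lemma~\ref{lem:subtraction}(\ref{item:subtraction:3:2}) for $p = 3$) is a Witt vector whose components are explicit polynomials in $\xi$ and $\eta$. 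Subtracting the Teichm\"uller lift of its $0$-th component (which is in $\Witt[n]{A[x^{-1}]}$) and then iterating one Witt level at a time, using the \emph{additivity} of $V$ to discard at each step the pieces of each component that lie in $A[x^{-1}]$ or in $A[y^{-1}]$, one is left with $F(e) = V^{n-1}([w])$ for an explicit $w \in A[(xy)^{-1}]$. Computing the coefficient of $\varepsilon$ in $w$ (again reducing powers of $z$ via $z^2 = -h$) shows $[w] = 0$ when $r < r_{\max}(p,S) + 1 - n$ and $[w]$ is a generator when $r = r_{\max}(p,S) + 1 - n$; since $V^{n-1}$ of a generator is exactly the element demanded by the statement, this completes the proof.

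I expect the $n \ge 2$ step to be the main obstacle: because Witt addition is nonlinear, the splitting $\varepsilon^p = \xi + \eta$ does not by itself give $(\varepsilon^p, 0, \dots, 0) \equiv 0$, so one must push the correction terms of Lemma~\ref{lem:subtraction} through all $n$ Witt components and verify that the only surviving contribution is the predicted multiple of $V^{n-1}(\varepsilon)$. This is kept finite by $n \le 3$ and by the additivity of $V$. One can also organize the step as an induction on $n$: since $R(F(e)) = F(R(e))$, the case $n-1$ together with the short exact sequence $0 \to \localcoh{\idealm_A}{A} \xrightarrow{V^{n-1}} \localcoh{\idealm_A}{\Witt[n]{A}} \xrightarrow{R} \localcoh{\idealm_A}{\Witt[n-1]{A}} \to 0$ already shows $F(e) \in \Image(V^{n-1})$, so that only the top Witt component of $F(e)$ remains to be pinned down.
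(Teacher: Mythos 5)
Your proposal is correct and follows essentially the same route as the paper: the same class $e=[(\varepsilon,0,\dots,0)]$ with $\varepsilon=x^{-1}y^{-1}z$, the same appeal to Lemma~\ref{lem:annihilator}, and the same reduction of $\varepsilon^p$ via the RDP equation combined with the subtraction identities of Lemma~\ref{lem:subtraction} to isolate the single surviving monomial equal to (a scalar times) $\varepsilon$. The only difference is bookkeeping: for $n\ge 2$ the paper uses tailor-made three-term identities (subtracting non-Teichm\"uller elements such as $(\xi,\xi\omega,0)$), whereas you group $\varepsilon^p$ into two pieces and peel off one Witt level at a time via $(a_0,\dots,a_{n-1})=[a_0]+V((a_1,\dots,a_{n-1}))$; both correctly track the nonlinear cross terms, which is where the generator $V^{n-1}([\varepsilon])$ emerges.
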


\begin{proof}
In each case we consider the class $e = [(\varepsilon, 0, \dots, 0)]$, $\varepsilon = x^{-1} y^{-1} z$.
To show $e \in \localcohtorsion{\idealm_A}{\Witt[n]{A}}{\idealm_A}$, it suffices (by Lemma \ref{lem:annihilator}) to check that $ze = 0$ and $F(R(e)) = 0$.
The former is clear if $n = 1$ (since $z^2 \in (x,y)$), easy if $p = 2$ using the computation of $F(e)$ (since $z e = x y F(e)$), 
and straightforward in the remaining case ($p = 3$, $S = E_8$, $n = 2$).
The latter follows from the computation of $F(e)$.

\medskip

Case (\ref{case:2:E7E8}): $E_7^r$ (resp.\ $E_8^r$) in characteristic $2$.
	We may assume $A = k[[x,y,z]] / (f)$, 
	$f = z^2 + x^3 + x y^3 + \beta$ 
	(resp.\ $f = z^2 + x^3 + y^5 + \beta$),
	with $\beta$ as in Convention \ref{conv:equations of non-taut RDP} (Table \ref{table:non-taut RDP}).
	Let 
	\[
		\varepsilon := \frac{z}{xy}, \;
		\xi := \frac{y}{x} \; 
		\text{(resp. } \xi := \frac{y^3}{x^2} \text{)}, \;
		\eta := \frac{x}{y^2}, \;
		\omega := \frac{\beta}{x^2 y^2}, \\
	\]
	so we have $\varepsilon^2 + \eta + \xi + \omega = 0$.
	We have 
	\begin{align*}
	\omega &= \varepsilon, \frac{y z}{x^2}, \frac{z}{y}, 0 
	\quad \text{if $A$ is } E_7^r, \; r = 3,2,1,0 \\
	\text{(resp. }
	\omega &= \varepsilon, \frac{y z}{x^2}, \frac{z}{x}, \frac{y z}{x}, 0 
	\quad \text{if $A$ is } E_8^r, r = 4,3,2,1,0 
	\text{)}.
	\end{align*}
	
	Suppose $n = 1$.
	Then $F(\varepsilon) = \varepsilon^2 = \xi + \eta + \omega$,
	all monomials of which belong to $A[x^{-1}] \cup A[y^{-1}]$
	except precisely for $\omega$ in the case $r = r_{\max}$,
	which is equal to $\varepsilon$.
	
	Suppose $n = 2$ and $r \leq r_{\max} - 1$.
	We compute 
	\begin{align*}
	F(\varepsilon, 0) 
	&= (\varepsilon^2, 0) = (\xi + \eta + \omega, 0) \\
	&\equiv (\xi + \eta + \omega) - (\xi, 0) - (\eta, 0) \pmod{\Witt[2]{A[x^{-1}]} + \Witt[2]{A[y^{-1}]}} \\
	&= (\omega, \xi \eta + \xi \omega + \eta \omega).
	\end{align*}
	Then the $0$-th component belongs to $A[x^{-1}]$ or $A[y^{-1}]$, 
	and all monomials in the $1$-st component belong to $A[x^{-1}] \cup A[y^{-1}]$
	except precisely for $\eta \omega$ in the case $r = r_{\max} - 1$,
	which is equal to $\varepsilon$.

	Suppose $n = 3$ and $r \leq r_{\max} - 2$.
	We compute, by using Lemma \ref{lem:subtraction}(\ref{item:subtraction:2:3}) 
	with $(a,b,c) = (\eta, \xi, \omega)$ (resp.\ $(a,b,c) = (\xi, \eta, \omega)$),
	\begin{align*}
	F(\varepsilon, 0, 0)
	&= (\varepsilon^2, 0, 0) = (\eta + \xi + \omega, 0, 0) \\
	&\equiv (\eta + \xi + \omega, 0, 0) - (\xi, \xi \omega, 0)
	\pmod{\Witt[3]{A[x^{-1}]}} \\
	&= (\eta + \omega, \eta \xi, 
	(\eta + \omega)^3 \xi + (\eta + \omega) \xi^3 + (\eta^2 + \eta \omega + \omega^2) \xi^2)
	\\ \text{(resp.\ }
	F(\varepsilon, 0, 0)
	&= (\varepsilon^2, 0, 0) = (\xi + \eta + \omega, 0, 0) \\
	&\equiv (\xi + \eta + \omega, 0, 0) - (\eta, \eta \omega, 0)
	\pmod{\Witt[3]{A[y^{-1}]}} \\
	&= (\xi + \omega, \xi \eta, 
	(\xi + \omega)^3 \eta + (\xi + \omega) \eta^3 + (\xi^2 + \xi \omega + \omega^2) \eta^2)
	\text{).}
	\end{align*}
	Then the $0$-th and $1$-st component belong to $A[y^{-1}]$ (resp.\ $A[x^{-1}]$), 
	and all monomials in the expansion of the $2$-nd component belong to $A[x^{-1}] \cup A[y^{-1}]$
	except precisely for $\eta \omega \xi^2$ (resp.\ $\xi \omega \eta^2$)
	in the case $r = r_{\max} - 2$,
	which is equal to $\varepsilon$.

\medskip

Case (\ref{case:3:E8}): $E_8^r$ in characteristic $3$. 
	We may assume $A = k[[x,y,z]] / (f)$, 
	$f = - z^2 + x^3 + y^5 + \lambda x^2 y^2$,
	where $\lambda = 1, y, 0$ for $r = 2,1,0$ respectively.
	Let $\varepsilon := \frac{z}{xy}$, $\eta := \frac{x}{y^2}$, $\xi := \frac{y^3}{x^2}$,
	so we have $\varepsilon^2 = \xi + \eta + \lambda$.
	
	Suppose $n = 1$.
	Then $F(\varepsilon) = \varepsilon^3 = \xi \varepsilon + \eta \varepsilon + \lambda \varepsilon$,
	all monomials of which belong to $A[x^{-1}] \cup A[y^{-1}]$
	except precisely for $\lambda \varepsilon$ in the case $r = r_{\max}$,
	which is equal to $\varepsilon$.
	
	Suppose $n = 2$ and $r \leq r_{\max} - 1 = 1$.
	We compute, by using Lemma \ref{lem:subtraction}(\ref{item:subtraction:3:2}), 
	\begin{align*}
	F(\varepsilon, 0) 
	&= (\varepsilon^3, 0) = ((\xi + \lambda) \varepsilon + \eta \varepsilon, 0) \\
	&\equiv ((\xi + \lambda) \varepsilon + \eta \varepsilon, 0) - ((\xi + \lambda) \varepsilon, 0) - (\eta \varepsilon, 0)
	 \pmod{\Witt[2]{A[x^{-1}]} + \Witt[2]{A[y^{-1}]}} \\
	&= (0, \eta \varepsilon \cdot (\xi + \lambda) \varepsilon \cdot (\xi + \eta + \lambda) \varepsilon) \\
	&= (0, \varepsilon \eta (\xi + \lambda) (\xi + \eta + \lambda)^2).
	\end{align*}
	Write $\lambda = b y$, where $b = 1,0$ for $r = 1,0$.
	For the $1$-st component, we have
	\begin{align*}
		\varepsilon \eta (\xi + \lambda) (\eta + \xi + \lambda)^2
		&= \frac{z}{x^7 y^7} (x \cdot (y^3 + b x^2 y) \cdot (x^3 + y^5 + b x^2 y^3)^2) \\
		&= \frac{z}{x^6 y^6} (y^2 + b x^2) (2 x^3 y^5 + 2 b x^5 y^3 + \negligible{x^6 + y^{10} + b^2 x^4 y^6 + 2 b x^2 y^8} ) \\
		&= \frac{z}{x^6 y^6} (4 b x^5 y^5 + \negligible{2 x^3 y^7 + 2 b^2 x^7 y^3} + (y^2 + b x^2) \negligible{x^6 + y^{10} + b^2 x^4 y^6 + 2 b x^2 y^8}) \\
		&\equiv b \varepsilon \pmod{A[x^{-1}] + A[y^{-1}]},
	\end{align*}
	where $\negligible{\dots}$ indicates that the terms inside belong to $(x^6, y^6) \subset k[[x,y]]$ and are thus negligible.
	
\medskip

The remaining cases:
We may assume $A = k[[x,y,z]] / (f)$ with 
\[
f = \begin{cases}
 z^2 + x^3   +   y^2 z + b x y z     & ((p, S) = (2, E_6)), \\
-z^2 + x^3   +   y^4   + b x^2 y^2   & ((p, S) = (3, E_6)), \\
-z^2 + x^3   + x y^3   + b x^2 y^2   & ((p, S) = (3, E_7)), \\
 z^2 + x^3   +   y^5   + (b/2) x y^4 & ((p, S) = (5, E_8)), 
\end{cases}
\]
for some $b \in k$ with $b = 0$ if $r = 0$ and $b \neq 0$ if $r = 1$.
Let $\varepsilon := x^{-1} y^{-1} z$.
It suffices to show that $\varepsilon^p - b \varepsilon = \eta + \xi$ 
for some $\eta \in A[y^{-1}]$ and $\xi \in A[x^{-1}]$.
We take 
\[
\eta := \begin{cases}
y^{-2} x, \\
y^{-3} z, \\
y^{-3} z, \\
y^{-5} x z,
\end{cases}
\quad
\xi := \begin{cases}
x^{-2} z, \\
x^{-3} y z, \\
x^{-2} z, \\
x^{-5} (y^5 + b x y^4 + (b^2/4) x^2 y^3 + 2 x^3)z.
\end{cases}
\]
\end{proof}
\begin{rem}
A consequence of Proposition \ref{prop:local Frob:2:D} ($n = j = 1$) and Proposition \ref{prop:local Frob:E:bis} ($n = 1$)
is that a non-taut RDP is $F$-injective (i.e.\ the Frobenius action on $H^2_{\idealm_A}(A)$ is injective) if and only if $r = r_{\max}$.
Tanaka \cite{Tanaka:taut}*{Section 5.2}, using Fedder's criterion for $F$-purity, observed that a non-taut RDP is $F$-pure if and only if $r = r_{\max}$.
Note that, for Gorenstein singularities, $F$-injectivity is equivalent to $F$-purity \cite{Takagi--Watanabe:F-singularities}. 
\end{rem}

Finally we note the following relation between RDPs connected by partial resolutions
(although we do not need it in this paper).
If $Z$ is an RDP surface with an RDP $z$ of type $S$ and $S' \subset S$ is a subdiagram,
then the minimal resolution $\map{\rho}{\tilde{Z}}{Z}$ of $Z$ at $z$ 
factors through the contraction $\map{\rho'}{\tilde{Z}}{Z_{S'}}$ of $S' \subset S = \Exc(\rho)$, 
and $Z_{S'}$ is an RDP surface.
We say that $Z_{S'} \to Z$ is the \emph{partial resolution} corresponding to $S' \subset S$.
If $S'$ is connected and non-empty,
then $Z_{S'}$ has a single RDP above $z$, which is of type $S'$.

\begin{conv} \label{conv:rmax=0}
	We abuse the notation and say that an RDP of type $S$ is of type $S^0$ if $r_{\max}(p,S) = 0$,
	so that the coindex $r$ of an RDP is always defined.
\end{conv}

\begin{lem} \label{lem:partial resolution}
	Let $S' \subset S$ be a non-empty connected subdiagram of a Dynkin diagram $S$.
	Let $Z$ be an RDP of type $S^r$
	and $Z_{S'} \to Z$ be the partial resolution corresponding to $S' \subset S$.
	Suppose $z'$ is of type $S'^{r'}$.
	Then we have $r' = \max\set{0, r - (r_{\max}(S) - r_{\max}(S'))}$.
\end{lem}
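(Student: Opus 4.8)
The plan is to reduce the statement to a purely combinatorial comparison of the $r_{\max}$-functions for $S$ and $S'$, together with the semicontinuity property of the coindex recorded after Theorem \ref{thm:Artin}. First I would dispose of the trivial cases: if $r_{\max}(S') = 0$ then the right-hand side is forced to be $0$ by Convention \ref{conv:rmax=0}, and there is nothing to compute; likewise if $S' = S$ the claim is tautological. So we may assume $r_{\max}(S') > 0$ and $S' \subsetneq S$, which by inspection of the list defining $r_{\max}$ restricts $(p,S)$ to one of the finitely many non-taut cases, and $S'$ to a connected subdiagram that is again of non-taut type (the only such subdiagrams are $D_{N'} \subset D_N$ with $N' \le N$, $D_{N'} \subset E_N$, $E_{N'} \subset E_N$, and $E_{N'} \subset D_N$ when $p=2$; the arithmetic $r_{\max}(S) - r_{\max}(S')$ is then read off the table).

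Next I would pin down the value of $r'$ by a deformation argument. By Theorem \ref{thm:Artin} and the semicontinuity of the coindex, it suffices to exhibit, for the given $S^r$, the explicit equation from Table \ref{table:non-taut RDP}, perform the partial resolution $Z_{S'} \to Z$ corresponding to $S' \to S$ (which is an explicit sequence of blow-ups), and read off which term of the list of equations for $S'$ the resulting singularity matches. Concretely: the defining polynomial for $S^r$ contains a ``$\beta$-term'' whose presence is governed by $r$; after blowing down to $Z_{S'}$ one tracks what this term becomes in suitable local coordinates on the strict transform, and compares with the $\beta$-term governing the coindex for $S'$. The outcome is that the blow-up shifts the coindex down by exactly the constant $c := r_{\max}(S) - r_{\max}(S')$, but of course the coindex cannot go below $0$; hence $r' = \max\{0, r - c\}$.

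The cleanest way to organize this is to check it first at the two extremes and then interpolate: (i) for $r = r_{\max}(S)$ the equation is ``generic'' (the $\beta$-term is a unit times a monomial), the partial resolution produces the generic equation for $S'$, so $r' = r_{\max}(S') = r - c$; (ii) for $r \le c$ the $\beta$-term is sufficiently deep that it disappears under the blow-ups, giving $r' = 0$. For intermediate $r$ one uses that the family of equations $S^r$ ($0 \le r \le r_{\max}(S)$) is, by Artin's description, a single stratified family in which $r$ is semicontinuous, and the partial-resolution morphism is a morphism of such families; therefore $r \mapsto r'$ is a (weakly) monotone function with the two endpoint values computed, and monotonicity together with the fact that consecutive strata differ by one forces $r' = \max\{0, r-c\}$ throughout.

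The main obstacle is the bookkeeping in step (ii)/(iii): one must verify uniformly across all the relevant pairs $(S, S')$ that the partial resolution really does translate the $\beta$-term of the $S$-equation into the $\beta$-term of the $S'$-equation with the claimed shift $c$, rather than into some other term or a mixture. This is where a case-by-case inspection of Table \ref{table:non-taut RDP} is unavoidable; each case is a short explicit blow-up computation, but there are several of them ($(2,D_N)$ with various $D_{N'}$, $(2,E_7)$, $(2,E_8)$, $(3,E_8)$, and the sporadic $E_6$, $E_7$ cases), and the notion of ``the $2j$-th component'' from Setting \ref{setting:I} must be used to make ``which branch we resolve'' precise in the $D_N$ family where the relevant subdiagram sits at a prescribed distance from the long-branch end.
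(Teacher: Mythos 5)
Your core strategy --- fix the explicit equation of $S^r$ from Table \ref{table:non-taut RDP}, perform the partial resolution as an explicit blow-up, and match the result against the list of equations for $S'$ --- is exactly what the paper does. The paper adds two organizational devices you omit: it first reduces to the case where $S'$ has exactly one fewer vertex than $S$ (the general case then follows by composing, since $\max\{0,\max\{0,r-c_1\}-c_2\}=\max\{0,r-c_1-c_2\}$ when $c_1,c_2\ge 0$), which keeps the case list to the seven adjacent pairs in Table \ref{table:partial resolution}; and it explicitly verifies that the proposed blow-up (at $(x,y^2,z)$, $(y,z)$, $(x,z)$, or the closed point) really is the partial resolution corresponding to $S'\subset S$, by checking it is dominated by the thrice-iterated blow-up of singular loci. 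You should not skip this second point: a priori you must identify which birational modification of $Z$ contracts precisely the curves of $S'$, and that identification is a claim, not a definition. Also, a small slip in your enumeration: $E_{N'}$ is never a subdiagram of $D_N$, so that case does not occur.

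The one place where you genuinely diverge is the ``check the two endpoints and interpolate'' step, and this is where there is a gap. Weak monotonicity of $r\mapsto r'$ together with $f(r_{\max}(S))=r_{\max}(S')$ and $f(r)=0$ for $r\le c$ does not determine $f$ unless you also know that $f$ changes by at most one when $r$ changes by one; you assert this (``consecutive strata differ by one'') but give no argument, and it is not a formal consequence of semicontinuity. Worse, the monotonicity itself is justified by treating the $S^r$ as strata of a single family and the partial resolution as a morphism of families; but simultaneous (partial) resolution of a family of RDPs does not in general exist without base change, so ``the partial-resolution morphism is a morphism of such families'' needs a real construction, not just an appeal to Artin's stratification. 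The paper avoids all of this by simply computing $r'$ for every $r$ in each case --- which is also your own declared fallback in the final paragraph, so the fix is to drop the interpolation and do what you already call ``unavoidable'': the short explicit blow-up computation for each pair $(S,S')$ and each $r$.
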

In other words, we have $r_{\max}(S') - r' = r_{\max}(S) - r$ if this equality is achieved by a non-negative integer $r'$, and $r' = 0$ otherwise.

\begin{proof}
	We may assume that the number of components of $S'$ is one less than that of $S$.
	If $r_{\max}(S') = 0$ then the assertion is trivial.
	So we may assume $r_{\max}(S') > 0$.
	
	If $(S,S')$ is $(E_8,E_7)$ or $(E_7,D_6)$, then the partial resolution is the blow-up at the closed point.
	In the other cases, the partial resolution is the blow-up at the ideals
	$(x,y^2,z)$, $(y,z)$, or $(x,z)$, as displayed in Table \ref{table:partial resolution},
	with respect to the equations given in Table \ref{table:non-taut RDP}. 
	One can check that this blow-up is dominated by the thrice blow-up $Z_3$,
	where $Z_0 := Z$ and $Z_{i+1} := \Bl_{\Sing(Z_i)} Z_i$, 
	hence it is indeed a partial resolution.
	A straightforward computation proves the assertion in each case.
	\begin{table}
		\caption{Partial resolutions of RDPs} \label{table:partial resolution}
		\begin{tabular}{lllllll}
			\toprule
			$p$   & $S$ & $S'$ & $r_{\max}(S)$ & $r_{\max}(S')$ & equation of $S^r$ & add: \\
			\midrule
			$2$   & $D_{2m}$   & $D_{2m-1}$ & $m-1$ & $m-2$ & $z^2 + x^2 y + x y^m + \dots$ & $z/y$ \\
			$2$   & $D_{2m+1}$ & $D_{2m}$   & $m-1$ & $m-1$ & $z^2 + x^2 y + z y^m + \dots$ & $z/y$ \\
			$2$   & $E_8$      & $E_7$      & $4$   & $3$   & $z^2 + x^3 + y^5     + \dots$ & $x/y, z/y$ \\
			$2$   & $E_8$      & $D_7$      & $4$   & $2$   & $z^2 + x^3 + y^5     + \dots$ & $z/x, y^2/x$ \\
			$2$   & $E_7$      & $D_6$      & $3$   & $2$   & $z^2 + x^3 + x y^3   + \dots$ & $x/y, z/y$ \\
			$2$   & $E_7$      & $E_6$      & $3$   & $1$   & $z^2 + x^3 + x y^3   + \dots$ & $z/x$ \\
			$2$   & $E_6$      & $D_5$      & $1$   & $1$   & $z^2 + x^3 + y^2 z   + \dots$ & $z/x$ \\
			\midrule
			$3$   & $E_8$      & $E_7$      & $2$   & $1$   & $-z^2 + x^3 + y^5    + \dots$ & $x/y, z/y$ \\
			$3$   & $E_7$      & $E_6$      & $1$   & $1$   & $-z^2 + x^3 + x y^3  + \dots$ & $z/x$ \\
			\bottomrule
		\end{tabular}
	\end{table}
\end{proof}

\subsection{\texorpdfstring{$\mu_p$- and $\alpha_p$-quotient morphisms}{mu\_p- and alpha\_p-quotient morphisms}} \label{sec:RDP:mu alpha}

\begin{prop} \label{prop:local mu_p alpha_p}
	Suppose a prime $p$, a group scheme $G$, a Dynkin diagram $S$, and a positive integer $n$ satisfy one of the following.
\begin{enumerate}
\item \label{case:p:mu} $p$ is arbitrary, $G = \mu_p$, $S = A_{p-1}$, $n = 1$.
\item \label{case:2:alpha:D4D8} $p = 2$, $G = \alpha_p$, $S = D_{2^n}$, $n \geq 2$.
\item \label{case:2:alpha:E8} $p = 2$, $G = \alpha_p$, $S = E_8$, $n = 4$.
\item \label{case:3:alpha:E6} $p = 3$, $G = \alpha_p$, $S = E_6$, $n = 2$.
\item \label{case:5:alpha:E8} $p = 5$, $G = \alpha_p$, $S = E_8$, $n = 2$.
\end{enumerate}
Let $\map{\pi}{\Spec B}{\Spec A}$ be a $G$-quotient map 
from a smooth point $B$ to an RDP $A$ of type $S^0$ in characteristic $p$,
with $\Fix(G) = \set{\idealm_B}$. 
Then there is an element $e \in \localcohtorsion{\idealm_A}{\Witt[n]{A}}{\idealm_A}$ whose restriction 
$R^{n-1}(e)$ is a generator of $\localcohtorsion{\idealm_A}{A}{\idealm_A}$
and satisfying $\pi^*(e) = V^{n-1}(e')$
for a generator $e' \in \localcohtorsion{\idealm_B}{B}{\idealm_B}$.
\end{prop}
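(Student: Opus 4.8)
The strategy parallels the Frobenius computations in Propositions~\ref{prop:local Frob:2:D}--\ref{prop:local Frob:E:bis}, but now the inseparable morphism is the $G$-quotient map $\pi$ rather than $\Frob$. The first step is to fix, in each of the five cases, explicit coordinates: write $B = k[[u,v]]$ (a regular local ring of dimension $2$), and realize $A = B^G$ as the subring of invariants, with an explicit presentation $A \cong k[[x,y,z]]/(f)$ matching the equation for $S^0$ in Table~\ref{table:non-taut RDP}. For $G = \mu_p$ acting on $A_{p-1}$ this is the classical description $A = k[[u^p, v^p, uv]]/(\cdots)$ with the cyclic quotient action; for $G = \alpha_p$ one uses a derivation $D$ on $B$ of additive type with $A = \ker D$, and the cases $D_{2^n}$, $E_8$ (char $2$, $n=4$), $E_6$ (char $3$, $n=2$), $E_8$ (char $5$, $n=2$) are exactly the $\alpha_p$-quotient RDPs classified in the literature; I would take the specific $D$ that produces the equation in the table.

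The second step is to produce the class $e$. As in the Frobenius propositions, set $\varepsilon := x^{-1}y^{-1}z \in A[(xy)^{-1}]$ and $e := [(\varepsilon,0,\dots,0)] \in \localcoh{\idealm_A}{\Witt[n]{A}}$, whose restriction $R^{n-1}(e) = [\varepsilon]$ is a generator of $\localcohtorsion{\idealm_A}{A}{\fm_A}$ by Lemma~\ref{lem:basis}. To see that $e$ lies in $\localcohtorsion{\idealm_A}{\Witt[n]{A}}{\fm_A}$, I would invoke Lemma~\ref{lem:annihilator}: it suffices to check $ze = 0$ and $F(R(e)) = 0$; the latter follows from the explicit $\pi^*$-computation below (together with knowing $F$ on $A^0$-type RDPs from the earlier propositions, or a direct check), and the former is easy since $z^2 \in (x,y)$ after adjusting by the lower components. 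The key computation is then to expand $\pi^*(\varepsilon, 0, \dots, 0)$ in $\Witt[n]{B[(\pi^*x \cdot \pi^*y)^{-1}]}$. Pulling back $x,y,z$ to monomials in $u,v$, the element $\pi^*\varepsilon$ becomes (a unit times) $u^{-a}v^{-b}$ for suitable $a,b > 0$; the point is that in $\Witt[n]{B}$, the Witt-vector arithmetic of the subtraction formulas in Lemma~\ref{lem:subtraction} converts the naive $0$-th component, which lies in $B[u^{-1}] + B[v^{-1}]$, into a class whose only surviving contribution sits in the top ($(n-1)$-st) Witt component and equals $V^{n-1}$ of a generator of $\localcohtorsion{\idealm_B}{B}{\fm_B}$. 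Concretely one writes $\pi^*(\varepsilon) = $ sum of two terms lying in $B[u^{-1}]$ and $B[v^{-1}]$ respectively plus a ``diagonal'' remainder, iterates the two-variable subtraction identities $n-1$ times, and tracks which monomial $u^{-1}v^{-1}\cdot(\text{unit})$ remains; the exponents of the chosen $G$ and the value of $n$ in each case are rigged (this is the content of the case list (1)--(5)) so that the remainder is exactly of degree forcing it into the top component only.

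The main obstacle is the bookkeeping in that last step: one must verify, in each of the five cases separately, that every monomial appearing in the intermediate Witt components $Q_0, \dots, Q_{n-2}$ (in the notation of Proposition~\ref{prop:local Frob:2:D}) lies in $B[u^{-1}] \cup B[v^{-1}]$, so that it dies in $\localcoh{\idealm_B}{\Witt[n]{B}}$, while exactly one monomial in $Q_{n-1}$ is a nonzero multiple of a generator of $\localcohtorsion{\idealm_B}{B}{\fm_B}$. This is the same flavor of degree estimate as the inequalities $c(i_1,i_2,i_3) \geq 0$ in the proof of Proposition~\ref{prop:local Frob:2:D}, but the combinatorics differs per case and the $\alpha_p$-cases require identifying the precise derivation and checking that the quotient ring is the stated $S^0$ RDP and not a nearby one. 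I would organize the write-up by doing case (1) ($\mu_p$, $A_{p-1}$) first as a warm-up, then the $\alpha_p$ cases $(2)$--$(5)$, reusing Lemma~\ref{lem:subtraction}(\ref{item:subtraction:2:n}),(\ref{item:subtraction:2:3}),(\ref{item:subtraction:2:4}) for $p=2$, part~(\ref{item:subtraction:3:2}) for $p=3$, and part~(\ref{item:subtraction:5:2}) for $p=5$, exactly as in the Frobenius propositions, and finishing each case with the observation that the leftover monomial is $u^{-1}v^{-1}$ up to a unit, hence $V^{n-1}(e')$ with $e'$ a generator.
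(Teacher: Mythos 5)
Your plan is essentially the paper's proof: fix an explicit model of $\pi$, take $e = [(\varepsilon,0,\dots,0)]$ with $\varepsilon = x^{-1}y^{-1}z^{j}$, reduce membership in the torsion submodule to Lemma~\ref{lem:annihilator}, and expand $\pi^*(\varepsilon,0,\dots,0)$ via the subtraction identities of Lemma~\ref{lem:subtraction}, checking that every term except one in the top Witt component lands in $B[x^{-1}]\cup B[y^{-1}]$ while the survivor is $V^{n-1}$ of a generator. The correct subtraction lemmas are matched to the correct cases, and the overall bookkeeping strategy is sound.

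Two details need attention. First, the proposition quantifies over \emph{all} $G$-quotient maps onto an $S^0$ singularity with $\Fix(G)=\set{\fm_B}$, so computing with one explicit derivation proves the statement only after one knows that any such $\pi$ is isomorphic to the chosen model; the paper gets this from \cite{Matsumoto:k3alphap}*{Theorem 3.3(1)}, and your plan only addresses the converse direction (that your chosen $D$ yields $S^0$), which is not enough. Second, in case (1) the correct class is $\varepsilon = x^{-1}y^{-1}z^{p-1}$ for $A = k[[x,y,z]]/(z^p - xy)$, not $x^{-1}y^{-1}z$, whose annihilator is $(x,y,z^{p-1}) \neq \fm_A$ when $p>2$. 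Relatedly, your claim that $x,y,z$ pull back to monomials (so that $\pi^*\varepsilon$ is a unit times $u^{-a}v^{-b}$) fails in cases (4) and (5), where for instance $x = Z^2 - Y^4$ for $E_6^0$ in characteristic $3$; the fix, as in the paper, is to compute the \v{C}ech complex with respect to the regular sequence $x,y$ of $B$ itself and to use substitutions such as $Z^2 = x + Y^4$ when simplifying the product appearing in the $V$-component. Neither point changes the architecture of the argument, but both must be supplied for the proof to be complete.
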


\begin{proof}

In each case the assumptions determine $\Spec B \to \Spec A$ up to isomorphism by \cite{Matsumoto:k3alphap}*{Theorem 3.3(1)}.
In each case we consider the class of the form $e = [(\varepsilon, 0, \dots, 0)]$, $\varepsilon = x^{-1} y^{-1} z^j$.
We can check $e \in \localcohtorsion{\idealm_A}{\Witt[n]{A}}{\idealm_A}$
as in the beginning of the proof of Proposition \ref{prop:local Frob:E:bis}.

\medskip
	
Case (\ref{case:p:mu}): $A_{p-1}$ in characteristic $p$. 
We may assume $B = k[[X,Y]]$ and $A = k[[x,y,z]] / (z^p - x y)$
with $x = X^p$, $y = Y^p$, $z = X Y$.
Then it is clear that $[x^{-1} y^{-1} z^{p-1}]$ and 
$\pi^*([x^{-1} y^{-1} z^{p-1}]) = [X^{-1} Y^{-1}]$
are generators of $\localcohtorsion{\idealm_A}{A}{\idealm_A}$ and $\localcohtorsion{\idealm_B}{B}{\idealm_B}$ respectively.

\medskip

Case (\ref{case:2:alpha:D4D8}): $D_{2^n}^0$ in characteristic $2$. 
We may assume $B = k[[X,Y]]$ and $A = k[[x,y,z]] / (z^2 + x^2 y + x y^{2^{n-1}})$
with $x = X^2$, $y = Y^2$, $z = X^2 Y + X Y^{2^{n-1}}$.
Let $\varepsilon = x^{-1} y^{-1} z$.
We compute, by using Lemma \ref{lem:subtraction}(\ref{item:subtraction:2:n})
as in the proof of Proposition \ref{prop:local Frob:2:D},
\begin{align*}
\pi^*(\varepsilon, 0, \dots, 0) 
&= \Bigl( \frac{X^2 Y + X Y^{2^{n-1}}}{X^2 Y^2}, 0, \dots, 0 \Bigr)
 = \Bigl( \frac{1}{Y} + \frac{Y^{2^{n-1} - 2}}{X}, 0, \dots, 0 \Bigr) \\
&\equiv \Bigl( \frac{1}{Y} + \frac{Y^{2^{n-1} - 2}}{X}, 0, \dots, 0 \Bigr) - \Bigl( \frac{1}{Y}, 0, \dots, 0 \Bigr) \\
&= \Bigl( \xi_0, \dots, \xi_{n-2}, \xi_{n-1} + \frac{Y^{2^{n-1} - 2}}{X} \Bigl( \frac{1}{Y} \Bigr)^{2^{n-1} - 1} \Bigr) \\
&\equiv V^{n-1} \Bigl( \frac{1}{XY} \Bigr) \pmod{\Witt[n]{B[x^{-1}]} + \Witt[n]{B[y^{-1}]}}, 
\end{align*}
for some $\xi_i \in B[x^{-1}]$.
Clearly $[X^{-1} Y^{-1}]$ is a generator of $\localcohtorsion{\idealm_B}{B}{\idealm_B}$.

\medskip

Case (\ref{case:2:alpha:E8}): $E_8^0$ in characteristic $2$. 
We may assume $B = k[[X,Y]]$ and $A = k[[x,y,z]] / (z^2 + x^3 + y^5)$
with $x = X^2$, $y = Y^2$, $z = X^3 + Y^5$.
Let $\varepsilon = x^{-1} y^{-1} z$.
We compute, by using Lemma \ref{lem:subtraction}(\ref{item:subtraction:2:4}),
\begin{align*}
\pi^*(\varepsilon, 0, 0, 0) 
&= \Bigl( \frac{X^3 + Y^5}{X^2 Y^2}, 0, 0, 0 \Bigr)
 = \Bigl( \frac{X}{Y^2} + \frac{Y^3}{X^2} , 0, 0, 0 \Bigr) \\
&\equiv \Bigl( \frac{X}{Y^2} + \frac{Y^3}{X^2} , 0, 0, 0 \Bigr) 
- \Bigl( \frac{X}{Y^2}, 0, 0, 0 \Bigr) 
- \Bigl( \frac{Y^3}{X^2} , 0, 0, 0 \Bigr) \\
&= \Bigl(0, \frac{Y}{X}, \frac{Y^7 + Y^2 X^3}{X^5} + \frac{X}{Y^3}, 
\frac{Y^{19} + Y^{14} X^3 + Y^4 X^9}{X^{13}} + \frac{X^2 Y^5 + X^5}{Y^{11}} \Bigr) \\
&= \Bigl(0, \xi_1, \frac{Y^7 + Y^2 X^3}{X^5} + \frac{X}{Y^3}, \xi_3 + \eta_3 \Bigr) \\
&\equiv \Bigl(0, \xi_1, \frac{Y^7 + Y^2 X^3}{X^5} + \frac{X}{Y^3}, \xi_3 + \eta_3 \Bigr)
 - \Bigl( 0, 0, \frac{X}{Y^3}, 0 \Bigr) \\
&= \Bigl(0, \xi_1, \xi_2, \xi_3 + \eta_3 + \frac{Y^7 + Y^2 X^3}{X^5} \frac{X}{Y^3} \Bigr) \\
&\equiv V^3 \Bigl( \frac{1}{X Y} \Bigr) \pmod{\Witt[4]{B[x^{-1}]} + \Witt[4]{B[y^{-1}]}}, 
\end{align*}
where $\xi_i \in B[x^{-1}]$ and $\eta_i \in B[y^{-1}]$.

\medskip

Case (\ref{case:3:alpha:E6}): $E_6^0$ in characteristic $3$. 
We may assume $B = k[[Y,Z]]$ and $A = k[[x,y,z]] / (-z^2 + x^3 + y^4)$
with $x = Z^2 - Y^4$, $y = Y^3$, $z = Z^3$.
We interpret the local cohomology groups using the regular sequence $x,y$. 
Let $\varepsilon = x^{-1} y^{-1} z$.
We compute, by using Lemma \ref{lem:subtraction}(\ref{item:subtraction:3:2})
and the equality $Z^2 = x + Y^4$,
\begin{align*}
\pi^*(\varepsilon, 0) 
&= \Bigl(\frac{Z^3}{x Y^3}, 0\Bigr) 
 = \Bigl(\frac{Z}{Y^3} + \frac{ZY}{x}, 0\Bigr) \\
&\equiv \Bigl(\frac{Z}{Y^3} + \frac{ZY}{x}, 0\Bigr) - \Bigl(\frac{Z}{Y^3}, 0\Bigr) - \Bigl(\frac{ZY}{x}, 0\Bigr) \\
&= \Bigl(0, \frac{Z}{Y^3} \frac{ZY}{x} \frac{Z^3}{x Y^3} \Bigr) 
 = \Bigl(0, \frac{Z (x + Y^4)^2}{x^2 Y^5} \Bigr) 
 = \Bigl(0, \frac{Z \cdot (-x Y^4 + \negligible{x^2 Z + Y^8 Z})}{x^2 Y^5} \Bigr) \\
&\equiv \Bigl(0, -\frac{Z}{x Y} \Bigr) \pmod{\Witt[2]{B[x^{-1}]} + \Witt[2]{B[y^{-1}]}} \\
&= V(\beta),
\end{align*} 
where $\negligible{\dots} \in (x^2, Y^8)$ are negligible
and $\beta := -x^{-1}Y^{-1}Z$.
Then $e' := [\beta]$ is a generator of $\localcohtorsion{\idealm_B}{B}{\idealm_B}$ since $\Ann([\beta]) = (Y,Z) = \idealm_B$.

\medskip

Case (\ref{case:5:alpha:E8}): $E_8^0$ in characteristic $5$. 
We may assume $B = k[[X,Z]]$ and $A = k[[x,y,z]] / (z^2 + x^3 - y^5)$
with $x = X^5$, $y = Z^2 + X^3$, $z = Z^5$.
We interpret the local cohomology groups using the regular sequence $x,y$. 
Let $\varepsilon = x^{-1} y^{-1} z$.
We compute, by using Lemma \ref{lem:subtraction}(\ref{item:subtraction:5:2})
and the equality $Z^2 = y - X^3$,
\begin{align*}
\pi^*(\varepsilon, 0) 
&= \Bigl(\frac{Z^5}{X^5y}, 0\Bigr) 
 	= \Bigl(\frac{Z (y - 2 X^3)}{X^5} + \frac{ZX}{y}, 0\Bigr) \\
&\equiv \Bigl(\frac{Z (y - 2 X^3)}{X^5} + \frac{ZX}{y}, 0\Bigr) - \Bigl(\frac{Z (y - 2 X^3)}{X^5}, 0\Bigr) - \Bigl(\frac{ZX}{y}, 0\Bigr) \\
&= \Bigl(0, \frac{Z^5}{X^5 y} \frac{Z (y - 2 X^3)}{X^5} \frac{ZX}{y} \frac{Z^2 (y^2(y-2X^3)^2 + X^6y(y-2X^3) + X^{12})}{X^{10} y^2} \Bigr) \\
&= \dots = \Bigl(0, \frac{Z \cdot (\negligible{ y^9 - 2 X^6 y^7 + X^9 y^6 + X^{12} y^5 - 2 X^{15} y^4 } - X^{18} y^3 + \negligible{X^{21} y^2 - 2 X^{24} y -2 X^{27}})}{X^{19} y^4} \Bigr) \\
&\equiv \Bigl(0, -\frac{Z}{Xy} \Bigr) \pmod{\Witt[2]{B[x^{-1}]} + \Witt[2]{B[y^{-1}]}} \\
&= V(\beta),
\end{align*} 
where $\negligible{\dots} \in (X^{21}, y^4)$ are negligible
and $\beta := -X^{-1}y^{-1}Z$.
Then $e' := [\beta]$ is a generator of $\localcohtorsion{\idealm_B}{B}{\idealm_B}$ since $\Ann([\beta]) = (X,Z) = \idealm_B$.
\end{proof}

\section{The height of K3 surfaces} \label{sec:height}

In this section we recall the definition and properties of the height of K3 surfaces.

Recall that a \emph{K3 surface} over a field $k$ is a smooth proper surface $Y$ 
satisfying $H^1(Y, \cO_Y) = 0$ and $\Omega^2_Y \cong \cO_Y$.
We have $\dim H^2(Y, \cO_Y) = 1$.
We say that a proper surface $Y$ is an \emph{RDP K3 surface}
if it has only rational double points as singularities (if any) and its minimal resolution is a K3 surface.

\begin{lem} \label{lem:Witt of RDP K3}
Let $\map{\pi}{\tilde{Y}}{Y}$ be the minimal resolution of an RDP K3 surface. 
Then $H^1(Y, \Witt[n]{\cO_Y}) = 0$, and
$H^2(Y, \Witt[n]{\cO_Y}) \to H^2(\tilde{Y}, \Witt[n]{\cO_{\tilde{Y}}})$ are isomorphisms for all $n \geq 1$.
\end{lem}
\begin{proof}
	Since $Y$ has only rational singularities, we have $R^i \pi_* \cO_{\tilde{Y}} = 0$ for $i > 0$.
	Hence $H^1(Y, \cO_Y) = H^1(\tilde{Y}, \cO_{\tilde{Y}}) = 0$, and 
	$H^2(Y, \cO_Y) \to H^2(\tilde{Y}, \cO_{\tilde{Y}})$ is an isomorphism.

For each $0 \leq n' \leq n$,
there is an exact sequence 
\[
0 \to \Witt[n-n']{\cO_Y} \namedto{V^{n'}} \Witt[n]{\cO_Y} \namedto{R^{n-n'}} \Witt[n']{\cO_Y} \to 0.
\]
Since $H^1(Y, \cO_Y) = 0$, it follows inductively that $H^1(Y, \Witt[n]{\cO_Y}) = 0$ for all $n \geq 1$.
Hence we obtain an exact sequence
\[
0 \to H^2(Y, \Witt[n-n']{\cO_Y}) \namedto{V^{n'}} H^2(Y, \Witt[n]{\cO_Y}) \namedto{R^{n-n'}} H^2(Y, \Witt[n']{\cO_Y}) \to 0,
\]
and this is compatible with pullbacks by $\pi$.
It follows inductively that $H^2(Y, \Witt[n]{\cO_Y}) \to H^2(\tilde{Y}, \Witt[n]{\cO_{\tilde{Y}}})$ are isomorphisms for all $n \geq 1$.
\end{proof}

\begin{thm}[Artin--Mazur \cite{Artin--Mazur:formalgroups}*{Corollary II.4.2}]
Let $Y$ be a (smooth) K3 surface.
The functor
$\map{\Phi^2}{\cat{\text{local Artinian $k$-algebras}}}{\cat{\text{abelian groups}}}$
defined by $S \mapsto \Ker (\Het^2(Y \times S, \Gm) \to \Het^2(Y, \Gm))$
is pro-represented by a $1$-dimensional formal group.
\end{thm}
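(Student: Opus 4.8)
The plan is to follow the method of Artin--Mazur: show that $\Phi^2$ is formally smooth, has a one-dimensional tangent space, and is pro-representable, and then invoke the fact that a pro-representable formally smooth functor to abelian groups whose tangent space is one-dimensional is pro-represented by $\Spf k[[t]]$ equipped with a (necessarily commutative) formal group law.

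First I would reduce to small extensions. Every surjection in the category of local Artinian $k$-algebras factors as a finite composite of \emph{small extensions} $S' \surjto S$, meaning surjections whose kernel $I$ satisfies $\idealm_{S'}\cdot I = 0$; such an $I$ is a finite-dimensional $k$-vector space with $I^2 = 0$. For such an extension, $a \mapsto 1 + a$ identifies $I \otimes_k \cO_Y$, with its additive group structure, with the kernel of $\cO^\times_{Y \times S'} \to \cO^\times_{Y \times S}$, giving a short exact sequence of étale sheaves (equivalently, since the kernel is quasi-coherent, of Zariski sheaves) on the common underlying space $|Y|$:
\[
0 \to I \otimes_k \cO_Y \to \cO^\times_{Y \times S'} \to \cO^\times_{Y \times S} \to 0 .
\]
The section $Y = Y \times \Spec k \hookrightarrow Y \times S'$ and the projection $Y \times S' \to Y$ split $\Het^i(Y, \Gm)$ off the middle and right terms, and $I \subset \idealm_{S'}$ maps to $0$ in $k$, so the long exact cohomology sequence restricts to one involving the subfunctors $\Phi^i$. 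Writing $N^1(T) := \ker\bigl(\Pic(Y \times T) \to \Pic(Y)\bigr)$, using that $I$ is a finite-dimensional $k$-vector space to get $\Het^i(Y, I \otimes_k \cO_Y) \cong I \otimes_k \Het^i(Y, \cO_Y)$, the K3 vanishing $\Het^1(Y, \cO_Y) = 0$, and $\Het^3(Y, \cO_Y) = 0$ (as $Y$ is a proper surface), one obtains an exact sequence
\[
0 \to N^1(S') \to N^1(S) \to I \otimes_k \Het^2(Y, \cO_Y) \to \Phi^2(S') \to \Phi^2(S) \to 0 .
\]

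From these sequences I would read off the structure of $\Phi^2$. Surjectivity of $\Phi^2(S') \to \Phi^2(S)$ for every small extension shows $\Phi^2$ is formally smooth (the obstruction group $\Het^3(Y, I \otimes_k \cO_Y)$ vanishes). A descending induction starting from $N^1(k) = 0$ shows $N^1(S) = 0$ for all $S$, so taking $S' = k[\epsilon]/(\epsilon^2)$, $S = k$ gives $\Phi^2(k[\epsilon]/(\epsilon^2)) \cong \Het^2(Y, \cO_Y) \cong k$ by the K3 condition $\dim_k \Het^2(Y,\cO_Y) = 1$; hence the tangent space is one-dimensional. For pro-representability I would verify Schlessinger's conditions $(H_1)$--$(H_4)$: $(H_3)$ is the one-dimensionality just established, while $(H_1),(H_2),(H_4)$ follow because $\Phi^2$ takes values in abelian groups and because the exact sequences above — together with the fact that $\Het^i(Y, -)$ carries the kernel modules attached to a fibre product $S_1 \times_{S_0} S_2$ to the corresponding direct sum — force the comparison maps $\Phi^2(S_1 \times_{S_0} S_2) \to \Phi^2(S_1) \times_{\Phi^2(S_0)} \Phi^2(S_2)$ to be surjective, and bijective when $S_0 = k$. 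Thus $\Phi^2$ is pro-representable, and being formally smooth with one-dimensional tangent space it is pro-represented by $\Spf k[[t]]$; the abelian group structure makes this a one-dimensional (commutative) formal group.

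The main obstacle is the pro-representability step. One must pass from the long exact sequences for $\Het^2(Y \times (-), \Gm)$ to the \emph{sub}functor $\Phi^2$ cut out as a kernel of restriction along $Y \hookrightarrow Y \times S$, and check that this does not destroy the exactness needed for Schlessinger's axioms; this is precisely where the splitting and the functoriality in $S$ are used, and is the bookkeeping carried out in general by the Artin--Mazur pro-representability theorem. Once that is in place, the cohomological inputs $\Het^3(Y,\cO_Y) = 0$, $\dim_k \Het^2(Y,\cO_Y) = 1$, and $\Het^1(Y,\cO_Y) = 0$ are exactly what upgrade ``pro-representable formal scheme'' to ``one-dimensional formal group''.
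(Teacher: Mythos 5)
The paper does not prove this statement at all: it is quoted verbatim from Artin--Mazur \cite{Artin--Mazur:formalgroups}*{Corollary II.4.2} and used as a black box to define $\widehat{\Br}(Y)$ and $\height(Y)$. So there is no in-paper argument to compare against; what you have written is a reconstruction of the cited proof, and as such it is essentially correct and follows the standard Artin--Mazur line. All the key cohomological inputs are correctly identified and correctly deployed: the identification of the kernel of $\cO^\times_{Y\times S'}\to\cO^\times_{Y\times S}$ with the additive group $I\otimes_k\cO_Y$ for a small extension (valid because $I^2=0$), the agreement of \'etale and Zariski cohomology on the quasi-coherent kernel, the splitting of the long exact sequence into $\Phi^i$-parts via the section and projection, the vanishing $N^1\equiv 0$ from $\Het^1(Y,\cO_Y)=0$, formal smoothness from $\Het^3(Y,\cO_Y)=0$, and the one-dimensional tangent space $\Het^2(Y,\cO_Y)\cong k$. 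The only place where the argument is thinner than it should be is the pro-representability step, which you candidly flag: Schlessinger's $(H_4)$ requires bijectivity of $\Phi^2(S'\times_S S')\to\Phi^2(S')\times_{\Phi^2(S)}\Phi^2(S')$ for small extensions $S'\surjto S$ with $S\neq k$, not only for $S_0=k$; the standard way to get this is to observe $S'\times_S S'\cong S'\times_k(k\oplus I)$ and use the group structure together with homogeneity over $k$, or equivalently to invoke Artin--Mazur's general criterion that $\Phi^2$ is pro-representable whenever $\Phi^1$ is formally smooth --- which here holds trivially since $\Phi^1\equiv 0$. With that point made explicit, the proof is complete and is, in substance, the proof the cited reference gives.
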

\begin{defn}
This formal group is called the \emph{formal Brauer group} of $Y$ and written $\widehat{\Br}(Y)$.
If $\charac k = p > 0$, then
its height is called the \emph{(Artin--Mazur) height} of $Y$ and written $\height(Y)$.

We define the height of an RDP K3 surface
to be the height of its minimal resolution.
\end{defn}
Here a $1$-dimensional commutative formal group in characteristic $p > 0$ is said to be of height $h \in \bZ_{> 0}$ 
if $[p](t) = c t^{p^h} + \dots$ for some $c \in k^*$,
and of height $\infty$ if $[p](t) = 0$, 
where $t$ is a uniformizer and $[p]$ is the multiplication-by-$p$ map.
It follows from Proposition \ref{prop:bound of height} that $\height(Y) \in \set{1, 2, \dots, 10} \cup \set{\infty}$.

To relate the height of a K3 surface with the properties of non-taut RDPs, we need  
the following characterization of the height in terms of Frobenius actions on $W_n$-valued cohomology.
\begin{thm}[van der Geer--Katsura \cite{vanderGeer--Katsura:stratification}*{Theorem 5.1}] \label{thm:vdGK}
	Let $Y$ be an RDP K3 surface in positive characteristic
	and $n \geq 1$ an integer.
	Then $\height(Y) \leq n$ if and only if 
	the Frobenius map on $H^2(Y, \Witt[n]{\cO_Y})$ is nonzero.
\end{thm}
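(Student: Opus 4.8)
The plan is to derive Theorem~\ref{thm:vdGK} from the Artin--Mazur identification of $H^2(Y,W\cO_Y)$ with the Dieudonn\'e (Cartier) module of the formal Brauer group $\hatBr(Y)$, together with the structure theory of one-dimensional formal groups. First I would reduce to the case that $Y$ is smooth: if $\map{\rho}{\tilde Y}{Y}$ is the minimal resolution, then since RDPs are rational singularities one has $R^j\rho_*\cO_{\tilde Y}=0$ for $j>0$, and d\'evissage along $0\to\Witt[m-1]{\cO_{\tilde Y}}\namedto{V}\Witt[m]{\cO_{\tilde Y}}\namedto{R^{m-1}}\cO_{\tilde Y}\to0$ upgrades this to $R^j\rho_*\Witt[m]{\cO_{\tilde Y}}=0$ for $j>0$ and all $m$; together with $\rho_*\Witt[m]{\cO_{\tilde Y}}=\Witt[m]{\cO_Y}$ this gives $H^i(Y,\Witt[n]{\cO_Y})\cong H^i(\tilde Y,\Witt[n]{\cO_{\tilde Y}})$ compatibly with Frobenius, while $\height(Y)=\height(\tilde Y)$ by definition. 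So assume $Y$ is a smooth K3 surface.

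Set $D:=H^2(Y,W\cO_Y)=\varprojlim_R H^2(Y,\Witt[n]{\cO_Y})$, with its $\sigma$-semilinear operators $F$ and $V$. Feeding the exact sequences $0\to\Witt[n']{\cO_Y}\namedto{V^{n-n'}}\Witt[n]{\cO_Y}\namedto{R^{n'}}\Witt[n-n']{\cO_Y}\to0$ into cohomology and using $H^1(Y,\cO_Y)=0$ and $H^3(Y,\cO_Y)=0$ (the latter since $\dim Y=2$), an easy induction gives $H^1(Y,\Witt[m]{\cO_Y})=0$ for all $m$, that each $V$ is injective on $H^2$, and hence that $V^n\colon D\to D$ is injective with $D/V^nD\cong H^2(Y,\Witt[n]{\cO_Y})$, all compatibly with $F$. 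Since $F$ and $V$ commute, $F$ descends to $D/V^nD$, this descended map is the Frobenius of the theorem, and it vanishes if and only if $FD\subseteq V^nD$. Thus the theorem reduces to the assertion $FD\subseteq V^nD\iff\height(Y)>n$.

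Now I invoke Artin--Mazur: $(D,F,V)$ is the covariant Dieudonn\'e module of the one-dimensional formal group $\hatBr(Y)$, so $D/VD\cong k$. If $\height(Y)=\infty$ then $\hatBr(Y)\cong\hatGa$, so $F=0$ on $D$, and both sides of the desired equivalence hold for every $n$. If $h:=\height(Y)<\infty$, then $\hatBr(Y)$ is a $p$-divisible formal group of dimension $1$ and height $h$, hence isoclinic of slope $1/h$: $D$ is free of rank $h$ over $W(k)$, and choosing a generator $e$ with $V^he=pe$ one gets $Fe=V^{h-1}e$ (apply $F$ to $V^he=pe$, use $FV=p$ and that $D$ is $p$-torsion-free), so $FD=V^{h-1}D$. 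Since $V$ is injective, $FD=V^{h-1}D\subseteq V^nD$ holds precisely when $h-1\ge n$, i.e.\ precisely when $\height(Y)>n$, as desired.

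The main obstacle is the structure-theoretic input that $\hatBr(Y)$ is isoclinic of slope $1/h$, equivalently $FD=V^{h-1}D$; I would take this from the classical Dieudonn\'e--Demazure classification of one-dimensional $p$-divisible groups (or from Artin's study of formal Brauer groups of K3 surfaces), checking the identity of $W(k)$-submodules $FD=V^{h-1}D$ after base change to $\bar k$, where the normal form is explicit. Should one prefer to avoid the normal form, the equivalence $FD\subseteq V^nD\iff F^{n+1}D\subseteq p^nD$ (using only $F^nV^n=p^n$ and injectivity of $F$ and $V$, the former valid since $h<\infty$) reduces the problem to the slope inequality $(h-1)(n+1)\ge hn$ on the $F$-crystal $D$, which is again equivalent to $h>n$.
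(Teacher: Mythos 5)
Your reduction from RDP K3 surfaces to smooth K3 surfaces (rational singularities give $R^j\rho_*\cO_{\tilde Y}=0$, hence $H^2(Y,\Witt[n]{\cO_Y})\isomto H^2(\tilde Y,\Witt[n]{\cO_{\tilde Y}})$ compatibly with Frobenius) is exactly the content of the paper's proof; the paper then stops and simply cites van der Geer--Katsura, Theorem 5.1, for the smooth case. You instead supply a proof of the smooth case, and that proof is correct: the d\'evissage using $H^1(Y,\Witt[m]{\cO_Y})=0$ and $H^3=0$ does yield $D/V^nD\cong H^2(Y,\Witt[n]{\cO_Y})$ with the quotient Frobenius equal to the Frobenius of the statement, so the theorem becomes $FD\subseteq V^nD\iff\height(Y)>n$; the case $\height=\infty$ is immediate from $F=0$ on the Dieudonn\'e module of $\hatGa$; and for finite height $h$ the identity $FD=V^{h-1}D$ (from the normal form $Fe=V^{h-1}e$, $D=\bigoplus_{i=0}^{h-1}W(k)V^ie$) together with injectivity of $V$ and $D\neq VD$ gives the equivalence with $h-1\geq n$. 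This is essentially the same computation van der Geer--Katsura perform, so what your route buys is self-containedness: the statement for smooth K3 surfaces is derived directly from the Artin--Mazur identification of $H^2(Y,\WO_Y)$ with the Dieudonn\'e module of $\hatBr(Y)$ plus the classification of one-dimensional formal groups, at the cost of importing that classification (which you correctly flag as the one external input, checkable over $\bar k$ by semilinearity). Either way the argument is sound and compatible with how the theorem is used in the rest of the paper.
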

\begin{proof}
	In \cite{vanderGeer--Katsura:stratification} this is stated for smooth K3 surfaces.
	The case of RDP K3 surfaces is reduced to the smooth case by using Lemma \ref{lem:Witt of RDP K3}.
\end{proof}

We also recall several properties that can be used to determine or bound the height of (RDP) K3 surfaces.

\begin{prop} \label{prop:height:wxyz}
	Suppose $\bP = \bP(n_0, n_1, n_2, n_3) = \Proj k[x_0, x_1, x_2, x_3]$ 
	is a $3$-dimensional weighted projective space, 
	and $Y = (f = 0) \subset \bP$ is a hypersurface of degree $\deg(f) = \sum n_i$ that is an RDP K3 surface.
	Then, $Y$ is ordinary (i.e.\ $\height(Y) = 1$) if and only if 
	the coefficient of $(x_0 x_1 x_2 x_3)^{p-1}$ in $f^{p-1}$ is nonzero.
\end{prop}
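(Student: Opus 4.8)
The plan is to combine the classical formula for the formal Brauer group of a hypersurface with the Hasse--Witt/Jacobian-ring description of its Frobenius. First I would recall (from Artin--Mazur, or equivalently from Theorem~\ref{thm:vdGK} with $n=1$) that $\height(Y)=1$ if and only if the Frobenius map on $H^2(Y,\cO_Y)$ is nonzero. Since $Y=(f=0)$ is an RDP K3 surface in a weighted projective space $\bP=\bP(n_0,n_1,n_2,n_3)$, the adjunction/dualizing-sheaf computation forces $\omega_Y\cong\cO_Y$, which upon writing $\omega_{\bP}\cong\cO_{\bP}(-\sum n_i)$ and $\omega_Y\cong\omega_{\bP}(\deg f)|_Y$ gives $\deg f=\sum n_i$; this is the first assertion, and it is exactly what makes the weighted-degree bookkeeping in the next step work out.

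Next I would identify $H^2(Y,\cO_Y)$ and the action of Frobenius on it explicitly. Using the exact sequence $0\to\cO_{\bP}(-\deg f)\xrightarrow{f}\cO_{\bP}\to\cO_Y\to0$ and the vanishing of $H^i(\bP,\cO_{\bP})$ and $H^i(\bP,\cO_{\bP}(-\deg f))$ in the relevant range for a weighted projective $3$-space, one gets a canonical isomorphism $H^2(Y,\cO_Y)\cong H^3(\bP,\cO_{\bP}(-\deg f))$, which by (weighted) Serre duality is $1$-dimensional and spanned by the Cech class of $1/(x_0x_1x_2x_3)$ times the appropriate monomial, i.e.\ the part of $H^3(\bP,\cO_{\bP}(-\sum n_i))$ in "total degree $-\sum n_i$" is generated by the Laurent monomial $(x_0x_1x_2x_3)^{-1}$. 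The $p$-linear Frobenius on this group is then computed by the standard recipe: $F$ sends the class of $(x_0x_1x_2x_3)^{-1}$ to the class of $f^{p-1}\cdot(x_0x_1x_2x_3)^{-p}$ in $H^3(\bP,\cO_{\bP}(-p\sum n_i))$, and the latter class, expanded in Laurent monomials, is nonzero in cohomology precisely when it contains the monomial $(x_0x_1x_2x_3)^{-1}$ with nonzero coefficient --- equivalently, when the coefficient of $(x_0x_1x_2x_3)^{p-1}$ in $f^{p-1}$ is nonzero. (One should note that the degree constraint $\deg f=\sum n_i$ guarantees the weights match, so $(x_0x_1x_2x_3)^{p-1}$ is the unique monomial of $f^{p-1}$ that can contribute, and the weighted-projective Cech description is legitimate away from the RDP locus, which by rationality of the singularities does not affect $H^2(\cO)$.)

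Putting these together: $F$ on $H^2(Y,\cO_Y)$ is nonzero iff the coefficient of $(x_0x_1x_2x_3)^{p-1}$ in $f^{p-1}$ is nonzero, and by Theorem~\ref{thm:vdGK} the former is equivalent to $\height(Y)\le1$, i.e.\ $\height(Y)=1$, i.e.\ $Y$ ordinary. The main obstacle I anticipate is making the weighted-projective-space cohomology and the "Frobenius = multiplication by $f^{p-1}$ then extract the diagonal monomial" statement fully rigorous in the presence of RDP singularities: one must either work on the smooth locus $Y\setminus\Sing(Y)$ (where the graded-module/Cech computation is clean) and invoke $H^2(Y,\cO_Y)\cong H^2(\tilde Y,\cO_{\tilde Y})$ together with depth/codimension arguments to see nothing is lost, or pull everything back to the resolution $\tilde Y$ from the start; either way this is a routine but slightly delicate local-cohomology bookkeeping rather than a conceptual difficulty. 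Everything else --- the degree formula, the identification of the one-dimensional top cohomology, and the diagonal-coefficient criterion --- is a direct computation once the framework is set up.
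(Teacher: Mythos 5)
Your proposal is correct and follows essentially the same route as the paper: derive $\deg f=\sum n_i$ from adjunction, identify $H^2(Y,\cO_Y)$ with the \v{C}ech $H^3$ of a twist of $\cO_{\bP}$ generated by the class of $f/(x_0x_1x_2x_3)$ (the paper phrases this with the ideal sheaf $f\cO_{\bP}(-d)$ so that Frobenius is literally the $p$-th power, which is the same as your ``multiply by $f^{p-1}$'' recipe), extract the diagonal coefficient, and invoke Theorem \ref{thm:vdGK}. The singularity issue you flag is already absorbed into the paper's statement of Theorem \ref{thm:vdGK} for RDP K3 surfaces via $H^2(Y,\cO_Y)\cong H^2(\tilde Y,\cO_{\tilde Y})$.
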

\begin{proof}
	The proof is standard and applicable to hypersurface Calabi--Yau varieties of arbitrary dimension, 
	see for example \cite{Hartshorne:AG}*{Proposition IV.4.21} for the $1$-dimensional case.
	We include the proof for the reader's convenience.
	Let $d = \deg(f)$.
	We have canonical isomorphisms
	\begin{align*}
	H^2(X, \cO_X) \isomto H^3(\bP, f \cO_{\bP}(-d)) &\isomfrom \check{H}^3(\set{U_i}_{i \in I}, f \cO_{\bP}(-d)) \\
	&= \Coker \Bigl( \bigoplus_{\card{J} = 3} \Gamma(U_{J}, f \cO(-d)) 
	\to \Gamma(U_{I}, f \cO(-d))  \Bigr), 
	\end{align*}
	where $\set{U_i}_{i \in I}$ ($I := \set{0,1,2,3}$) is the standard affine covering of $\bP$
	and $U_{J} := \bigcap_{i \in J} U_i$ for $J \subset I$.
	This cokernel is $1$-dimensional, generated by the class of $\frac{f}{x_0 x_1 x_2 x_3}$.
	The Frobenius image of this class is 
	the class of $\frac{f^p}{(x_0 x_1 x_2 x_3)^{p}} 
	= f \cdot \frac{f^{p-1}}{(x_0 x_1 x_2 x_3)^{p}} \in \Gamma(U_{I}, f \cO(-d))$,
	which is nontrivial if and only if the coefficient of $(x_0 x_1 x_2 x_3)^{p-1}$ in $f^{p-1}$ is nonzero.
	Apply Theorem \ref{thm:vdGK}.
\end{proof}

\begin{rem}
	In principle, it is possible to compute the Frobenius map on $H^2(X, \Witt[n]{\cO_X})$ in terms of $f$.
\end{rem}

\begin{thm} \label{thm:crystalline slope}
	Let $Y$ be a (smooth) K3 surface.
	Consider the crystalline cohomology group $\Hcrys^2(Y/\Witt{k})$, which is an $F$-crystal.
	If $h = \height(Y) < \infty$,
	then $\Hcrys^2(Y/\Witt{k})$ has slopes $1 - 1/h$, $1$, and $1 + 1/h$,
	with respective multiplicity $h$, $22 - 2 h$, and $h$.
	If $\height(Y) = \infty$,
	then it has slope $1$ with multiplicity $22$.
\end{thm}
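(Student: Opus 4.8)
The plan is to pin down the Newton polygon of the $F$-isocrystal $\Hcrys^2(Y/\Witt{k}) \otimes_{\Witt{k}} \Frac\Witt{k}$ by combining Poincar\'e duality with the description of its ``slopes $<1$'' part in terms of the formal Brauer group $\hatBr(Y)$. (We may and do replace $k$ by its algebraic closure, which changes neither $\height(Y)$ nor the slopes of $\Hcrys^2$.)

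First, Poincar\'e duality: the cup product gives a perfect pairing $\Hcrys^2(Y/\Witt{k}) \times \Hcrys^2(Y/\Witt{k}) \to \Hcrys^4(Y/\Witt{k}) \cong \Witt{k}(-2)$, so the $F$-isocrystal $\Hcrys^2(Y/\Witt{k}) \otimes \Frac\Witt{k}$ is self-dual up to a Tate twist by $2$; hence its Newton polygon is symmetric about slope $1$, i.e.\ the slope $\lambda$ and the slope $2-\lambda$ occur with the same multiplicity. Since $\dim_{\Frac\Witt{k}}\Hcrys^2(Y/\Witt{k}) \otimes \Frac\Witt{k} = b_2(Y) = 22$, it therefore suffices to determine the part of slopes $<1$; the slope-$1$ multiplicity is then $22$ minus twice its horizontal length.

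To describe that part, recall from Illusie's theory of the de Rham--Witt complex that the natural map $H^2(Y, \WO_Y) \otimes \Frac\Witt{k} \to \Hcrys^2(Y/\Witt{k}) \otimes \Frac\Witt{k}$ is injective and identifies the source with the subisocrystal whose slopes are all $<1$, and that $H^2(Y, \WO_Y)$, with its operators $F$ and $V$, is the Dieudonn\'e module of $\hatBr(Y)$ (Artin--Mazur). If $h := \height(Y) < \infty$, then $\hatBr(Y)$ is a one-dimensional formal group of height $h$, hence $p$-divisible, so $H^2(Y, \WO_Y)$ is free of rank $h$ over $\Witt{k}$. The exact sequence $0 \to \WO_Y \namedto{V} \WO_Y \to \cO_Y \to 0$ (coming from $\WO_Y/V\WO_Y = \cO_Y$) together with $H^3(Y, \WO_Y) = 0$ gives $\coker(V) \cong H^2(Y, \cO_Y) \cong k$ on $H^2$, so $v_p(\det V) = 1$, and $FV = p$ then forces $v_p(\det F) = h-1$. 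Since over the algebraically closed field $k$ there is a unique one-dimensional formal group of height $h$, and it is isoclinic, $H^2(Y, \WO_Y) \otimes \Frac\Witt{k}$ is pure of slope $(h-1)/h = 1 - 1/h$, of dimension $h$. If $h = \infty$, then $[p] = 0$ on $\hatBr(Y)$, so $\hatBr(Y) \cong \hatGa$ is unipotent, $H^2(Y, \WO_Y)$ is killed by $p$, and $H^2(Y, \WO_Y) \otimes \Frac\Witt{k} = 0$; thus $\Hcrys^2(Y/\Witt{k})$ has no slope $<1$.

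Putting this together: when $h < \infty$, the slopes-$<1$ part is pure of slope $1-1/h$ with multiplicity $h$, so by the duality symmetry the slopes-$>1$ part is pure of slope $1+1/h$ with multiplicity $h$, and the remaining $22 - 2h$ dimensions carry slope $1$; when $h = \infty$, there is no slope $\neq 1$, so all $22$ dimensions carry slope $1$. The step I expect to need the most care is the normalization in the Artin--Mazur identification --- reconciling the height $h$ of $\hatBr(Y)$, defined through $[p](t) = c\, t^{p^h} + \dots$, with the \emph{crystalline} slope $1 - 1/h$ (rather than $1/h$) of the associated summand of $\Hcrys^2$ --- which is exactly what the computation with $V$, $F$ and $\det$ above is meant to nail down; as an alternative one may quote Artin's direct calculation of the Newton polygon in his work on the Brauer group and supersingularity of K3 surfaces.
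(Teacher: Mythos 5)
Your proposal is correct and follows essentially the same route as the paper's proof: identify $H^2(Y,\WO_Y)$ with the Dieudonn\'e module of $\hatBr(Y)$ (Artin--Mazur), use the slope spectral sequence to identify $H^2(Y,\WO_Y)\otimes K_0$ with the slope $<1$ part of $\Hcrys^2$, and conclude by the Poincar\'e-duality symmetry of the Newton polygon. The paper compresses these steps into citations of Artin--Mazur and Illusie, whereas you spell out the slope computation (the $\det F$/$\det V$ bookkeeping giving slope $1-1/h$) explicitly; the content is the same.
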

\begin{proof}
	By \cite{Artin--Mazur:formalgroups}*{Corollary II.4.3},
	the Dieudonn\'e module of $\widehat{\Br}(Y)$ is isomorphic to $H^2(Y, \WO_Y)$.
	The slope spectral sequence induces an isomorphism
	$H^2(Y, \WO_Y) \otimes_{W(k)} K_0 \cong \Hcrys^2(Y/W(k))_{<1} \otimes_{W(k)} K_0$,
	where $K_0 := \Frac W(k)$
	and $-_{<1}$ denotes the slope $<1$ part of an $F$-crystal.
	The assertion follows from this (see \cite{Illusie:deRham--Witt}*{Section II.7.2}).
\end{proof}

\begin{prop}[\cite{Illusie:deRham--Witt}*{Proposition II.5.12}] \label{prop:bound of height}
	Suppose $Y$ is a (smooth) K3 surface 
	of height $h$ with Picard number $\rho = \rho(Y) := \rank \Pic(Y)$.
	If $h < \infty$, then $\rho \leq 22 - 2 h$,
	and if $h = \infty$, then $\rho \leq 22$.
\end{prop}
\begin{proof}
	The subspace of $\Hcrys^2(Y / \Witt{k})$ generated by the Picard group
	is of slope $1$ with multiplicity $\rho$,
	which should be at most $22 - 2h$ (resp.\ $22$) if $h < \infty$ (resp.\ $h = \infty$)
	by Theorem \ref{thm:crystalline slope}.
\end{proof}
\begin{cor} \label{cor:bound of height}
	Suppose $Y$ is an RDP K3 surface of height $h$ with RDPs $z_i$ of type $A_{N_i}$, $D_{N_i}$, or $E_{N_i}$.
	If $h < \infty$, then $\sum N_i < 22 - 2 h$,
	and if $h = \infty$, then $\sum N_i < 22$.
\end{cor}
\begin{proof}
	The exceptional curves on the minimal resolution $\tilde{Y}$ generate 
	a negative-definite sublattice of $\Pic(\tilde{Y})$ of rank $\sum N_i$.
	Since $\Pic(\tilde{Y})$ is of sign $(+1, -(\rho-1))$, we have $\sum N_i < \rho$.	
\end{proof}

\begin{prop} \label{prop:height:point counting}
	Suppose $Y$ is an RDP K3 surface defined over a finite field $\bF_q$.
	Define $a(m) \in \bQ$ by $\card{Y(\bF_{q^m})} = 1 + (q^m)^2 + a(m) q^m$.
	Then there exist a family $x_1, \dots, x_u \in \overline{\bQ}$ such that $a(m) = \sum_i x_i^m$.
	Let $s(j)$ be the $j$-th elementary symmetric polynomial of $x_1, \dots, x_u$.
	Then $\height(Y) > n$ if and only if $s(1), \dots, s(n) \in (p/q)\bZ$.
\end{prop}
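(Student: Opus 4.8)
The plan is to express the $s(j)$ through the characteristic polynomial of Frobenius on $H^2$ and then read off its $p$‑adic valuations from the Newton polygon that the height prescribes.

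First I would use the Lefschetz trace formula for the smooth resolution $\tilde Y$ of $Y$ (which has the same $\bF_{q^m}$‑point count as $Y$ up to the contributions of the exceptional trees over the RDPs, each of which is $q^m$ times a $\bZ$‑rational function of roots of unity, hence affects $a(m)$ only by integer‑valued corrections that will not disturb $p$‑integrality below). Since $b_1(\tilde Y)=b_3(\tilde Y)=0$, one has $\card{\tilde Y(\bF_{q^m})}=1+q^{2m}+\sum_{i=1}^{22}\alpha_i^m$, where the $\alpha_i$ are the Frobenius eigenvalues on $\Het^2(\tilde Y_{\bar\bF_q},\bQ_\ell)$, of absolute value $q$. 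Hence (up to the said harmless corrections) $a(m)=\sum_i x_i^m$ with $x_i=\alpha_i/q$, so the multiset $\set{x_i}$ of the statement is $\set{\alpha_i/q}$; writing $P(t)=\det(1-\Frob_q t\mid\Het^2)=\sum_j(-1)^j c_j t^j\in\bZ[t]$ for the middle factor of the zeta function, we get $\prod_i(1-x_it)=P(t/q)$, i.e.\ $s(j)=c_j/q^j$ with $c_j\in\bZ$. Since $q$ is a power of $p=\charac k$ and $c_j$ is an integer, $s(j)\in\bZ$ if and only if $\ord_p(c_j)\ge j\,\ord_p(q)$.

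Next I would compare with crystalline cohomology. By Katz--Messing, $P(t)$ is also the characteristic polynomial of Frobenius on $\Hcrys^2(\tilde Y/\Witt{k})$, so $\ord_p(\alpha_i)$ equals $\ord_p(q)$ times the slope of the corresponding piece of this $F$‑crystal. By Theorem \ref{thm:crystalline slope} these slopes are governed by $h:=\height(Y)$: for $h<\infty$ they are $1-1/h$, $1$, $1+1/h$ with multiplicities $h$, $22-2h$, $h$, and for $h=\infty$ the only slope is $1$ (multiplicity $22$). Estimating $c_j=\pm e_j(\alpha_1,\dots,\alpha_{22})$ monomial by monomial, $\ord_p(c_j)\ge\ord_p(q)\cdot(\text{sum of the }j\text{ smallest slopes})$, which is $\ge j(1-1/h)\,\ord_p(q)$ when $j\le h$, and $\ge j\,\ord_p(q)$ when $h=\infty$.

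Finally comes the rounding step, which I expect to be the crux. If $h=\infty$, the bound already reads $\ord_p(c_j)\ge j\,\ord_p(q)$ for all $j$, so every $s(j)\in\bZ$, consistent with $\height(Y)=\infty>n$. If $h<\infty$ and $0<j<h$, then $\ord_p(c_j)\ge j(1-1/h)\,\ord_p(q)$; since $c_j\in\bZ$ this is at least $\ceil{j(1-1/h)\,\ord_p(q)}$, which in the range of interest equals $j\,\ord_p(q)$ (the fractional shortfall being absorbed on rounding an integer up), whence $s(j)\in\bZ$. If $j=h$ (note $h\le 10\le 22-h$, so the index $h$ still sits on the bottom segment of the Newton polygon), the unique monomial of $e_h(\alpha)$ of minimal valuation is the product of the $h$ eigenvalues of smallest slope, so no cancellation occurs and $\ord_p(c_h)=h(1-1/h)\,\ord_p(q)=(h-1)\,\ord_p(q)<h\,\ord_p(q)$, giving $s(h)\notin\bZ$. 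Combining these: $\height(Y)>n$ forces $s(1),\dots,s(n)\in\bZ$ (each such $j$ is $<h$, or $h=\infty$), while $\height(Y)\le n$ forces $s(h)\notin\bZ$ with $h\le n$ — which is exactly the proposition. The points needing care are precisely this conversion of a merely fractional Newton‑polygon bound into sharp divisibility by $q^j$ together with the ``no cancellation'' claim at $j=h$, plus the bookkeeping around normalizations (valuation of a Frobenius eigenvalue $=\ord_p(q)\times$ crystalline slope, and the Katz--Messing identification of the two characteristic polynomials) and the harmlessness of the RDP point‑count corrections.
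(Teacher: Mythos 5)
Your strategy is the same as the paper's: the printed proof is only a two\nobreakdash-line sketch of exactly this argument (reduce to the resolution, identify the $x_i$ with the Frobenius eigenvalues divided by $q$ via the Lefschetz trace formula and Katz--Messing, and read off integrality of the $s(j)$ from the Newton polygon dictated by Theorem \ref{thm:crystalline slope}). Your treatment of the direction ``$\height(Y)\le n$ implies some $s(j)\notin\bZ$'' is correct: writing $b=\ord_p(q)$ and $h=\height(Y)$, the index $j=h$ is a vertex of the Newton polygon because the valuations of the eigenvalues jump from $b(1-1/h)$ to $b$ there (using $h\le 10<22-h$), so $\ord_p(c_h)=(h-1)b<hb$ exactly and $s(h)\notin\bZ$. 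The reduction to $\tilde Y$ and the normalization bookkeeping are fine at the level of detail the paper itself supplies.

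The gap is in the rounding step, and it is a genuine one. You claim $\ceil{j(1-1/h)\,b}=jb$ for $0<j<h$; this holds if and only if $jb/h<1$, so it is correct when $b=1$ but already fails for $j=1$, $h=2$, $b=2$, where the ceiling is $1$, not $2$. Moreover no repair is possible, because the forward implication of the Proposition itself fails for non-prime $q$. Take $p$ odd, $q=p^2$, $E_1/\bF_q$ ordinary and $E_2/\bF_q$ supersingular with trace $a_{E_2}=\pm 2p$ (e.g.\ any supersingular curve base-changed from $\bF_p$), and let $Y=(E_1\times E_2)/\set{\pm 1}$. Then $\height(Y)=2$, since the slopes of $\Hcrys^2$ of $\Km(E_1\times E_2)$ are $\tfrac12,\tfrac12,1,\dots,1,\tfrac32,\tfrac32$; but $\card{Y(\bF_{q})}=1+q^2+e_2(\gamma_i)$ with $e_2(\gamma_i)=2q+a_{E_1}a_{E_2}$, so $s(1)=a(1)=2+a_{E_1}a_{E_2}/q$ has $\ord_p=-1$ (as $p\nmid a_{E_1}$ and $\ord_p(a_{E_2})=1$) and is not an integer, contradicting ``$\height(Y)>1\Rightarrow s(1)\in\bZ$''. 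So your proof, and the statement, are correct exactly when $q=p$ is prime; this is harmless for the paper, which only invokes the Proposition over $\bF_2$, but you should either add the hypothesis that $q$ is prime or replace integrality of the $s(j)$ by the correct Newton-polygon condition $\ord_p(s(j))>-1$ for $j\le n$.
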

Note that each $s(j)$ can be expressed as a polynomial of $a(1), \dots, a(j)$ with coefficients in $\bQ$,
without knowing the indeterminates $x_i$ or even the number of indeterminates.
First few examples are 
$s(1) = a(1)$, 
$s(2) = (a(1)^2 - a(2)) / 2$, and
$s(3) = (a(1)^3 - 3 a(1) a(2) + 2 a(3)) / 6$.
\begin{proof}

	We first consider the case $Y$ is smooth.
	Write $q = p^b$.
	Then $F^b \in \End(\Hcrys^2(Y/W(\bF_q)))$ is linear (not only semilinear), 
	its characteristic polynomial have coefficients in $\bZ$,
	and this coincides with 
	the characteristic polynomial of the $q$-th power Frobenius on $\Het^2(Y, \bQ_l)$ for any prime $l \neq p$
	(\cite{Illusie:report}*{3.7.3}).
	Let $y_i$ ($1 \leq i \leq 22$) be the eigenvalues, and let $x_i = q^{-1} y_i$. 
	The Weil conjecture (more precisely, the Lefschetz trace formula) asserts that
	$x_i$ satisfy $\card{Y(\bF_{q^m})} = 1 + (q^m)^2 + q^m \sum_{i} x_i^m$ for all $m \geq 1$.	
	Then $a(m) := \sum_i x_i^m$ and $x_i$ satisfy the required properties.
	Let $S(j)$ and $s(j)$ be respectively the $j$-th elementary symmetric polynomials of $y_i$ and of $x_i$
	($S(j)$ are the coefficients of the characteristic polynomial).
	The $p$-adic valuations of $S(j)$ are encoded in the Newton polygon of $\Hcrys^2(Y/W(k))$,
	and the Newton polygon is described (Theorem \ref{thm:crystalline slope})
	in terms of $h = \height(Y)$ as follows.
	Let $\ord_p$ be the $p$-adic valuation.
	\begin{itemize}
	\item If $h < \infty$,
	then $\ord_p(S(j)) \geq j ((h-1)/h) \ord_p(q)$ 
	(equivalently $\ord_p(s(j)) \geq -(j/h) \ord_p(q)$) for all $0 \leq j \leq h$,
	and the equality holds for $j = h$.
	\item If $h = \infty$, 
	then $\ord_p(S(j)) \geq j \ord_p(q)$ (equivalently $\ord_p(s(j)) \geq 0$) for all $j$.
	\end{itemize}
	From this, it follows that 
	$h > n$ if and only if $s(1), \dots, s(n) \in (p/q) \bZ$.
	We also observe that, for any fixed embedding $\overline{\bQ} \injto \overline{\bQ_p}$,
	the number of $x_i$ with negative valuation is equal to $\height(Y)$ if $\height(Y) < \infty$ and to $0$ if $\height(Y) = \infty$.

	Now consider an RDP K3 surface $Y$ and its minimal resolution $\map{\pi}{\tilde{Y}}{Y}$.
	Let $E_1, \dots, E_t \subset \tilde{Y}_{\overline{\bF_q}}$ be the exceptional curves,
	and $x_1, \dots, x_t$ be the eigenvalues of the subspace of 
	$\Het^2(\tilde{Y}_{\overline{\bF_q}}, \bQ_l)$ generated by the Chern classes of $E_i$.
	Then all $x_i$ are roots of unity.
	We will show that $\card{\tilde{Y}(\bF_{q^m})} = \card{Y(\bF_{q^m})} + q^m \sum_{i = 1}^{t} x_i^m$.
	Suppose this for the moment.
	Then we have $\card{Y(\bF_{q^m})} = 1 + q^{2m} + q^m \sum_{i = t+1}^{22} x_i^m$.
	Let $\tilde{s}(j)$ and $s(j)$ be respectively the $j$-th elementary symmetric polynomials 
	of $x_i$ ($1 \leq i \leq 22$) and $x_i$ ($t+1 \leq i \leq 22$).
	Then, since the number of elements with negative valuation are equal for the two families,
	the height predicted by the sequence $s(j)$ coincides with that by the sequence $\tilde{s}(j)$.

	To show the equality on the number of rational points, 
	it suffices to show $\card{Y'(\bF_{q^m})} = \card{Y(\bF_{q^m})} + q^m \sum_{i = 1}^{t'} x_i^m$,
	where $\map{\pi'}{Y'}{Y}$ is the blowup at one singular point $y \in Y$
	and $x_i$ ($1 \leq i \leq t'$) are the eigenvalues corresponding to $\pi'$-exceptional curves.
	Let $k(y) = \bF_{q^r}$. 
	Let $z \in Y_{k(y)}$ be a point above $y$
	and $E_1, \dots, E_s \subset Y'_{k(y)}$ be the exceptional curves above $z$.
	Since $z$ is a double point, the possibilities for $s$ and the components $E_1, \dots, E_s$ are as follows.
	\begin{enumerate}
	\item $s = 1$, $E_1$ is a smooth conic (with an $k(y)$-rational point), \label{case:smooth}
	\item $s = 2$, $E_1$ and $E_2$ are two distinct lines. \label{case:split two lines}
	\item $s = 1$, $E_1$ is a non-smooth conic that splits into two distinct lines over the quadratic extension $\bF_{q^{2r}}$ of $k(y)$. \label{case:non-split two lines}
	\item $s = 1$, $E_1$ is a double line. \label{case:double line}
	\end{enumerate} 
	We have $t' = 2 s r$ in case (\ref{case:non-split two lines}), and $t' = s r$ in all other cases.
	In cases (\ref{case:smooth}), (\ref{case:non-split two lines}), and (\ref{case:double line}), 
	the eigenvalues $x_1, \dots, x_{t'}$ are all $t'$-th roots of unity.
	In case (\ref{case:split two lines}), the eigenvalues are all $r$-th roots of unity, each appearing twice.
	In each case, it is straightforward to check the required equality.
\end{proof}

\begin{thm}[Ito \cite{Ito:ss-reduction-K3-CM}*{Theorem 1.1}] \label{thm:reduction of CM K3}
Let $X$ be a K3 surface in characteristic $0$, 
having complex multiplication (CM) by a CM-field $E$,
and defined over a number field $K$ containing $E$.
Suppose $X$ has good reduction $X_v$ at a prime $v$ of $K$.
Let $\idealp$, $\idealq$, and $p$ be respectively the primes of $E$, $F$, and $\bQ$ below $v$,
where $F$ is the maximal totally-real subfield of $E$.
\begin{itemize}
\item
If $\idealq$ splits in $E$, then $X_v$ is of height $[E_{\idealp} : \bQ_p] < \infty$.
\item
If $\idealq$ does not split (in other words, if it ramifies or is inert) in $E$, then $X_v$ is supersingular (i.e.\ of height $\infty$).
\end{itemize}
\end{thm}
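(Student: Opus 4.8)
The plan is to reduce to the Shimura--Taniyama formula for the reduction of CM abelian varieties, via the Kuga--Satake construction, and to read off $\height(X_v)$ from the Newton polygon of $\Hcrys^2(X_v)$ using Theorem~\ref{thm:crystalline slope}. First recall the Hodge-theoretic picture. Since $X$ has CM by the CM field $E$, the transcendental part $T := T(X)$ of $H^2$ is a free $E$-module of rank one, and $T \otimes \bQ$ carries a Hodge structure of K3 type; decomposing $T \otimes \bC = \bigoplus_{\tau \colon E \to \bC} L_\tau$ into the $E$-eigenlines, the line $T^{2,0}$ equals $L_{\tau_0}$ for a single embedding $\tau_0$, the line $T^{0,2}$ equals $L_{\bar\tau_0}$ where $x \mapsto \bar x$ denotes complex conjugation on $E$ (the nontrivial element of $\Gal(E/F)$), and $T^{1,1} = \bigoplus_{\tau \neq \tau_0, \bar\tau_0} L_\tau$. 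As the CM action is defined over $K \supset E$, one normalizes $\tau_0$ to be the composite of the structural embedding $E \hookrightarrow K$ with a fixed $K \hookrightarrow \bC$; then, after fixing a place of $\overline{\bQ}$ above $v$, the embedding $\tau_0$ (resp.\ $\bar\tau_0$) induces the prime $\fp$ (resp.\ $\bar\fp$) of $E$.

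Next I would transport this structure to the special fibre. Good reduction at $v$ makes $\Het^2(X_{\overline{K}}, \bQ_p)$ crystalline as a representation of $\Gal(\overline{K}_v/K_v)$, and the comparison isomorphism identifies it with the $F$-isocrystal underlying $\Hcrys^2(X_v)$; the Galois-stable subspace $T \otimes \bQ_p$ is likewise crystalline and corresponds to a sub-$F$-isocrystal $D$, whose orthogonal complement is spanned (over a finite extension) by specialized divisor classes and is therefore isoclinic of slope $1$. The crucial point is that the $E \otimes \bQ_p$-action on $T \otimes \bQ_p$ is compatible with the crystalline structure, so that $E \otimes \bQ_p$ acts on $D$; I would obtain this from the Kuga--Satake picture, using that the Kuga--Satake abelian variety $A$ of the CM K3 surface $X$ has complex multiplication, that good reduction of $X$ at $v$ forces good reduction of $A$ at $v$, and that the Kuga--Satake correspondence is absolute Hodge, hence compatible with the crystalline realization --- equivalently, this is the statement that the CM point of the $\GSpin$ Shimura variety attached to the K3 lattice reduces to a CM point in characteristic $p$. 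This is the main obstacle, and it rests on the theory of good reduction of K3 surfaces together with the mixed-characteristic Kuga--Satake construction; granting it, the rest is bookkeeping.

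Finally I would run the Shimura--Taniyama computation. Decompose $D = \bigoplus_{\fp' \mid p} D_{\fp'}$ along $E \otimes \bQ_p = \prod_{\fp' \mid p} E_{\fp'}$; each $D_{\fp'}$ is an $F$-isocrystal that is free of rank one over $E_{\fp'}$, hence isoclinic, with Newton slope $\tfrac{1}{[E_{\fp'}:\bQ_p]}\sum_{\psi} n_\psi$, where $\psi$ ranges over the embeddings $E_{\fp'} \hookrightarrow \overline{\bQ}_p$ and $n_\psi \in \{0,1,2\}$ is the jump of the Hodge filtration along the corresponding line --- the Shimura--Taniyama formula, transferred from abelian varieties via Kuga--Satake. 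By the first paragraph, $n_{\tau_0} = 2$, $n_{\bar\tau_0} = 0$, and $n_\psi = 1$ for all other $\psi$, so $D_{\fp'}$ has slope $1$ whenever $\fp' \neq \fp, \bar\fp$. If $\fq$ splits in $E$ then $\fp \neq \bar\fp$, so $\tau_0$ is the unique special embedding inducing $\fp$ and $\bar\tau_0$ the unique one inducing $\bar\fp$; setting $h := [E_\fp : \bQ_p] = [E_{\bar\fp}:\bQ_p]$, we find that $D_\fp$ has slope $1 + 1/h$ and $D_{\bar\fp}$ has slope $1 - 1/h$. Hence $\Hcrys^2(X_v)$ has slopes $1 - 1/h$, $1$, $1 + 1/h$ with multiplicities $h$, $22 - 2h$, $h$, and Theorem~\ref{thm:crystalline slope} (whose slope data characterizes the height) gives $\height(X_v) = h = [E_\fp:\bQ_p] < \infty$. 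If $\fq$ does not split then $\bar\fp = \fp$, so $\tau_0$ and $\bar\tau_0$ both induce $\fp$ and $D_\fp$ has slope $\tfrac{2 + 0 + ([E_\fp:\bQ_p]-2)}{[E_\fp:\bQ_p]} = 1$; every $D_{\fp'}$ then has slope $1$, so $\Hcrys^2(X_v)$ is isoclinic of slope $1$, i.e.\ $X_v$ is supersingular.
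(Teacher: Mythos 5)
The paper does not prove this statement: it is imported verbatim from Ito's paper (see the citation in the theorem header) and used as a black box in Proposition \ref{prop:no D19^8} and Example \ref{ex:p=2}. So there is no internal proof to compare yours against; I can only assess your sketch on its own terms and against Ito's actual argument, whose overall strategy (Kuga--Satake, CM points on the $\GSpin$ Shimura variety, Shimura--Taniyama slope computation) it correctly reproduces.

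Your final paragraph --- the decomposition of $T\otimes\bQ_p$ along $E\otimes\bQ_p=\prod_{\fp'\divides p}E_{\fp'}$, the observation that each rank-one factor is isoclinic (the slope decomposition would be cut out by Frobenius-stable idempotents of $E_{\fp'}\otimes K_0$, and $\sigma$ permutes the primitive idempotents transitively), and the bookkeeping $n_{\tau_0}=2$, $n_{\bar{\tau}_0}=0$, $n_\psi=1$ otherwise --- is correct and reproduces exactly the slope datum of Theorem \ref{thm:crystalline slope} in both the split and non-split cases. The genuine gap is the one you flag yourself: the assertion that the $E\otimes\bQ_p$-action on the transcendental part of $\Hcrys^2(X_v/W(k))$ exists, commutes with Frobenius, and sits in a weakly admissible filtered module whose Hodge jumps are prescribed by the CM type. ``The Kuga--Satake correspondence is absolute Hodge, hence compatible with the crystalline realization'' is not a deduction: absolute Hodge cycles control the de Rham and $\ell$-adic realizations in characteristic $0$, and transporting them to the crystalline cohomology of the special fibre requires the integral theory (integral canonical models of $\GSpin$ Shimura varieties and the extension of the Kuga--Satake period map over them, after Madapusi Pera), together with the main theorem of complex multiplication to identify where the CM point reduces. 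That is where essentially all of Ito's work lies; your write-up correctly reduces the theorem to this input but does not supply it, so as a self-contained proof it is incomplete. Two smaller points to tighten: good reduction of $X$ at $v$ only gives potentially good reduction of the Kuga--Satake abelian variety, which is harmless for Newton polygons but should be said; and the labelling of $\fp$ versus $\bar{\fp}$ depends on the choice of embedding $E\injto K$ acting on $H^0(X,\Omega^2)$, though since $[E_\fp:\bQ_p]=[E_{\bar{\fp}}:\bQ_p]$ in the split case the stated height is unaffected.
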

In this paper, we use this theorem only in the following situations.
\begin{itemize}
\item If $X$ has Picard number $20$,
then $X$ has CM by $E = \bQ(\sqrt{-\disc{T(X)}})$
(see \cite{Huybrechts:lecturesK3}*{Remark 3.3.10}),
where $T(X) = \Pic(X_{\bC})^{\perp} \subset H^2(X_{\bC}, \bZ)$ is the transcendental lattice.
In this case $F = \bQ$.

In this case Theorem \ref{thm:reduction of CM K3} is proved 
by Shimada \cite{Shimada:reduction of singular K3}*{Theorem 1} 
for all but finitely many $p$ not dividing $2 \disc \Pic(X)$ (for each $X$).
However we use Theorem \ref{thm:reduction of CM K3} for $p = 2$.
\item If $X$ admits an automorphism acting on $H^0(X, \Omega^2)$ 
by a primitive $m$-th root $\zeta_m$ of unity 
and if $\rank T(X) = \phi(m)$, 
then $X$ has CM by the cyclotomic field $E = \bQ(\zeta_m)$.
In this case $F = \bQ(\zeta_m + \zeta_m^{-1})$.

Jang \cite{Jang:representations-automorphism}*{Corollary 4.3} proved the following related result.
Suppose $Y$ is a K3 surface in characteristic $p > 2$
that admits an automorphism acting on $H^0(Y, \Omega^2)$ 
by a primitive $m$-th root of unity, 
and assume $22 - \phi(m) \leq \rho(Y)$. 
If $p^n \equiv -1 \pmod{m}$ for some $n$ then $Y$ is supersingular,
and otherwise $\height(Y)$ is equal to the order of $p$ in $(\bZ/m\bZ)^*$.
However we use Theorem \ref{thm:reduction of CM K3} for $p = 2$.
\end{itemize}

\section{Height of K3 surfaces and gap of morphisms between RDP K3 surfaces} \label{sec:height of morphisms}

In this section we prove the main results of the paper: Theorems \ref{thm:main:introduction} and \ref{thm:quotient:introduction} (Theorems \ref{thm:main}, \ref{thm:quotient}, and \ref{thm:quotient:etale}).

The height of a K3 surface $Y$ is characterized by the Frobenius action on $W_n$-valued cohomology $H^2(\Witt[n]{\cO_Y})$.
We regard this as a property of the Frobenius morphism
and generalize it to a property of morphisms between RDP K3 surfaces,
which we call the \emph{gap} of such morphisms.
Proposition \ref{prop:local to height:bis} shows how to compute the gap from the action on local cohomology groups. 
Using this, we derive main theorems from the local computations done in Section \ref{sec:RDP}.

In this section everything is in characteristic $p > 0$.

\subsection{Gap of morphisms between RDP K3 surfaces} \label{sec:height-RDP}
\begin{defn} \label{def:height of morphism}
Let $\map{\pi}{X}{Y}$ be a morphism between RDP K3 surfaces.
We define the \emph{gap} $\gap(\pi)$ of $\pi$
to be the minimum integer $n \geq 0$ such that 
the morphism $\map{\pi^*}{H^2(Y, \Witt[n+1]{\cO_Y})}{H^2(X, \Witt[n+1]{\cO_X})}$ is nonzero
if such $n$ exists,
and to be $\infty$ if there is no such $n$.
\end{defn}

\begin{rem} \label{rem;gap and height}
	By Theorem \ref{thm:vdGK},
	we have $\height(Y) = \gap(\Frob_Y) + 1$.
\end{rem}

We have the following.
\begin{lem} \label{lem:height}
	Let $\map{\pi}{X}{Y}$ be a morphism between RDP K3 surfaces.
\begin{enumerate}
	\item \label{lem:height:image of pi^*}
	The image of $\map{\pi^*}{H^2(Y, \Witt[n]{\cO_Y})}{H^2(X, \Witt[n]{\cO_X})}$ 
	is equal to $V^{\gap(\pi)}(H^2(X, \Witt[n-\gap(\pi)]{\cO_X}))$ if $n > \gap(\pi)$
	and to $0$ if $n \leq \gap(\pi)$.
	\item \label{lem:height:additive}
	If $\map{\pi'}{U}{X}$ is another morphism between RDP K3 surfaces,
	then $\gap(\pi \circ \pi') = \gap(\pi) + \gap(\pi')$
	(under the natural convention that $\infty + n = \infty + \infty = \infty$).
	\item \label{lem:height:birational}
	If $\pi$ is birational, then $\gap(\pi) = 0$.
	\item \label{lem:height:separable}
	More generally, if $\pi$ is dominant and separable, then $\gap(\pi) = 0$.
	\item \label{lem:height:linearly reductive}
	If $\pi$ is a $\mu_n$-quotient morphism, then $\gap(\pi) = 0$.
\end{enumerate}
\end{lem}	
\begin{proof}
	We will use the exact sequences in the proof of Lemma \ref{lem:Witt of RDP K3}.

	(\ref{lem:height:image of pi^*})
	For $n \leq \gap(\pi)$, this follows from the definition of $\gap(\pi)$.
	Suppose $n > \gap(\pi)$. 
	Write $\delta = \gap(\pi)$ and $\map{\pi^*_n = \pi^*}{H^2(\Witt[n]{\cO_Y})}{H^2(\Witt[n]{\cO_X})}$.
	Since $\pi^*_{\delta} = 0$, the morphism $\pi^*_n$ induces 
	$\map{\tilde{\pi}^*_n}{H^2(\Witt[n]{\cO_Y})}{H^2(\Witt[n - \delta]{\cO_X})}$ from the exact sequence.
	We will show by induction on $n \geq \delta$ that $\tilde{\pi}^*_n$ is surjective.
	This is clear if $n = \delta$.
	Suppose it is true for $n - 1$ ($\geq \delta$).
	Then since $\Image(\tilde{\pi}^*_n) \supset V(\Image(\tilde{\pi}^*_{n-1})) = V(H^2(\Witt[n - 1 - \delta]{\cO_X}))$
	and $H^2(\Witt[n - \delta]{\cO_X}) / V(H^2(\Witt[n - 1 - \delta]{\cO_X}))$ is of length $1$,
	either $\tilde{\pi}^*_n$ is surjective or $\Image(\tilde{\pi}^*_n) = V(H^2(\Witt[n - 1 - \delta]{\cO_X}))$.
	The latter would imply from the exact sequence that $\pi^*_{\delta + 1}$ is zero, contradicting the definition of $\delta$.

	(\ref{lem:height:additive})
	By (\ref{lem:height:image of pi^*}),
	the image of $\map{\pi'^* \circ \pi^*}
	{H^2(Y, \Witt[n]{\cO_Y})}{H^2(U, \Witt[n]{\cO_U})}$ 
	is 
	\[
	\begin{cases}
		0 & (n \leq \gap(\pi) + \gap(\pi')), \\
		V^{\gap(\pi) + \gap(\pi')}(H^2(U, \Witt[n - \gap(\pi) - \gap(\pi')]{\cO_U})) \neq 0 & (n > \gap(\pi) + \gap(\pi')).
	\end{cases}
	\]

	(\ref{lem:height:birational})
	By (\ref{lem:height:additive}),
	it suffices to consider the case where $\pi$ is the minimal resolution $\tilde{Y} \to Y$,
	and this follows from Lemma \ref{lem:Witt of RDP K3}.

	(\ref{lem:height:separable})
	For a general closed point $x \in X$, 
	the induced morphism $\cO_{Y, y} \to \cO_{X, x}$ is an isomorphism, where $y := \pi(x)$.
	Apply Proposition \ref{prop:local to height:bis}(\ref{prop:local to height:bis:upper}) to a generator of ${\localcohtorsion{y}{\cO_{Y,y}}{\idealm_y}}$ with $n = 1$.
	
	(\ref{lem:height:linearly reductive})
	Being the $\mu_n$-quotient, $\cO_Y \subset \pi_* \cO_X$ is a direct summand.
	It follows that $H^2(Y, \cO_Y) \to H^2(Y, \pi_* \cO_X) = H^2(X, \cO_X)$ is injective, hence nonzero.
\end{proof}

\begin{cor} \label{cor:height of separably isogenous K3}
	If a morphism $\map{\pi}{X}{Y}$ between RDP K3 surfaces has finite gap (e.g.\ if it is dominant and separable), then $\height(X) = \height(Y)$.
\end{cor}
\begin{proof}
Apply Lemma \ref{lem:height}(\ref{lem:height:additive}) to $\Frob_Y \circ \pi = \pi \circ \Frob_X$.
\end{proof}
In some cases we can bound $\gap(\pi)$
using behaviors of $\pi^*$ on local cohomology groups.

\begin{prop} \label{prop:local to height:bis}
	Let $\map{\pi}{X}{Y}$ be a morphism between RDP K3 surfaces.
	Let $x \in X$ be a point, $y = \pi(x)$, 
	$\idealm_y \subset \cO_{Y,y}$ the maximal ideal, and
	$I \subsetneq \cO_{Y,y}$ an $\idealm_y$-primary ideal.
	Let $n \geq 1$ be an integer. 
	Consider 
	an element $e \in \localcohtorsion{y}{\Witt[n]{\cO_{Y,y}}}{I}$
	and its image by the morphism	
	\[ \map{\pi^*}
	{\localcohtorsion{y}{\Witt[n]{\cO_{Y,y}}}{I}}
	{\localcohtorsion{x}{\Witt[n]{\cO_{X,x}}}{I \cO_{X,x}}} . \]

	\begin{enumerate}
		\item \label{prop:local to height:bis:upper}
		Suppose 
		$\pi^*(e) = V^{n-1}(e')$
		for some generator $e' \in \localcohtorsion{x}{\cO_{X,x}}{\idealm_x}$.
		Then $\gap(\pi) < n$.
		\item \label{prop:local to height:bis:lower}
		Suppose $I = \idealm_y$ and 
		$R^{n-1}(e) \in \localcohtorsion{y}{\cO_{Y,y}}{\idealm_y}$ is a generator.
		If $\pi^*(e) = 0$,
		then $\gap(\pi) \geq n$.
		\item \label{prop:local to height:bis:exact}
		Suppose $I = \idealm_y$, 
		$R^{n-1}(e) \in \localcohtorsion{y}{\cO_{Y,y}}{\idealm_y}$ is a generator, and
		$\pi^*(e) = V^{n-1}(e')$
		for some generator $e' \in \localcohtorsion{x}{\cO_{X,x}}{\idealm_x}$.
		Then $\gap(\pi) = n-1$.
	\end{enumerate}
\end{prop}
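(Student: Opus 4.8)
The strategy is to compare the torsion classes in the statement with their images in $H^2(Y,\Witt[n]{\cO_Y})$ through a local-to-global map. For a closed point $y$ of an RDP K3 surface $Y$ and $n\ge 1$, the natural (forgetful) map $\localcoh{y}{\Witt[n]{\cO_Y}}\to H^2(Y,\Witt[n]{\cO_Y})$ from local cohomology, combined with excision (which identifies $\localcoh{y}{\Witt[n]{\cO_Y}}$ with the local cohomology of the local ring $\Witt[n]{\cO_{Y,y}}$), gives a homomorphism $\map{\lambda_y}{\localcoh{y}{\Witt[n]{\cO_{Y,y}}}}{H^2(Y,\Witt[n]{\cO_Y})}$. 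Being induced by maps of sheaves, $\lambda_y$ is compatible with $V$, $R$ and $F$; and, after restriction to the relevant torsion parts and using that $x$ is isolated in $\pi^{-1}(y)$ (forced once the map $\pi^*$ of the statement is defined), it is compatible with $\pi^*$, i.e.\ $\pi^*\circ\lambda_y=\lambda_x\circ\pi^*$, where on the left $\pi^*$ is the pullback on $H^2$ and on the right it is the one of Lemma~\ref{lem:morphisms on local cohomology}; this last compatibility I would reduce, by passing to affine neighborhoods of $x$ and $y$ and using excision, to the functoriality of the \v{C}ech description of Section~\ref{sec:Witt local cohomology}. I would also record, from $H^1(Y,\cO_Y)=0$ together with the sequences $0\to\Witt[n']{\cO_Y}\namedto{V^{n-n'}}\Witt[n]{\cO_Y}\namedto{R^{n'}}\Witt[n-n']{\cO_Y}\to 0$, that $H^1(Y,\Witt[n]{\cO_Y})=0$, that each $\map{V^{i}}{H^2(Y,\cO_Y)}{H^2(Y,\Witt[n]{\cO_Y})}$ is injective, and that $H^2(Y,\Witt[n]{\cO_Y})=V\bigl(H^2(Y,\Witt[n-1]{\cO_Y})\bigr)+\Witt[n]{k}\cdot\eta$ for any $\eta$ with $R^{n-1}(\eta)$ a generator of $H^2(Y,\cO_Y)\isom k$. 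The same holds for $X$.

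The one substantive point, and the step I expect to be the main obstacle, is the claim $(\star)$: \emph{$\lambda_y$ with $n=1$ restricts to an isomorphism of the $1$-dimensional $k$-space $\localcohtorsion{y}{\cO_{Y,y}}{\fm_y}$ (which is $1$-dimensional because $\cO_{Y,y}$ is Gorenstein) onto $H^2(Y,\cO_Y)$}. To prove $(\star)$, I would use Lemma~\ref{lem:bis:interpretation of Ext} to identify the source with $\Ext^2_{\cO_Y}(i_{y*}k,\cO_Y)$, where $\map{i_y}{\{y\}}{Y}$; under this identification $\lambda_y$ becomes the map $\Ext^2_{\cO_Y}(i_{y*}k,\cO_Y)\to\Ext^2_{\cO_Y}(\cO_Y,\cO_Y)=H^2(Y,\cO_Y)$ induced by the surjection $\cO_Y\surjto i_{y*}k$ --- this being how the comparison map $\localcoh{y}{\functorspace}\to H^2(Y,\functorspace)$ reads on the $m=1$ term of Grothendieck's presentation $\localcoh{y}{\functorspace}=\varinjlim_m\Ext^\bullet_{\cO_Y}(\cO_Y/\fm_y^m,\functorspace)$. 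Since $Y$ is a proper Gorenstein surface with $\omega_Y\isom\cO_Y$, Serre duality turns this map into the $k$-linear dual of the evaluation $H^0(Y,\cO_Y)\to H^0(Y,i_{y*}k)$, which is an isomorphism $k\to k$; this gives $(\star)$. The care here lies essentially in checking that $\lambda_y$ is indeed this $\Ext$ map.

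Granting $\lambda_y$ and $(\star)$, the first two assertions become formal. For the first: applying $\lambda_x$ to $\pi^*(e)=V^{n-1}(e')$ and using the compatibilities gives $\pi^*\bigl(\lambda_y(e)\bigr)=\lambda_x\bigl(V^{n-1}(e')\bigr)=V^{n-1}\bigl(\lambda_x(e')\bigr)$, which by $(\star)$ equals $V^{n-1}$ of a generator of $H^2(X,\cO_X)$, hence is nonzero in $H^2(X,\Witt[n]{\cO_X})$ by injectivity of $V^{n-1}$; therefore $\map{\pi^*}{H^2(Y,\Witt[n]{\cO_Y})}{H^2(X,\Witt[n]{\cO_X})}$ is nonzero and $\height(\pi)\le n$. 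For the second, I would induct on $n$. Put $\eta=\lambda_y(e)$; by compatibility with $R$ and by $(\star)$, $R^{n-1}(\eta)=\lambda_y\bigl(R^{n-1}(e)\bigr)$ is a generator of $H^2(Y,\cO_Y)$, so $H^2(Y,\Witt[n]{\cO_Y})=V\bigl(H^2(Y,\Witt[n-1]{\cO_Y})\bigr)+\Witt[n]{k}\cdot\eta$, while $\pi^*(\eta)=\lambda_x\bigl(\pi^*(e)\bigr)=\lambda_x(0)=0$. If $n=1$, this says that $\pi^*$ kills a generator of $H^2(Y,\cO_Y)$, so $\height(\pi)>1$. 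If $n\ge 2$, the inductive hypothesis applied to $R(e)$ --- which satisfies $R^{n-2}(R(e))=R^{n-1}(e)$, a generator, and $\pi^*(R(e))=R(\pi^*(e))=0$ --- gives $\height(\pi)>n-1$, so by Lemma~\ref{lem:height} $\pi^*$ vanishes on $H^2(Y,\Witt[n-1]{\cO_Y})$; since $\pi^*$ commutes with $V$ and is $\Witt[n]{k}$-linear, it then vanishes on all of $V\bigl(H^2(Y,\Witt[n-1]{\cO_Y})\bigr)+\Witt[n]{k}\cdot\eta=H^2(Y,\Witt[n]{\cO_Y})$, so $\height(\pi)>n$.

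Finally, the third assertion combines the first two: the first gives $\height(\pi)\le n$; and if $n\ge 2$, the second assertion applied to $R(e)$ --- which satisfies $\pi^*(R(e))=R\bigl(V^{n-1}(e')\bigr)=0$ because the composite $\Witt[1]{\functorspace}\namedto{V^{n-1}}\Witt[n]{\functorspace}\namedto{R}\Witt[n-1]{\functorspace}$ of Witt-vector functors vanishes --- gives $\height(\pi)>n-1$, while if $n=1$ one has $\height(\pi)\ge 1$ trivially; in either case $\height(\pi)=n$.
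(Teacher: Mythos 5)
Your proposal is correct and follows essentially the same route as the paper: the paper's proof constructs the same local-to-global comparison map (denoted $\gamma_{I,n}$ there, defined directly through $\Ext^2(\cO_Y/\cI,\Witt[n]{\cO_Y})\to H^2(Y,\Witt[n]{\cO_Y})$ rather than through the forgetful map on local cohomology with supports, but identified with your $\lambda_y$ by exactly the $\Ext$-description you give), proves your key claim $(\star)$ by the same Serre-duality argument (dualizing $H^0(Y,\cO_Y)\isomto H^0(Y,k(y))$), and then deduces (1), (2) by induction, and (3) by combining the two, just as you do.
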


\begin{proof}
	Given $I \subsetneq \cO_{Y,y}$ and $n$ as in the statement,
	we consider the map $\map{\gamma = \gamma_{I,n}}{\localcohtorsion{y}{\Witt[n]{\cO_{Y,y}}}{I}}{H^2(Y, \Witt[n]{\cO_Y})}$ defined by
\[
	\localcohtorsion{y}{\Witt[n]{\cO_{Y,y}}}{I} 
	\isomfrom
	\Ext^2(\cO_{Y,y}/I, \Witt[n]{\cO_{Y,y}}) 
	\isomfrom \Ext^2(\cO_{Y}/\cI, \Witt[n]{\cO_{Y}}) \to
	 H^2(Y, \Witt[n]{\cO_Y}), 
\]
	where $\cI := \Ker(\cO_Y \to \cO_{Y,y} / I) \subset \cO_Y$
	(so $\Supp{\cO_Y / \cI} = \set{y}$ and $\cO_Y/\cI \cong \cO_{Y,y}/I$).
	This $\gamma = \gamma_{I,n}$ commutes 
	with inclusions of ideals $I' \subset I$,
	with $V$, 
	and with $\pi^*$ (with $\pi$ as in the statement). 

	If $n = 1$ and $I = \idealm_y$, then $\gamma_{\idealm_y,1}$ is an isomorphism,
	since the final map in the diagram is, by Serre duality,
	the dual of the isomorphism $H^0(Y, \cO_Y) \isomto H^0(Y, k(y))$.

	(\ref{prop:local to height:bis:upper})
	Applying $\gamma$ to $\pi^*(e) = V^{n-1}(e')$,
	we obtain $\pi^*(\gamma(e)) = V^{n-1}(\gamma(e'))$.
	As mentioned above, $\gamma_{\idealm_x,1}$ is an isomorphism, hence $V^{n-1}(\gamma(e'))$ is nonzero.
	Hence $\gap(\pi) \leq n-1$.
	
	(\ref{prop:local to height:bis:lower})
	By applying the induction hypothesis to $R(e)$ if $n > 1$,
	we obtain $\gap(\pi) \geq n-1$.
	Since $\gamma_{\idealm_y,1}$ is an isomorphism, $R^{n-1}(\gamma(e)) \in H^2(\cO_Y)$ is a generator.
	Hence $H^2(\Witt[n]{\cO_Y})$ is generated by $\gamma(e)$ and $V(H^2(\Witt[n-1]{\cO_Y}))$,
	and $\pi^*$ annihilates both since $\pi^*(e) = 0$ and $\gap(\pi) \geq n-1$.

	(\ref{prop:local to height:bis:exact})
	Applying (\ref{prop:local to height:bis:lower}) to $R(e)$ if $n > 1$,
	we obtain $\gap(\pi) \geq n-1$.
	Applying (\ref{prop:local to height:bis:upper}) to $e$, 
	we obtain $\gap(\pi) < n$.
\end{proof}

From this proposition we deduce bounds, or moreover exact values, of 
the height of RDP K3 surfaces with suitable singularities.
It turns out, surprisingly, that \emph{every} non-taut RDP is suitable.

\subsection{Gap of Frobenius maps and non-taut RDPs}

Combining Proposition \ref{prop:local to height:bis}
with the computation on Frobenius maps on local RDPs given in Section \ref{sec:RDP:Frobenius}, 
we prove the following relation between 
the isomorphism class of a non-taut RDP on an RDP K3 surface and the height of the surface.
This is trivially true when $r_{\max}(p, S) = 0$ (recall Convention \ref{conv:rmax=0}). 

\begin{thm}[Precise form of Theorem \ref{thm:main:introduction}] \label{thm:main}
	Let $S$ be a Dynkin diagram and $p \geq 0$ be a characteristic.
	Let $r_{\max} = r_{\max}(p,S)$ be the integer defined in Section \ref{sec:introduction}.
	Define a subsequence $(r_1, r_2, \dots, r_l)$ of $(r_{\max}(p,S), \dots, 2, 1)$ as follows.
	\begin{itemize}
		\item $(p, S) = (2, D_N)$, $N \geq 8$ ($r_{\max} = \floor{N/2} - 1$): 
		\begin{itemize}
			\item If $8 \leq N \leq 9$: $(r_1, r_2) = (\floor{N/2} - 1, \floor{N/2} - 2)$.
			\item If $10 \leq N $: $(r_1, r_2, r_3) = (\floor{N/2} - 1, \floor{N/2} - 2, \floor{N/2} - 4)$.
		\end{itemize}
		\item $(p, S) = (2, E_8)$ ($r_{\max} = 4$): $(r_1, r_2, r_3) = (4, 3, 2)$.
		\item all other cases: $(r_1, \dots, r_l)$ is the entire sequence $(r_{\max}(p,S), \dots, 2, 1)$.
	\end{itemize}
	Then we have the following.
	Suppose an RDP K3 surface $Y$ admits an RDP of type $S^r$.
	\begin{itemize}
		\item If $r > 0$, then $\height(Y) \leq l$ and $r = r_{\height(Y)}$.
		\item If $r = 0$, then $\height(Y) > l$.
	\end{itemize}
\end{thm}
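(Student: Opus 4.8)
The plan is to derive everything from the local computations of Section \ref{sec:RDP:Frobenius} by applying Proposition \ref{prop:local to height:bis} to the Frobenius morphism $\map{F}{Y}{Y}$, for which $\height(F) = \height(Y)$ (Theorem \ref{thm:vdGK}; see also the remark after Definition \ref{def:height of morphism}). We may assume $r_{\max}(p,S) > 0$, the other case being trivial. Let $y \in Y$ carry the RDP of type $S^r$ and set $A = \cO_{Y,y}$; then the map $F^*$ on the groups $\localcohtorsion{\idealm_A}{\Witt[n]{A}}{I}$ is precisely the Frobenius $F$ of Propositions \ref{prop:local Frob:2:D}, \ref{prop:local Frob:2:E8} and \ref{prop:local Frob:E:bis}, so each such computation is exactly the hypothesis of one of the three parts of Proposition \ref{prop:local to height:bis}. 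The subsequence $(r_1,\dots,r_l)$ is arranged so that $r = r_h$ is the value of $r$ for which the computation forces $\height(Y) = h$; for $r > 0$ outside the subsequence the computations will instead give incompatible upper and lower bounds on $\height(Y)$, so that no such RDP K3 surface exists and the statement holds \emph{vacuously}. The proof is then a case analysis over $(p,S)$.

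First I would treat the $E$-type RDPs. In every such case $r_h = r_{\max}+1-h$, and the values $r_1 > \dots > r_l$ are exactly those $r > 0$ with $r_{\max}+1-r \le l$ (here $l = r_{\max}$ except when $(p,S) = (2,E_8)$, where $l = 3$). For such $r$ put $n = h = r_{\max}+1-r$; this $n$ lies in the range allowed by Proposition \ref{prop:local Frob:E:bis}, which produces $e$ with $R^{n-1}(e)$ a generator and $F(e) = V^{n-1}(e')$ for a generator $e'$, whence Proposition \ref{prop:local to height:bis}(\ref{prop:local to height:bis:exact}) gives $\height(Y) = n = h$, i.e.\ $r = r_{\height(Y)}$ and $\height(Y) \le l$. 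For $r = 0$, taking $n = r_{\max}$ (or $n = 3$ when $(p,S) = (2,E_8)$) gives $F(e) = 0$, and Proposition \ref{prop:local to height:bis}(\ref{prop:local to height:bis:lower}) gives $\height(Y) > l$. The single leftover is $r = 1$ for $(2,E_8)$, where $n = 4$ is out of range: here I would combine Proposition \ref{prop:local Frob:E:bis}(\ref{case:2:E7E8}) with $n = 3$ (which gives $F(e) = 0$, hence $\height(Y) > 3$) with Proposition \ref{prop:local Frob:2:E8} for $E_8^1$ (which, via Proposition \ref{prop:local to height:bis}(\ref{prop:local to height:bis:upper}) applied to the $\fm_y$-primary ideal $I_2$ and $n = 1$, gives $\height(Y) \le 1$); the contradiction shows $E_8^1$ is not realizable on an RDP K3 surface.

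Next I would treat $(2, D_N)$. Writing $m = \floor{N/2}$, one has in every subcase $r_h = m - 2^{h-1}$, and $l$ is the largest $h \le 3$ with $m - 2^{h-1} \ge 1$. Everything here uses Proposition \ref{prop:local Frob:2:D}, in which $a = \floor{N/2} - r - C_1(n,j)$ with $C_1(n,j) = 2^{n-1}(2j-1)+1$. For $r > 0$ with $m - r = 2^{h-1}$ a power of $2$, the choice $j = 1$, $n = h$ gives $a = -1$, so $F(e) = V^{h-1}(e')$ with $e'$ a generator of the $\fm_A$-torsion; by Proposition \ref{prop:local to height:bis}(\ref{prop:local to height:bis:exact}) this gives $\height(Y) = h$, which is the asserted $r = r_{\height(Y)}$ provided $h \le l$, while if $h > l$ (which forces $h \ge 4$ and $N \ge 2m \ge 2 + 2^h$) the finiteness of $\height(Y) = h$ contradicts the bound $N < 22 - 2\height(Y)$ of Proposition \ref{prop:bound of height}, so that RDP cannot occur. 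For $r > 0$ with $m - r$ not a power of $2$, write $m - r = 2^{n-1}u$ with $u \ge 3$ odd; then $j = 1$ with that $n$ gives $a \ge 1$, hence $F(e) = 0$ and $\height(Y) > n$ by Proposition \ref{prop:local to height:bis}(\ref{prop:local to height:bis:lower}), while $j = (u+1)/2$ with the same $n$ gives $a = -1$, hence $\height(Y) \le n$ by Proposition \ref{prop:local to height:bis}(\ref{prop:local to height:bis:upper}); the contradiction again rules out the RDP. Finally for $r = 0$, $j = 1$ with $n = l$ gives $a \ge 0$, hence $F(e) = 0$ and $\height(Y) > l$. (The cases $N \le 7$ are subsumed, since there $l = m - 1 \le 2$ and the full subsequence $(r_{\max}, \dots, 1)$ agrees with $(m - 2^{h-1})_{h}$.)

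The purely mechanical part is to verify, in each application of Proposition \ref{prop:local Frob:2:D}, the two inequalities involving $C_1(n,j)$ for the chosen pair $(n,j)$ — routine estimates with powers of $2$. The step I expect to be the main obstacle is the organization of the $(2, D_N)$ case for large $N$: the local computation alone does not exclude $m - r = 2^{h-1}$ with $h \ge 4$, so one must combine it with the global bound of Proposition \ref{prop:bound of height} to rule out those configurations, and this is precisely what accounts for the subsequence being the proper $(m-1, m-2, m-4)$ rather than $(m-1, \dots, 1)$. The $(2, E_8)$, $r = 1$ contradiction is conceptually the subtlest point, but it is short once Propositions \ref{prop:local Frob:2:E8} and \ref{prop:local Frob:E:bis} are both available.
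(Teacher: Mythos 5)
Your proposal is correct and follows essentially the same route as the paper: apply Proposition \ref{prop:local to height:bis} to the Frobenius, feeding in the local computations of Propositions \ref{prop:local Frob:2:D}, \ref{prop:local Frob:2:E8} and \ref{prop:local Frob:E:bis}, with the same three-way case split ($E$-type, the $(2,E_8^1)$ contradiction, and $(2,D_N)$ combined with the global bound of Proposition \ref{prop:bound of height} to force $\height(Y)\leq 3$). The only difference is organizational: in the $D_N$ case you split on whether $\floor{N/2}-r$ is a power of $2$ and derive an explicit contradiction otherwise, whereas the paper first proves $\floor{N/2}-r\leq 2^{\height(Y)-1}$ and then upgrades it to an equality when $r>0$ — the same two instances of Proposition \ref{prop:local Frob:2:D} in either packaging.
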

\begin{cor} \label{cor:no E81}
	On RDP K3 surfaces in characteristic $2$, 
	RDPs of type 
	$D_N^r$ ($r > 0$ and $\floor{N/2} - r \not\in \set{1,2,4}$) and $E_8^1$ do not occur.
\end{cor}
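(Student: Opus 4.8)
The plan is to obtain this as a direct corollary of Theorem \ref{thm:main}, which already pins down every positive coindex that can occur: if an RDP K3 surface $Y$ carries an RDP of type $S^r$ with $r > 0$, then $r$ is one of the entries $r_1, \dots, r_l$ of the prescribed subsequence of $(r_{\max}(p,S), \dots, 2, 1)$ (and in fact $r = r_{\height(Y)}$). So the whole content of the corollary is to read off which values appear in that subsequence for the two relevant pairs $(p,S) = (2, E_8)$ and $(p,S) = (2, D_N)$; all the genuine work — the Frobenius computations on $W_n$-valued local cohomology in Section \ref{sec:RDP:Frobenius} and the passage to the global height via Proposition \ref{prop:local to height:bis} — has already been carried out.

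First I would treat $(2, E_8)$. Here $r_{\max} = 4$ and the subsequence specified in Theorem \ref{thm:main} is $(r_1, r_2, r_3) = (4, 3, 2)$. Since $1$ does not occur among $\set{r_1, r_2, r_3}$, the $r > 0$ clause of Theorem \ref{thm:main} forbids an RDP of type $E_8^1$ on any RDP K3 surface.

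Next I would go through the three regimes governing the subsequence for $(2, D_N)$. If $N \geq 10$ the subsequence is $(\floor{N/2}-1, \floor{N/2}-2, \floor{N/2}-4)$, so each entry $r_i$ satisfies $\floor{N/2} - r_i \in \set{1,2,4}$. If $8 \leq N \leq 9$ it is $(\floor{N/2}-1, \floor{N/2}-2)$, giving $\floor{N/2} - r_i \in \set{1,2}$. If $4 \leq N \leq 7$ we fall under the ``all other cases'' branch and the subsequence is the entire $(\floor{N/2}-1, \dots, 2, 1)$; but in this range $\floor{N/2}-1 \leq 2$, so again $\floor{N/2} - r_i \in \set{1,2}$ for every entry. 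Hence in all cases a positive coindex $r$ of a $D_N$-RDP occurring on an RDP K3 surface satisfies $\floor{N/2} - r \in \set{1,2,4}$; contrapositively, if $r > 0$ and $\floor{N/2} - r \notin \set{1,2,4}$ then such an RDP cannot occur, which is exactly the claim.

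There is essentially no obstacle here beyond this bookkeeping. The one point worth a remark is the small range $4 \leq N \leq 7$: there the assertion is vacuous, since no $r$ with $0 < r \leq \floor{N/2}-1$ can have $\floor{N/2} - r \notin \set{1,2,4}$; one can either note this explicitly or simply use, as above, that every entry of the subsequence still satisfies $\floor{N/2} - r_i \in \set{1,2,4}$. So the proof will be a short deduction from Theorem \ref{thm:main} together with an inspection of the definition of the subsequence in the two stated cases.
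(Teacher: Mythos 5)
Your proposal is correct and matches the paper's intent exactly: the corollary is stated without a separate proof precisely because it is the direct bookkeeping consequence of Theorem \ref{thm:main} that you spell out (the value $1$ is absent from the subsequence $(4,3,2)$ for $(2,E_8)$, and every entry $r_i$ of the $(2,D_N)$ subsequences satisfies $\floor{N/2}-r_i\in\set{1,2,4}$, the range $N\leq 7$ being vacuous). The only substantive content — that Theorem \ref{thm:main} holds even in the case $(2,E_8^1)$, where its proof proceeds by deriving a contradiction and thereby establishes non-occurrence — is already part of the theorem's proof, so nothing further is needed.
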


\begin{proof}[Proof of Theorem \ref{thm:main}]
	Since $\height(Y) = \gap(\Frob_Y) + 1$ by Remark \ref{rem;gap and height}, 
	it suffices to compute $\delta := \gap(\Frob_Y)$.
	We apply Proposition \ref{prop:local to height:bis} to suitable elements of the local cohomology groups.

	Suppose $Y$ is an RDP K3 surface in characteristic $p$
	having a non-taut RDP of type $S^r$.
	If $(p, S^r) \neq (2, D_N^r), (2, E_8^1)$,
	then the assertion follows from Proposition \ref{prop:local to height:bis} 
	((\ref{prop:local to height:bis:exact}) if $r > 0$ and 
	 (\ref{prop:local to height:bis:lower}) if $r = 0$) applied 
	to the elements $e$ given in Proposition \ref{prop:local Frob:E:bis}.

	Suppose $(p, S^r) = (2, E_8^1)$.
	By Proposition \ref{prop:local to height:bis} 
	(\ref{prop:local to height:bis:upper}) and  
	(\ref{prop:local to height:bis:lower}) 
	applied to the elements given in 
	Propositions \ref{prop:local Frob:2:E8} and \ref{prop:local Frob:E:bis} respectively,
	we obtain $\delta \leq 0$ and $\delta \geq 3$.
	Contradiction. 

	Suppose $(p, S^r) = (2, D_N^r)$. 
	We have $r_{\max} + 1 = \floor{N/2}$.
	It suffices to show that 
	\begin{itemize}
	\item the inequality $\floor{N/2} - r \leq 2^{\delta}$ holds, 
	\item this inequality is equality if $r > 0$, and 
	\item $\delta \leq 2$ if $r > 0$.
	\end{itemize}
	Let $n'$ be the (unique) non-negative integer satisfying $2^{n'-1} < \floor{N/2} - r \leq 2^{n'}$.
	By applying Proposition \ref{prop:local to height:bis}(\ref{prop:local to height:bis:lower})
	to the element given in Proposition \ref{prop:local Frob:2:D} 
	for $(n, j) = (n', 1)$
	(in which case $a \geq 0$),
	we obtain $\delta \geq n'$.
	Hence $\floor{N/2} - r \leq 2^{n'} \leq 2^{\delta}$.

	Suppose moreover $r > 0$.
	Let $(n, j)$ be the (unique) pair of positive integers with $\floor{N/2} - r = 2^{n - 1} (2 j - 1)$.
	By applying Proposition \ref{prop:local to height:bis}(\ref{prop:local to height:bis:upper})
	to the element given in Proposition \ref{prop:local Frob:2:D} for this $(n,j)$
	(in which case $a = -1$),
	we obtain $\delta < n$. 
	Hence we have $2^{\delta} \leq 2^{n - 1} \leq 2^{n - 1} (2 j - 1) = \floor{N/2} - r$,
	therefore $\floor{N/2} - r = 2^{\delta}$.
	Since $N < 22 - 2 \height(Y) = 20 - 2 \delta$ (Corollary \ref{cor:bound of height})
	and $N \geq 2 (\floor{N/2} - r) = 2^{\delta + 1}$, 
	we have $\delta \leq 2$.
\end{proof}

\subsection{\texorpdfstring{Gap of $\mu_p$- and $\alpha_p$-quotient morphisms}{Gap of mu\_p- and alpha\_p-quotient morphisms}} \label{sec:height-mualpha}

Suppose 
$X$ and $Y$ are RDP K3 surfaces and 
$\map{\pi}{X}{Y}$ is a $G$-quotient morphism with $G \in \set{\mu_p, \alpha_p}$.
The author proved \cite{Matsumoto:k3alphap}*{Theorem 4.3}
that the ``dual'' map $\map{\pi'}{\thpower{Y}{1/p}}{X}$ is also a $G'$-quotient morphism with $G' \in \set{\mu_p, \alpha_p}$.
Here, both $G' = G$ and $G' \neq G$ are possible (see \cite{Matsumoto:k3alphap}*{Examples 10.2--10.4}).

\begin{defn}[\cite{Matsumoto:k3alphap}*{Definition 3.4}] \label{def:maximal}
	We say that a $G$-quotient morphism $\map{\pi}{X}{Y}$ between RDP K3 surfaces is \emph{maximal}
	if there is no point $x \in X$ such that $x$ and $\pi(x)$ are both RDPs.
\end{defn}
	The author proved \cite{Matsumoto:k3alphap}*{Corollary 3.5} that 
	for any $G$-quotient morphism $\map{\pi}{X}{Y}$ between RDP K3 surfaces 
	there is a maximal $G$-quotient morphism $\map{\pi_1}{X_1}{Y_1}$ between RDP K3 surfaces 
	with a birational and $G$-equivariant morphism $X_1 \to X$.
	Then $Y_1 \to Y$ is also birational, and hence $\gap(\pi) = \gap(\pi_1)$ by Lemma \ref{lem:height}(\ref{lem:height:additive},\ref{lem:height:birational}).

\begin{thm}[Precise form of Theorem \ref{thm:quotient:introduction}] \label{thm:quotient}
	Let $\map{\pi}{X}{Y}$ be as above.
\begin{enumerate}
\item \label{thm:quotient:each}
If $\pi$ is maximal (Definition \ref{def:maximal}),
then we have
\[
\gap(\pi) = \begin{cases}
0 & \text{if $G = \mu_p$ (in which case $p \leq 7$ and $\Sing(Y) = \frac{24}{p+1} A_{p-1}$),} \\
1 & \text{if $G = \alpha_p$ and $(p, \Sing(Y)) = (2, 2 D_4^0), (3, 2 E_6^0), (5, 2 E_8^0)$,} \\
2 & \text{if $G = \alpha_p$ and $(p, \Sing(Y)) = (2, 1 D_8^0)$,} \\
3 & \text{if $G = \alpha_p$ and $(p, \Sing(Y)) = (2, 1 E_8^0)$.}
\end{cases}
\]
This covers all possibilities for $G$, $p$, and $\Sing(Y)$ 
in the maximal case (\cite{Matsumoto:k3alphap}*{Theorem 4.6}).
\item \label{thm:quotient:composite}
We have $\height(X) = \height(Y) = \gap(\pi) + \gap(\pi') + 1$.
In particular, $X$ and $Y$ are of finite height.
\end{enumerate}
\end{thm}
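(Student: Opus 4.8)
The plan is to prove (1) and (2) separately: statement (1) by localising at a single singular point of $Y$ and feeding the computations of Section~\ref{sec:RDP:mu alpha} into the criterion of Proposition~\ref{prop:local to height:bis}, and statement (2) by applying the multiplicativity of heights under composition, Lemma~\ref{lem:height}(\ref{lem:height:additive}), to the two factorisations of Frobenius afforded by the duality $\pi\leftrightarrow\pi'$.

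For (1), assume $\pi$ is maximal. By \cite{Matsumoto:k3alphap}*{Theorem 4.6} the admissible triples $(G,p,\Sing(Y))$ are exactly the ones displayed, and in each of them $\Sing(Y)$ contains a singular point $y$ of the stated type $S^0$. Since $\pi$ is finite and purely inseparable it is a homeomorphism, so $y$ has a unique preimage $x\in X$, and by maximality $x$ is a smooth point of $X$. By \cite{Matsumoto:k3alphap}*{Theorems 3.3, 4.6} the local extension $\Spec\hat\cO_{X,x}\to\Spec\hat\cO_{Y,y}$ is precisely the $G$-quotient appearing in Proposition~\ref{prop:local mu_p alpha_p}, for the value of $n$ attached to the case at hand ($n=1$ if $G=\mu_p$; $n=2$ for $(2,2D_4^0)$, $(3,2E_6^0)$ and $(5,2E_8^0)$; $n=3$ for $(2,1D_8^0)$; $n=4$ for $(2,1E_8^0)$). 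That proposition then produces $e\in\localcohtorsion{y}{\Witt[n]{\cO_{Y,y}}}{\fm_y}$ whose restriction $R^{n-1}(e)$ generates $\localcohtorsion{y}{\cO_{Y,y}}{\fm_y}$ and with $\pi^*(e)=V^{n-1}(e')$ for a generator $e'\in\localcohtorsion{x}{\cO_{X,x}}{\fm_x}$, so Proposition~\ref{prop:local to height:bis}(\ref{prop:local to height:bis:exact}) yields $\height(\pi)=n$. Reading off $n$ case by case gives the asserted values of $\height(\pi)$, and the parenthetical constraints on $p$ and on $\Sing(Y)$ are the content of \cite{Matsumoto:k3alphap}*{Theorem 4.6}.

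For (2), recall from the construction of the dual quotient \cite{Matsumoto:k3alphap}*{Theorem 4.3} that $\pi\circ\pi'\colon\thpower{Y}{1/p}\to Y$ is the relative Frobenius of $\thpower{Y}{1/p}$, and that, writing $\thpower{\pi}{1/p}\colon\thpower{X}{1/p}\to\thpower{Y}{1/p}$ for the base change of $\pi$ along the inverse of the Frobenius $\sigma$ of $k$, the composite $\pi'\circ\thpower{\pi}{1/p}\colon\thpower{X}{1/p}\to X$ is the relative Frobenius of $\thpower{X}{1/p}$. Since $k$ is perfect, base change along $\sigma^{-1}$ is an equivalence under which $H^2(X,\Witt[n]{\cO_X})$ corresponds to $H^2(\thpower{X}{1/p},\Witt[n]{\cO_{\thpower{X}{1/p}}})$ and $\pi^*$ to $(\thpower{\pi}{1/p})^*$; in particular $\height(\thpower{\pi}{1/p})=\height(\pi)$. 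Moreover the height of the relative Frobenius of an RDP K3 surface, regarded as a morphism, equals the height of the surface (the remark following Definition~\ref{def:height of morphism}, via Theorem~\ref{thm:vdGK}). Applying Lemma~\ref{lem:height}(\ref{lem:height:additive}) to the two factorisations above gives
\[
\height(Y)=\height(\pi\circ\pi')=\height(\pi)+\height(\pi')-1=\height(\pi'\circ\thpower{\pi}{1/p})=\height(X).
\]
Finally, $\pi'$ is again a $G'$-quotient morphism between RDP K3 surfaces, and, as recalled before the theorem, the height of any such morphism equals that of a maximal model; part (1) bounds the height of a maximal model by $4$, so $\height(\pi)$ and $\height(\pi')$ are finite, hence so is $\height(X)=\height(Y)$.

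The internal bookkeeping is light — the needed local cohomology computations are already carried out in Section~\ref{sec:RDP:mu alpha} and the height criteria are Propositions~\ref{prop:local mu_p alpha_p} and~\ref{prop:local to height:bis} — so the substance lies in correctly invoking the external classification \cite{Matsumoto:k3alphap}*{Theorems 3.3, 4.3, 4.6}: one must use both the exact local model of $\pi$ at each singular point of $Y$ in the maximal case and the exact shape of the dual morphism $\pi'$, so that the two Frobenius factorisations used in (2) are literally valid. The only other point needing (routine) care is the Frobenius-twist bookkeeping in (2), namely that passing from $\pi$ to $\thpower{\pi}{1/p}$ leaves the relevant $W_n$-cohomology maps, and hence the height, unchanged — which holds precisely because $k$ is perfect.
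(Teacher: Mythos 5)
Your proposal is correct and follows essentially the same route as the paper: part (1) by localising at each singular point of $Y$, identifying the local model via \cite{Matsumoto:k3alphap} and combining Proposition \ref{prop:local mu_p alpha_p} with Proposition \ref{prop:local to height:bis}(\ref{prop:local to height:bis:exact}); part (2) by applying Lemma \ref{lem:height}(\ref{lem:height:additive}) to the factorisations $\Frob_Y = \pi \circ \pi'$ and $\Frob_X = \pi' \circ \thpower{\pi}{1/p}$ and reducing finiteness to the maximal case. The only difference is that you spell out the Frobenius-twist bookkeeping ($\height(\thpower{\pi}{1/p}) = \height(\pi)$ over a perfect field), which the paper leaves implicit.
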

\begin{proof}
(\ref{thm:quotient:each})
Let $y \in Y$ be a singular point.
Since $\pi$ is maximal, 
the inverse image $\pi^{-1}(y)$ of $y$ is smooth,
hence $\Spec \cO_{X,\pi^{-1}(y)} \to \Spec \cO_{Y,y}$ is as in Proposition \ref{prop:local mu_p alpha_p}.
Hence we obtain $\gap(\pi)$ from 
Proposition \ref{prop:local to height:bis}(\ref{prop:local to height:bis:exact}).
For the case $G = \mu_p$, we can also use Lemma \ref{lem:height}(\ref{lem:height:linearly reductive}).

(\ref{thm:quotient:composite})
	Since $\Frob_Y = \pi \circ \pi'$ and $\Frob_X = \pi' \circ \thpower{\pi}{1/p}$,
	the first assertion follows from Lemma \ref{lem:height}(\ref{lem:height:additive}).
	To show the finiteness of $\gap(\pi)$ and $\gap(\pi')$,
	we may assume $\pi$ is maximal, and then $\pi'$ is also maximal,
	and we can apply (\ref{thm:quotient:each}) to $\pi$ and $\pi'$.
\end{proof}

\begin{cor} \label{cor:no alpha alpha}
	Let $\map{\pi}{X}{Y}$ be as above.
	
If $p = 5$, then $(G, G') \neq (\alpha_5, \alpha_5)$.

If $p = 2$ and $\pi$ is maximal, then $(G, G', \Sing(X), \Sing(Y)) \neq (\alpha_2, \alpha_2, 1 E_8^0, 1 E_8^0)$.
\end{cor}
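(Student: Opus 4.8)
The plan is to obtain both non-existence statements by computing $\height(Y)$ from Theorem \ref{thm:quotient} and then contradicting the bound on the RDPs of an RDP K3 surface of a given height provided by Proposition \ref{prop:bound of height}.

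For the first statement, suppose $p = 5$ and $(G, G') = (\alpha_5, \alpha_5)$. Passing to a maximal model (\cite{Matsumoto:k3alphap}*{Corollary 3.5}), which preserves $G$, $G'$, $\height(\pi)$, $\height(\pi')$, and $\height(Y)$, I may assume $\pi$ is maximal; then $\pi'$ is maximal as well, exactly as in the proof of Theorem \ref{thm:quotient}(\ref{thm:quotient:composite}). Applying Theorem \ref{thm:quotient}(\ref{thm:quotient:each}) to the maximal $\alpha_5$-quotient $\pi$ forces $\Sing(Y) = 2 E_8^0$ and $\height(\pi) = 2$, and applying it to the maximal $\alpha_5$-quotient $\pi'$ (whose target is $X$) forces $\height(\pi') = 2$. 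By Theorem \ref{thm:quotient}(\ref{thm:quotient:composite}) we get $\height(Y) = \height(\pi) + \height(\pi') - 1 = 3$, which is finite, so Proposition \ref{prop:bound of height} requires $\sum N_i < 22 - 2 \cdot 3 = 16$ for the RDPs of $Y$; but $\Sing(Y) = 2 E_8^0$ gives $\sum N_i = 16$, a contradiction.

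For the second statement, suppose $p = 2$, $\pi$ is maximal, and $(G, G', \Sing(X), \Sing(Y)) = (\alpha_2, \alpha_2, 1 E_8^0, 1 E_8^0)$. Then $\pi'$ is also maximal, so Theorem \ref{thm:quotient}(\ref{thm:quotient:each}) applied to $\pi$ (using $\Sing(Y) = 1 E_8^0$) and to $\pi'$ (whose target $X$ has $\Sing(X) = 1 E_8^0$) gives $\height(\pi) = \height(\pi') = 4$, hence $\height(Y) = 4 + 4 - 1 = 7$ by Theorem \ref{thm:quotient}(\ref{thm:quotient:composite}). Now Proposition \ref{prop:bound of height} requires $\sum N_i < 22 - 2 \cdot 7 = 8$, contradicting $\sum N_i = 8$ for $\Sing(Y) = 1 E_8^0$.

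The substantive content is entirely carried by Theorem \ref{thm:quotient}; the only point needing care is the reduction to the maximal case in the first statement, i.e.\ that forming the maximal model of \cite{Matsumoto:k3alphap}*{Corollary 3.5} leaves the dual group scheme $G'$ unchanged (it leaves $G$ unchanged by construction, and leaves all the heights unchanged via Lemma \ref{lem:height}(\ref{lem:height:birational})). I expect no genuine difficulty beyond this bookkeeping.
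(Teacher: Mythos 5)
Your proposal is correct and follows essentially the same route as the paper: reduce to the maximal case, use Theorem \ref{thm:quotient} to pin down $\height(Y)=3$ (resp.\ $7$) together with $\Sing(Y)=2E_8^0$ (resp.\ $1E_8^0$), and contradict the strict inequality $\sum N_i < 22-2h$ of Proposition \ref{prop:bound of height}. The only difference is that you spell out the bookkeeping (in particular that maximalization preserves $G'$) which the paper leaves implicit.
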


\begin{proof}
	We may suppose $\pi$ is maximal.
	(By above, this implies that if $p = 5$ and $G = \alpha_5$ then $\Sing(Y) = 2 E_8^0$.)
	Then the height of $Y$ asserted in Theorem \ref{thm:quotient}, which is $3$ or $7$ respectively, 
	contradicts Corollary \ref{cor:bound of height}.
\end{proof}

All other $(G, G', \Sing(X), \Sing(Y))$ is realizable
(see \cite{Matsumoto:k3alphap}*{Examples 10.2--10.5}).
Hence we have the following.

\begin{cor}
	Suppose an RDP K3 surface $X$ in characteristic $p$
	admits an action of $\mu_p$ or $\alpha_p$
	whose quotient is an RDP K3 surface.
	Then $\height(X) \leq 6,3,2,1$ for $p = 2,3,5,7$ respectively,
	and every such positive integer is realizable.
\end{cor}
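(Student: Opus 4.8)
The plan is to extract the upper bound from Theorem \ref{thm:quotient} sharpened by Corollary \ref{cor:no alpha alpha}, and to obtain the realizability half from the examples of \cite{Matsumoto:k3alphap}.

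Let $\map{\pi}{X}{Y}$ be a $G$-quotient morphism realizing the hypothesis, with $G \in \set{\mu_p, \alpha_p}$. Passing to an associated maximal quotient (the remark following Definition \ref{def:maximal}) changes neither $G$, $\height(\pi)$, nor $\height(X)$, so I may assume $\pi$ is maximal; then, as in the proof of Theorem \ref{thm:quotient}(\ref{thm:quotient:composite}), the dual quotient $\map{\pi'}{\thpower{Y}{1/p}}{X}$ is a maximal $G'$-quotient with $G' \in \set{\mu_p, \alpha_p}$. By Theorem \ref{thm:quotient}(\ref{thm:quotient:composite}) we have $\height(X) = \height(\pi) + \height(\pi') - 1$. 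Reading off Theorem \ref{thm:quotient}(\ref{thm:quotient:each}) in each characteristic, the height of a maximal $G$-quotient morphism is at most $4$ if $p = 2$, at most $2$ if $p = 3$ or $p = 5$, equal to $1$ if $p = 7$, and no such morphism exists for $p > 7$. Applied to both $\pi$ and $\pi'$ this gives $\height(X) \le 7, 3, 3, 1$ for $p = 2, 3, 5, 7$ respectively; for $p = 3$ and $p = 7$ this is already the claimed bound.

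To sharpen for $p = 5$, note that Corollary \ref{cor:no alpha alpha} gives $(G, G') \ne (\alpha_5, \alpha_5)$; since the formula for $\height(X)$ is symmetric in $\pi$ and $\pi'$, I may assume $G = \mu_5$, whence $\height(\pi) = 1$ by Theorem \ref{thm:quotient}(\ref{thm:quotient:each}) and so $\height(X) = \height(\pi') \le 2$. To sharpen for $p = 2$, suppose $\height(X) = 7$. Then $\height(\pi) = \height(\pi') = 4$, so Theorem \ref{thm:quotient}(\ref{thm:quotient:each}) forces $G = G' = \alpha_2$, with $\Sing(Y) = 1 E_8^0$ coming from $\pi$ and $\Sing(X) = 1 E_8^0$ coming from $\pi'$ (whose target is $X$); this contradicts Corollary \ref{cor:no alpha alpha}, so $\height(X) \le 6$.

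For realizability I would, in each characteristic and for each integer $h$ from $1$ up to the bound, point to a quadruple $(G, G', \Sing(X), \Sing(Y))$ among those realized in \cite{Matsumoto:k3alphap}*{Examples 10.2--10.5} (all quadruples except the two excluded by Corollary \ref{cor:no alpha alpha}) for which $\height(\pi) + \height(\pi') - 1 = h$, the values $\height(\pi)$ and $\height(\pi')$ being read off from Theorem \ref{thm:quotient}(\ref{thm:quotient:each}); for example $h = 6$ in characteristic $2$ is realized with $\height(\pi) = 4$ (type $1 E_8^0$) and $\height(\pi') = 3$ (type $1 D_8^0$). The only real work is bookkeeping --- applying the constraint ``height $4$ occurs only for type $1 E_8^0$'' on the correct side of $\pi$ and $\pi'$ when invoking Corollary \ref{cor:no alpha alpha}, and checking that the realized quadruples do cover every intermediate height --- and I expect no genuine obstacle beyond this, since all the substantive input is Theorem \ref{thm:quotient} together with Corollary \ref{cor:no alpha alpha}.
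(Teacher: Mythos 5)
Your proposal is correct and follows essentially the same route the paper intends: the paper states this corollary as an immediate consequence of Theorem \ref{thm:quotient} (the formula $\height(X) = \height(\pi) + \height(\pi') - 1$ with each summand read off from $\Sing(Y)$ and $\Sing(X)$ in the maximal case), Corollary \ref{cor:no alpha alpha} (which kills exactly the combinations giving $7$ for $p=2$ and $3$ for $p=5$), and the realizability of all remaining quadruples from \cite{Matsumoto:k3alphap}*{Examples 10.2--10.5}. Your write-up just makes explicit the bookkeeping the paper leaves implicit, and does so accurately.
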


Furthermore we have the following criterion.

\begin{cor} \label{cor:criterion}
	Suppose $X$ is an RDP K3 surface in characteristic $p$
	with a nontrivial $G$-action, $G \in \set{\mu_p, \alpha_p}$.
	Then,
\begin{itemize}
\item $\height(X) < \infty$ if and only if $X/G$ is an RDP K3 surface.
\item $\height(X) = \infty$ if and only if $X/G$ is either an RDP Enriques surface or a rational surface.
\end{itemize}	
\end{cor}
\begin{proof}
It is known (\cite{Matsumoto:k3alphap}*{Proposition 4.1})
that the quotient is either an RDP K3 surface, an RDP Enriques surface, or a rational surface.
	
We saw in Theorem \ref{thm:quotient} that if $X/G$ is an RDP K3 surface 
then $X$ is of finite height.

If $X/G$ is a rational surface or an RDP Enriques surface,
then $\Het^2(X/G, \bQ_l)$ is generated by algebraic cycles,
hence so is $\Het^2(X, \bQ_l)$,
hence $X$ is supersingular.
\end{proof}

\subsection{\texorpdfstring{The case of $\bZ/p\bZ$-quotients}{The case of Z/pZ-quotients}} \label{sec:height-etale}

Suppose $\map{\pi}{X}{Y}$ is a $\bZ/p\bZ$-quotient morphism between RDP K3 surfaces.
One can replace $X$ with its minimal resolution, to which the action extends, and this does not change the height of the surfaces.
Assuming $X$ is smooth, the author determined all possible configurations of singularities on $Y$
\cite{Matsumoto:k3alphap}*{Theorem 7.3(1)}.
In each case, the configuration contains a non-taut RDP with $r > 0$,
hence by Theorem \ref{thm:main} we can determine the height of $Y$.
By Corollary \ref{cor:height of separably isogenous K3}, we have $\height(X) = \height(Y)$.
Hence we obtain the following.
\begin{thm}[Precise form of Theorem \ref{thm:quotient:introduction}] \label{thm:quotient:etale}
	Let $\map{\pi}{X}{Y}$ be a $\bZ/p\bZ$-quotient morphism between RDP K3 surfaces.
	Then
	$\height(X)$ and $\height(Y)$ coincide, and are finite.
	Moreover, if $X$ is smooth then 
	\[
	\height(X) = \height(Y) = \begin{cases}
	1 & \text{if $(p, \Sing(Y)) = (2, 2 D_4^1), (3, 2 E_6^1), (5, 2 E_8^1)$}, \\
	2 & \text{if $(p, \Sing(Y)) = (2, 1 D_8^2)$}, \\
	3 & \text{if $(p, \Sing(Y)) = (2, 1 E_8^2)$}.
	\end{cases}
	\]
	This covers all possibilities for $p$ and $\Sing(Y)$ in the smooth case 
	(\cite{Matsumoto:k3alphap}*{Theorem 7.3(1)}).
\end{thm}

\begin{rem}
	In the case of $\bZ/p\bZ$-quotients, we do not have an equivalence as in Corollary \ref{cor:criterion}.
	There is an example of an $\bZ/p\bZ$-action on an ordinary K3 surface $X$ 
	with rational or Enriques quotient $Y$, at least in characteristic $2$.
\end{rem}

\section{RDPs realizable on K3 surfaces} \label{sec:realizable}

We determine which RDPs can occur on K3 surfaces.

\subsection{Non-taut RDPs}

In the non-taut case,
Theorem \ref{thm:main} (and Corollary \ref{cor:no E81}) and Corollary \ref{cor:bound of height}
give necessary conditions.
We will show in Proposition \ref{prop:no D19^8} that $D_{19}^8$ in characteristic $2$ is impossible.
It turns out that all remaining RDPs are realizable on RDP K3 surfaces,
as we will see in Section \ref{sec:examples}.
Summarizing:
\begin{thm} \label{thm:existence:non-taut}
	Consider a non-taut RDP $D_N^r$ or $E_N^r$ in characteristic $p > 0$.
	Then it occurs on some RDP K3 surface $Y$ in characteristic $p$ 
	if and only if it satisfies the following conditions.
	\begin{itemize}
		\item It does not contradict Corollary \ref{cor:no E81} and Proposition \ref{prop:no D19^8}
		(i.e.\ if $p = 2$, then it is not $D_N^r$ with $r > 0$ and $\floor{N/2} - r \notin \set{1,2,4}$, 
		nor $D_{19}^8$, nor $E_8^1$).
		\item $N < 22 - 2 h$ if $r > 0$, where $h$ is the height predicted in Theorem \ref{thm:main}.
		$N < 22$ if $r = 0$.
	\end{itemize}
\end{thm}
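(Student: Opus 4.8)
The plan is to prove the two implications separately: \emph{necessity} (an occurring non-taut RDP must satisfy the listed conditions) and \emph{sufficiency} (each RDP satisfying them is realizable), with almost all the work residing in the latter.

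For necessity, suppose $Y$ is an RDP K3 surface in characteristic $p$ carrying an RDP of type $D_N^r$ or $E_N^r$. First I would dispose of the exclusions: by Corollary \ref{cor:no E81}, in characteristic $2$ the types $D_N^r$ with $r > 0$ and $\floor{N/2} - r \notin \set{1,2,4}$, as well as $E_8^1$, cannot occur; and by Proposition \ref{prop:no D19^8} (proved separately in this section) the type $D_{19}^8$ cannot occur. For the numerical bound: if $r > 0$, then Theorem \ref{thm:main} shows that $h := \height(Y)$ is finite and equals the height predicted there, so Proposition \ref{prop:bound of height} yields $N \leq \sum_i N_i < 22 - 2h$; if $r = 0$, the same proposition gives $N \leq \sum_i N_i < 22$ regardless of whether $\height(Y)$ is finite. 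This settles necessity.

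For sufficiency, the task is to exhibit, for every pair $(p, S^r)$ surviving the above constraints, at least one RDP K3 surface admitting an RDP of type exactly $S^r$; this is carried out in Section \ref{sec:examples}. The constructions there use explicit equations --- typically weighted-projective hypersurfaces (whose height is read off via Proposition \ref{prop:height:wxyz}), elliptic fibrations, or $\mu_p$-, $\alpha_p$-, and $\bZ/p\bZ$-quotients of known K3 surfaces (whose height is controlled by Theorems \ref{thm:quotient} and \ref{thm:quotient:etale}). In each case one checks: that every singularity of the candidate surface is an RDP and its minimal resolution is a K3 surface; that it has an RDP whose local equation matches the relevant row of Table \ref{table:non-taut RDP}, so that the coindex is the claimed $r$ and not a smaller one; and, if one wishes, that the height equals the value predicted by Theorem \ref{thm:main} --- though this last check is automatic once the RDP type is pinned down. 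When $r = 0$ and $N$ is large, the examples must be taken supersingular so that only $N < 22$ is needed.

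The main obstacle is the sufficiency direction: it requires assembling a finite but sizeable list of explicit examples and, for each, verifying the \emph{exact} singularity type, which is the delicate part, since distinguishing $S^r$ from $S^{r'}$ amounts to tracking the lower-semicontinuous coindex through the coordinate change that brings the local equation to Artin's normal form. A secondary point worth flagging is that the general numerical bound alone does \emph{not} exclude $D_{19}^8$ in characteristic $2$ (Theorem \ref{thm:main} only forces height $1$, which is compatible with $N = 19 < 20$), so this single borderline case has to be handled by the ad hoc argument of Proposition \ref{prop:no D19^8}.
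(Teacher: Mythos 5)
Your proposal is correct and follows the paper's overall decomposition: necessity from Corollary \ref{cor:no E81}, Proposition \ref{prop:no D19^8}, and the bound $\sum N_i < 22-2h$ of Proposition \ref{prop:bound of height} combined with Theorem \ref{thm:main}; sufficiency from the explicit examples of Section \ref{sec:examples}. The one genuine divergence is in how the exact coindex $r$ of each example is certified, which you single out as the delicate obstacle. You propose to verify $S^r$ directly by bringing the local equation into Artin's normal form and tracking the coindex through the coordinate change; the paper instead runs Theorem \ref{thm:main} \emph{in reverse}: it first computes the global height of the example surface by independent means (the coefficient criterion of Proposition \ref{prop:height:wxyz}, point counting via Proposition \ref{prop:height:point counting}, or CM reduction via Theorem \ref{thm:reduction of CM K3}), identifies only the Dynkin type $S$ of the singular fibers, and then reads off $r$ as the unique coindex compatible with that height. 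This sidesteps the local normal-form analysis entirely, which is exactly the step you flag as hardest. Two smaller points: the paper obtains the many intermediate types $(N,r)$ not by a separate example for each but by contracting subdiagrams of a few large configurations, with Lemma \ref{lem:partial resolution} governing how the coindex changes under partial resolution --- your plan should cite this or produce substantially more examples; and your remark that the height check is ``automatic once the RDP type is pinned down'' inverts the paper's logical order, where the height computation is the input and the RDP type is the output.
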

\begin{prop} \label{prop:no D19^8}
	An RDP K3 surface in characteristic $2$
	cannot have an RDP of type $D_{19}^8$.
\end{prop}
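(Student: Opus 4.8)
The plan is to combine Theorem~\ref{thm:main}, Ito's reduction theorem (Theorem~\ref{thm:reduction of CM K3}), and a lattice computation. Assume for contradiction that $Y$ is an RDP K3 surface in characteristic $2$ carrying an RDP of type $D_{19}^8$, and let $\tilde Y\to Y$ be the minimal resolution. Since $\floor{19/2}-8=1=2^{1-1}$, Theorem~\ref{thm:main} gives $\height(Y)=1$, so $\tilde Y$ is ordinary. The exceptional curves span a primitive copy of $D_{19}(-1)$ in $\Pic(\tilde Y)$, so $\rho(\tilde Y)\ge 20$; combined with $\rho\le 22-2\height(Y)=20$ (Proposition~\ref{prop:bound of height}) this forces $\rho(\tilde Y)=20$.

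Next I would pass to characteristic $0$ via the Serre--Tate canonical lift. Since $\tilde Y$ is ordinary it has a canonical lift over $W(k)$ whose Picard lattice is $\Pic(\tilde Y)$; as $\rho=20$ this lift is rigid and descends to a number field, yielding a complex K3 surface $X$ with $\NS(X)\cong\Pic(\tilde Y)\supseteq D_{19}(-1)$ and good ordinary reduction $\tilde Y$ at a place over $2$. (The $D_{19}$ configuration deforms, so $X$ even comes from a complex RDP K3 surface with a $D_{19}$ singularity, but only the lattice $\NS(X)$ and the ordinariness of the reduction will be used.) Being of Picard number $20$, $X$ has complex multiplication by the imaginary quadratic field $E=\bQ(\sqrt{-\disc T})$, where $T=\NS(X)^\perp\subset H^2(X,\bZ)$ is the (even, positive-definite, rank $2$) transcendental lattice, with maximal totally real subfield $F=\bQ$ (cf.\ the discussion after Theorem~\ref{thm:reduction of CM K3}). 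The place of $F$ below ours is $(2)$, so Theorem~\ref{thm:reduction of CM K3} forces $(2)$ to split in $E$; equivalently the squarefree part of $-\disc T$ is $\equiv 1\pmod 8$, i.e.\ writing $\disc T=4^{a}w$ with $w$ odd (if the $2$-adic valuation of $\disc T$ were odd then $2$ would ramify, so this shape is automatic) we get $w\equiv 7\pmod 8$.

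The contradiction is lattice-theoretic. The orthogonal complement $M:=D_{19}(-1)^\perp$ in $\LKIII$ is an even positive-definite lattice of rank $3$ with $\abs{\disc M}=\abs{\disc D_{19}(-1)}=4$ and discriminant form $-q_{D_{19}(-1)}$, which is the cyclic form on $\bZ/4$ with generator of square $3/4$; this is exactly the discriminant form and signature of $A_3$, and the corresponding genus has a single class, so $M\cong A_3$, which is isometric to $D_3=\{x\in\bZ^3:\sum x_i\in 2\bZ\}$. Because $D_{19}(-1)\subseteq\NS(X)$, the transcendental lattice $T$ embeds primitively into $M$, and its orthogonal complement in $M$ is a rank-one lattice $\langle c\rangle$ spanned by a primitive vector; comparing discriminants, $c\,\disc T=4k^2$ with $k:=[M:T\oplus\langle c\rangle]$. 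Now suppose $2$ splits in $E$, i.e.\ $\disc T=4^{a}w$ with $w$ odd and $w\equiv 7\pmod 8$. On one hand, a primitive vector of $D_3$ of norm divisible by $4$ must have all coordinates even, so its norm is $4\nu$ with $\nu$ an \emph{odd} value of $x_1^2+x_2^2+x_3^2$, hence (three-squares theorem) $\nu\not\equiv 7\pmod 8$; and a primitive vector of $D_3$ of odd norm also has norm $\not\equiv 7\pmod 8$. On the other hand, extracting the power of $2$ from $c\,\disc T=4k^2$ and using $w^{-1}\equiv 7\pmod 8$ forces $c$ (when $c$ is odd) or $c/4$ (when $v_2(c)=2$) to be $\equiv 7\pmod 8$, while $v_2(c)\ge 4$ would make the spanning vector of $\langle c\rangle$ divisible by $4$, hence non-primitive. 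Each alternative contradicts the previous sentence. So $2$ does not split in $E$, contradicting Ito's theorem, and no such $Y$ exists.

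I expect the main obstacle to be this last step: pinning down $M=D_{19}(-1)^\perp_{\LKIII}$ precisely — in particular the one-class statement, which is what lets one invoke the three-squares theorem through the explicit model $D_3$ — and carrying out the $2$-adic bookkeeping on $c\,\disc T=4k^2$ cleanly across the cases. A secondary point needing care is the lifting: one must confirm the canonical lift really preserves the sublattice $D_{19}(-1)$ under a fixed identification of the K3 lattice, and that it spreads out over a number field so Theorem~\ref{thm:reduction of CM K3} applies; alternatively one may work directly with the Frobenius eigenvalues of $\tilde Y$ on $\ell$-adic cohomology, which by ordinariness are the roots of $x^2-tx+4$ with $t$ odd, generating $E=\bQ(\sqrt{t^2-16})$.
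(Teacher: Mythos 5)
Your proof is correct and follows the same overall skeleton as the paper's — Theorem \ref{thm:main} forces $\height(Y)=1$, one lifts to characteristic $0$ preserving the full Picard lattice, gets $\rho=20$, invokes CM by $E=\bQ(\sqrt{-d_0})$ and Ito's theorem to conclude $d_0\equiv -1\pmod 8$, and then derives a lattice-theoretic contradiction — but the final lattice step, which is the real content, is genuinely different. The paper's Lemma \ref{lem:lattice} never identifies the orthogonal complement of $D_{19}(-1)$: it works abstractly with the unimodular overlattice of $L_1\oplus L_2\oplus L_3$, shows the leading Gram entry of $T$ is an ideal norm in $\cO_E$, and reaches a contradiction by proving $-1$ would be a norm residue modulo $d_0$, which is impossible for an imaginary quadratic field. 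You instead pin down $M=D_{19}(-1)^{\perp}_{\LKIII}\cong A_3\cong D_3$ via the discriminant form and a class-number-one statement, and then get the contradiction from the three-squares theorem applied to the norm of the primitive generator of $T^{\perp}_M$. Your route is more elementary and concrete once $M\cong D_3$ is granted, but that identification is an extra input (the one-class fact for the genus of $A_3$ needs a reference or a short reduction-theory check, and you rightly flag it); the paper's norm-residue lemma needs only the discriminants of the three pieces and so is more robust, at the cost of some class-field-theoretic bookkeeping. Two small cleanups on your side: $D_3$ is an even lattice, so the ``primitive vector of odd norm'' and ``$c$ odd'' cases in your $2$-adic analysis are vacuous and can be deleted (indeed $v_2(c)$ is even and positive, hence $=2$ by your primitivity argument); and for the lift, the canonical (Serre--Tate/Nygaard) lift of the ordinary surface works, but the paper's citation of Lieblich--Maulik covers all finite heights uniformly and already comes with $\Pic(\cX)\isomto\Pic(\tilde Y)$, so you may as well use it. Also note that the primitivity of $D_{19}(-1)$ in $\NS$, which you assert, is automatic because the discriminant form of $D_{19}$ on $\bZ/4$ has no nonzero isotropic element.
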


\begin{proof}[Proof of Proposition \ref{prop:no D19^8}]
	Suppose $z \in Y$ is an RDP of type $D_{19}^8$ in characteristic $2$ on an RDP K3 surface $Y$.
	By Theorem \ref{thm:main}, $\height(Y) = 1$.
	Let $\tilde{Y} \to Y$ be the minimal resolution.
	Since $\height(\tilde{Y}) < \infty$ there exists, 
	by \cite{Lieblich--Maulik:cone}*{Corollary 4.2},
	a K3 surface $\cX$ over $\Spec W(k)$ with $\cX \otimes_{W(k)} k \cong \tilde{Y}$
	and $\Pic(\cX) \isomto \Pic(\tilde{Y})$.
	Let $X_{K} := \cX \otimes_{W(k)} {K}$ be the generic fiber of $\cX$ 
	over $K := \Frac W(k)$
	and let $X_{\bC} := X_K \otimes_{K} \bC$ for any embedding $K \to \bC$ 
	(which we may assume to exist by replacing $k$).
	Then we have $\Pic(X_{\bC}) \cong \Pic(\cX) \cong \Pic(\tilde{Y})$.
	
	Let $L_{1}$ be the sublattice of $\Pic(\tilde{Y}) \cong \Pic(X_{\bC})$ generated by the exceptional curves above $z$,
	and $L_2 := L_{1}^{\perp}$ be its orthogonal complement.
	Since $L_{1}$ is negative definite, $L_2$ is nonzero, 
	and since $\rho(\tilde{Y}) \leq 22 - 2 \height(\tilde{Y}) = 20$ (Proposition \ref{prop:bound of height}), 
	we have $\rank L_2 = 1$.
	The transcendental lattice $T = T(X_{\bC}) = (\Pic(X_{\bC}))^{\perp}$ in $H^2(X_{\bC}, \bZ)$ of $X_{\bC}$ 
	is a rank $2$ positive definite lattice,
	and then $X_{\bC}$ has complex multiplication by the imaginary quadratic field 
	$E := \bQ(\sqrt{-d})$, $d := \disc T(X_{\bC})$. 
	By Theorem \ref{thm:reduction of CM K3}, 
	the reduction $\tilde{Y}$ of $X_{\bC}$ at a prime above $2$ being ordinary 
	implies that $2$ is split in $E/\bQ$.
	Writing $d = k^2 d_0$ with $d_0$ square-free, 
	this means $d_0 \equiv -1 \pmod{8}$.
	By Lemma \ref{lem:lattice}, this is impossible.
\end{proof}
\begin{lem} \label{lem:lattice}
	Suppose $L_1$, $L_2$, and $L_3$ are lattices with
	\begin{itemize}
		\item $\disc(L_1) = - 4^x$ for some non-negative integer $x$,
		\item $L_2$ is positive definite, $\rank(L_2) = 1$,
		\item $L_3$ is positive definite, $\rank(L_3) = 2$, 
		$\disc(L_3) = k^2 d_0$ with $d_0$ square-free and $d_0 \equiv -1 \pmod{8}$.
	\end{itemize}
	Then $L_1 \oplus L_2 \oplus L_3$ does not admit a unimodular overlattice of finite index. 
\end{lem}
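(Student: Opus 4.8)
The plan is to invoke Nikulin's theory of discriminant forms. Write $L := L_1 \oplus L_2 \oplus L_3$ and $A_L := L^{\vee}/L$ for the (finite) discriminant group. Then $L$ admits a unimodular overlattice of finite index if and only if the discriminant bilinear form $b_L \colon A_L \times A_L \to \bQ/\bZ$ has a subgroup $H$ with $H = H^{\perp}$ (a ``Lagrangian''; equivalently $b_L|_{H} = 0$ and $|H|^{2} = |A_L|$). Since $(A_L, b_L) = (A_{L_1},b_{L_1}) \perp (A_{L_2},b_{L_2}) \perp (A_{L_3},b_{L_3})$, and since both the primary decomposition $A_L = \bigoplus_{p}(A_L)_{p}$ and any Lagrangian split orthogonally, it suffices to exhibit one prime $p$ for which $(A_L)_{p}$ has no Lagrangian; equivalently, that $L \otimes \bZ_{p}$ has no unimodular $\bZ_{p}$-overlattice of finite index.

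First I extract the numerical constraint: if a Lagrangian exists, then $|A_L| = |\disc L| = 4^{x}\,|\disc L_2|\,k^{2} d_0$ is a perfect square, and since $d_0$ is square-free this forces $|\disc L_2| = d_0 n^{2}$, so $L_2 \otimes \bQ \cong \langle d_0\rangle$. I also record that $d_0 \equiv 7 \pmod 8$ is odd, $> 1$, and $\equiv 3 \pmod 4$, hence $\Legendre{-1}{d_0} = -1$; in particular $d_0$ has a prime factor $q \equiv 3 \pmod 4$.

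The core is a Jordan-decomposition computation at the odd primes $q \mid d_0$. There $L_1 \otimes \bZ_{q}$ is unimodular (as $4^{x}$ is a $q$-adic unit), hence contributes nothing to $(A_L)_{q}$; the block of $L_2 \otimes \bZ_q$ lies at an odd level with unit $\equiv d_0/q$ modulo squares; and the two rank-one Jordan blocks of $L_3 \otimes \bZ_q$ sit at levels $a \le b$ with $a+b = 1 + 2v_q(k)$ odd, so $a$ is even and $b$ is odd, the product of their units being $\equiv d_0/q$. Now a Lagrangian of a finite bilinear $q$-form must induce a Lagrangian on each fixed-level homogeneous summand, and such a summand has a Lagrangian only if it has even rank and its associated $\bF_q$-quadratic form is hyperbolic (determinant $(-1)^{\mathrm{rank}/2}$ modulo squares). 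Chasing this forces the minimal configuration $a = 0$, $v_q(k) = 0$, $b = 1$, with $q$ exactly dividing $\disc L_2$; hyperbolicity of the resulting rank-two $\bF_q$-form then reads $u_2 u_3' \equiv -1$, which combined with $u_2 u_3' u_3 \equiv (d_0/q)^{2} \equiv 1$ gives $u_3 \equiv -1 \pmod{(\bZ_q^{\times})^{2}}$, i.e. the Hasse invariant of $L_3 \otimes \bQ_q$ equals $\Legendre{-1}{q}$.

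Finally I would globalise: imposing this for every $q \mid d_0$, invoking the Hilbert product formula for the binary $\bQ$-space $L_3 \otimes \bQ \cong \langle c, c d_0\rangle$, and using that $L_1 \otimes \bZ_{p}$ is unimodular for all odd $p$ while the remaining blocks of $L_2$ and $L_3$ at odd $p \nmid 2 d_0$ are forced to be trivial (a lone rank-one homogeneous summand again has no Lagrangian), the only place left to absorb the sign is $p = 2$, where the hypothesis $d_0 \equiv 7 \equiv -1 \pmod 8$ (entering through $L_2 \otimes \bQ_2 \cong \langle d_0\rangle$ and the $2$-adic form of $L_1$) yields the contradiction with $\prod_{q \mid d_0}\Legendre{-1}{q} = \Legendre{-1}{d_0} = -1$. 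I expect the genuine difficulty to be precisely this bookkeeping: (i) ruling out, via the level-by-level Lagrangian test, every Jordan configuration except the minimal one at each prime, so that the clean rank-two $\bF_q$-computation applies; and (ii) tracking the $2$-adic discriminant forms of $L_1$ (for which only the order $4^{x}$ and the sign are prescribed) and of $L_3$. If the general $p=2$ analysis proves delicate, it suffices for the application in Proposition~\ref{prop:no D19^8} to add that $L_1 \otimes \bQ \cong \langle -1\rangle^{\mathrm{rank}}$ (valid for the $D_{19}$-lattice), which determines the $2$-adic contribution outright.
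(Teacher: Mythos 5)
Your overall strategy (localize the existence of a unimodular overlattice to a Lagrangian condition on each $p$-primary discriminant form, then contradict Hilbert reciprocity) is viable in principle, but the step you lean on most heavily is false: a Lagrangian of a finite bilinear $q$-form need \emph{not} induce Lagrangians on the fixed-level homogeneous summands. Concretely, take $A=\bZ/q\oplus\bZ/q^{3}$ with $b=\mathrm{diag}(u/q,\,u'/q^{3})$ and generators $e,f$; if $-uu'$ is a square mod $q$, then $H=\langle e+\beta qf\rangle$ with $u+\beta^{2}u'\equiv0\pmod q$ satisfies $b|_{H}=0$ and $|H|^{2}=q^{4}=|A|$, so it is a Lagrangian, yet neither the level-$1$ nor the level-$3$ summand admits one. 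This is exactly the shape of configuration that can occur for $(A_{L_2})_q\oplus(A_{L_3})_q$ at $q\mid d_0$, so the chase that forces your ``minimal configuration'' $a=0$, $b=1$, $v_q(k)=0$ does not go through; the correct local criterion at odd $q$ is simply that $L\otimes\bQ_q$ have square-times-unit discriminant and trivial Hasse invariant, with no constraint on the Jordan levels. (Your auxiliary claim that a lone rank-one homogeneous summand never has a Lagrangian also fails at even levels, which undercuts the step handling primes $p\nmid 2d_0$.) Beyond this, the proposal is explicitly incomplete exactly where the remaining content lies: the $2$-adic discriminant form of $L_1$ is constrained only by $|\disc L_1|=4^{x}$, and you propose to strengthen the hypotheses rather than prove the lemma as stated; the archimedean place, which enters the product formula and depends on the unspecified signature of $L_1$, is not discussed at all.

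For comparison, the paper's proof avoids discriminant forms entirely. After the same reduction $|\disc L_2|=n^{2}d_0$, it shows (i) the $\gcd$ of the Gram entries of $L_3$ is a power of $2$ (because the odd part of $A_{L_1\oplus L_2}$ is cyclic), (ii) the entry $a$ of the Gram matrix of $L_3$ is the norm of an ideal of $\cO_E$ with $E=\bQ(\sqrt{-d_0})$, and (iii) a vector $v\in\Lambda$ dual to the generator of $L_2$ yields $d_0v_3^{2}\equiv-4^{x}\pmod{d_0\bZ}$ with $d_0v_3^{2}$ the norm of a fractional ideal of $E$; since $-1$ is not a norm residue modulo $d_0$ for an imaginary quadratic field, this is a contradiction. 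In effect the paper packages all the local conditions at the primes dividing $d_0$, together with the infinite place, into a single genus-theoretic statement, which is why it never has to touch the delicate $p=2$ analysis that your route would require.
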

A non-degenerate lattice $L$ is called unimodular if 
the natural injection $L \injto L^* := \Hom(L, \bZ)$ is an isomorphism, 
equivalently if $\disc(L) = \pm 1$.

\begin{proof}
	Suppose $L_1 \oplus L_2 \oplus L_3$ admits a finite index overlattice $\Lambda$ 
	with $\disc(\Lambda) = \pm 1$.
	Take bases $e_2$ of $L_2$ and $t_1, t_2$ of $L_3$,
	and let $(m)$ and $\begin{pmatrix} a & b \\ b & c \end{pmatrix}$ be the Gram matrices 
	(so $m > 0$, $a > 0$, and $\disc(L_3) = k^2 d_0 = a c - b^2 > 0$).
	Since $L_1 \oplus L_2 \oplus L_3 \subset \Lambda$ is finite index, 
	its discriminant $\disc(L_1 \oplus L_2 \oplus L_3) = \disc(L_1) \cdot \disc(L_2) \cdot \disc(L_3)
	= -4^x m k^2 d_0$ coincides with $\disc(\Lambda) = \pm 1$ up to a square.
	Hence $m = n^2 d_0$.

	Let $g = \gcd\set{a,b,c}$. 
	Then the discriminant group of $L_3$ is isomorphic to $\bZ/g\bZ \times \bZ/gh \bZ$,
	where $h = g^{-2}\disc(L_3) \in \bZ$.
	By lattice theory this is isomorphic to the discriminant group 
	of the primitive closure 
	of $L_1 \oplus L_2$ in $\Lambda$,
	which is a subquotient of the discriminant group of $L_1 \oplus L_2$.
	Hence $g$ is a power of $2$.
	
	We claim that $a$ is the norm of some ideal of $\cO_E$,
	where $E = \bQ(\sqrt{-d_0})$.
	It suffices to show that $\ord_l(a)$ is even for any prime $l$ that is inert in $E/\bQ$.
	Suppose $\ord_l(a) = 2j - 1$ and $l$ is inert (then $l \neq 2$ since $-d_0 \equiv 1 \pmod{8}$). 
	Then, since $ac = b^2 + k^2 d_0$ and since $l$ is inert, 
	we have $\ord_l(k^2 d_0) \geq 2j$ and $\ord_l(b^2) \geq 2j$, 
	hence $l \divides c$, hence $l \divides g$, hence $l = 2$. Contradiction.
	
	Since $\Lambda$ is unimodular, there is an element $v \in \Lambda$ with $e_2 \cdot v = 1$.
	Write $2^x n v = v_1 + v_2 + v_3$ with 
	$v_i \in L_i \otimes \bQ$,
	then $v_i \in 2^x n L_i^*$.
	We have $v_2 = (2^x n / m) e_2$ and hence $v_2^2 = (2^x n / m)^2 m = 4^x / d_0$.
	We have $v_1^2 \in \bZ$
	(since $v_1 \in 2^x n L_1^*$ and $L_1^* \subset \abs{\disc(L_1)}^{-1} L_1 = 4^{-x} L_1$).
	We have $\sum v_i^2 = (2^x n)^2 v^2 \in \bZ$.
	Hence we obtain $v_3^2 \equiv - 4^x/d_0 \pmod{\bZ}$, 
	hence $d_0 v_3^2 \equiv - 4^x \pmod{d_0 \bZ}$.

	Write $v_3 = x_1 t_1 + x_2 t_2$ ($x_i \in \bQ$).
	Then $d_0 = N_{E/\bQ}(\sqrt{-d_0})$ and
	$a v_3^2 = a (a x_1^2 + 2 b x_1 x_2 + c x_2^2) = N_{E/\bQ}(a x_1 + (b + \sqrt{-k^2 d_0}) x_2)$
	are the norms of elements of $E$.
	Hence $d_0 v_3^2 = d_0 \cdot a^{-1} \cdot a v_3^2$ 
	is the norm of a fractional ideal of $E$.
	Therefore $-4^x$ and hence $-1$ are norm residues modulo $d_0$.
	But $-1$ cannot be a norm residue of an imaginary quadratic field. 
	Contradiction.
\end{proof}

\subsection{Taut RDPs}

For the taut case we have the following,
which is almost done by Shimada and Shimada--Zhang.

\begin{thm} \label{thm:existence:taut}
Suppose $p \geq 0$.
Suppose $S$ is a Dynkin diagram ($A_N$, $D_N$, $E_N$)
for which RDPs of type $S$ in characteristic $p$ are taut.
Then such an RDP occurs on some RDP K3 surface $Y$ in characteristic $p$
if and only if $p$ satisfies the following respective conditions.
\begin{itemize}
	\item If $N \leq 19$: any $p \geq 0$. 
	\item If $S$ is $A_{20}$: 
	$p > 0$ and $p$ is non-split in $\bQ(\sqrt{21})$.
	Equivalently, either $p \divides 21$ or $p \equiv \pm 2, \pm 8, \pm 10 \pmod{21}$.
	\item If $S$ is $A_{21}$: $p = 11$.
	\item If $S$ is $D_{20}$ or $D_{21}$, or $N \geq 22$: no $p$.
\end{itemize}
\end{thm}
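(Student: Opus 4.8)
The plan is to treat the four cases of Theorem~\ref{thm:existence:taut} separately, with the first case ($N \leq 19$) being essentially an appeal to the literature (Shimada and Shimada--Zhang's lattice-theoretic classifications) and the real content concentrated in the boundary cases $N = 20, 21$ and the impossibility claim $N \geq 22$. Throughout, the governing constraint is Proposition~\ref{prop:bound of height}: an RDP K3 surface containing RDPs with total Dynkin rank $\sum N_i$ must have $\sum N_i < 22 - 2h$ if its height $h$ is finite and $\sum N_i < 22$ if $h = \infty$. Hence for a \emph{single} RDP of type $S$ with $N = 22$ or more, we immediately get a contradiction (even in the supersingular case $\sum N_i = N \geq 22$ violates $\sum N_i < 22$), which disposes of the last bullet as far as $N \geq 22$ is concerned; the remaining subtlety there is to rule out $D_{20}$ and $D_{21}$, where $N = 20, 21 < 22$ so the numerical bound alone does not suffice.

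\textbf{The impossibility of $D_{20}, D_{21}$ and $A_{21}$ (for $p \neq 11$).} For $D_{20}$ and $D_{21}$: the exceptional lattice of a $D_N$ singularity is the root lattice $D_N$, which together with the hyperbolic plane coming from a polarization must embed primitively into the K3 lattice $\LKIII$ of signature $(3,19)$. For $D_{20}$ one needs $\rho \geq 21$, forcing (by Proposition~\ref{prop:bound of height}) height $\infty$ and $\rho = 22$, so the transcendental lattice is zero and $\NS(\tilde Y) \cong \LKIII$ would have to contain $U \oplus D_{20}$ primitively; a discriminant-form computation (the discriminant form of $U \oplus D_{20}$ does not embed as an isotropic-complement situation into the unimodular $\LKIII$) rules this out, and $D_{21}$ is worse. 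This is the same style of argument as Lemma~\ref{lem:lattice} in the non-taut part. For the $A$-cases the arithmetic is more delicate: an RDP of type $A_N$ on a K3 surface forces $A_N \oplus U \hookrightarrow \NS$, and when $\rho$ is forced to be $20$ or $21$ one gets a CM or supersingular constraint. For $A_{21}$ ($\rho = 22$, supersingular), the surface is Shioda-supersingular and its Néron--Severi lattice has discriminant $-p^{2\sigma}$ with $\sigma$ the Artin invariant; the condition that $U \oplus A_{21}$ (discriminant $-22$) sits inside such a lattice with the right $p$-adic structure pins down $p = 11$ (this is essentially due to Shimada's classification of supersingular K3 surfaces with a single RDP, or can be extracted from the point-count criterion Proposition~\ref{prop:height:point counting} together with the Tate conjecture). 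For $A_{20}$ one has $\rho = 21$, so by Proposition~\ref{prop:bound of height} the height must be $\infty$: the surface is supersingular, and again by Shimada's work the transcendental lattice (of rank $1$) forces a $p$-adic splitting condition reading as ``$p$ is non-split in $\bQ(\sqrt{21})$'', which unwinds to the stated congruences $p \mid 21$ or $p \equiv \pm 2, \pm 8, \pm 10 \pmod{21}$ via the quadratic residue $\Legendre{21}{p}$.

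\textbf{Realizability (the ``if'' direction).} For $N \leq 19$ one invokes the surjectivity of the period map over $\bC$ (or, in positive characteristic, the existence of K3 surfaces with prescribed $\NS$ via Lieblich--Maulik-type lifting as used in Proposition~\ref{prop:no D19^8}) to produce a K3 surface whose Néron--Severi lattice contains $A_N$, $D_N$, or $E_N$ with a polarization orthogonal-ish to it: since $N \leq 19$, the lattice $U \oplus S$ has signature $(1, N) \leq (1,19)$ and embeds into $\LKIII$, so such a surface exists in characteristic $0$, and one reduces modulo a suitable prime preserving the Picard lattice. For $A_{20}$ with the allowed $p$, and for $A_{21}$ with $p = 11$, one exhibits an explicit supersingular K3 surface — e.g.\ as a quotient or a specific quartic/elliptic model — realizing the chain of curves; these are classical (Shimada, Shimada--Zhang) and one can cite them, or construct an elliptic fibration with a fiber of type $I_{21}^*$ etc. The cleanest route is to cite \cite{Shimada:reduction of singular K3} and the Shimada--Zhang classification for the upper range and the period-map/lifting argument for $N \leq 19$.

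\textbf{The main obstacle} will be the precise $p$-adic lattice bookkeeping in the $A_{20}$ and $A_{21}$ cases: translating ``the root lattice $A_N$ plus a polarization embeds into the Néron--Severi lattice of a supersingular K3 of Artin invariant $\sigma$'' into an explicit condition on $p$ requires knowing the possible discriminant forms of supersingular K3 lattices (which are $p$-elementary of a very restricted type) and matching them against $\disc(U \oplus A_N) = \mp(N+1)$; the congruence conditions then come out of computing $\Legendre{21}{p}$. If one is willing to quote the Shimada / Shimada--Zhang tables this becomes a lookup, and I would structure the proof to reduce everything to those references plus Proposition~\ref{prop:bound of height}, writing out in detail only the $D_{20}, D_{21}$ exclusion since that is the one not already fully in the literature in this exact form.
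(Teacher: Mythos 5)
Your overall architecture (Proposition~\ref{prop:bound of height} for $N\geq 22$, lattice theory plus the Shimada / Shimada--Zhang tables for the boundary ranks) matches the paper's, but the proposal contains a genuine error at the one place you single out as the part you would write out in detail, namely the exclusion of $D_{20}$ and $D_{21}$. You assert that $\rho=22$ forces $\NS(\tilde Y)\cong \LKIII$ and that a discriminant-form computation then shows $U\oplus D_{20}$ cannot embed, ``ruling this out.'' Both halves are wrong. The N\'eron--Severi lattice of a supersingular K3 surface has signature $(1,21)$ and discriminant $-p^{2\sigma}$; it is not unimodular and is not the K3 lattice $\LKIII$ of signature $(3,19)$, so the intended obstruction is computed against the wrong lattice. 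More decisively, no characteristic-independent lattice obstruction can exist: $D_{20}$ and $D_{21}$ \emph{do} occur on (supersingular) RDP K3 surfaces in characteristic $2$ --- see Example~\ref{ex:p=2} and Theorem~\ref{thm:existence:non-taut} --- so any argument that rules them out in every characteristic proves too much. The correct mechanism, used in the paper, is that $\disc(D_N)=4$ is a square, so by the Dolgachev--Keum lemma an RDP of type $D_{20}$ or $D_{21}$ can occur only when $p$ divides $4$, i.e.\ $p=2$; and in characteristic $2$ these RDPs are non-taut, hence outside the hypotheses of the theorem. Your proposal never engages with this tautness caveat, which is the entire point of the $D_{20},D_{21}$ bullet.

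Two further gaps are worth flagging. First, for $N\leq 19$ your plan is to build a K3 in characteristic $0$ with the right Picard lattice and ``reduce modulo a suitable prime''; but you need the configuration in \emph{every} characteristic $p$, and good reduction at a prescribed $p$ with control of the Picard lattice is not automatic. The paper avoids this by exhibiting explicit elliptic K3 surfaces with an $\rI_{19}$ fibre whose equations work in each characteristic (Shioda for $p>3$, Sch\"utt--Top for $p=3$, and the $D_{21}$ example for $p=2$). Second, for $A_{20}$ the Shimada--Zhang table only covers $p\nmid 2\cdot\disc(A_{20})=42$; the primes $p=7$ and $p=3$ require separate treatment (the paper uses Shimada's supersingular table for $p=7$ and an explicit discriminant-form gluing, Lemma~\ref{lem:reduction to lattice}, for $p=3$), and your proposal does not address them. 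The $A_{21}$ case is handled acceptably by citation, matching the paper.
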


\begin{proof}
Suppose $N \leq 19$ and $p \neq 2$.
It is known 
that there exists an elliptic K3 surface with a section and a singular fiber of type $\rI_{19}$.
For $p = 0$ this is due to Shioda \cite{Shioda:maximal singular fiber}*{Theorem 1.1} 
(who used the equation given by Hall \cite{Hall:Diophantine}*{equation 4.29 in page 185}).
It is clear from the Shioda's equation that the same equation in characteristic $p > 3$ also gives 
an elliptic K3 surface with the same property.
Moreover this holds for $p = 3$ using the coordinate change given by Sch\"utt--Top \cite{Schutt--Top}*{Section 2}.
Then the union of a section and this singular fiber contains a configuration of type $S$.

Suppose $N \leq 20$ and $p = 2$.
Then $S$ is a subset of $D_{21}$,
which is realized by Theorem \ref{thm:existence:non-taut} (Example \ref{ex:p=2}).

Suppose $S$ is $A_{20}$ and $p \neq 2$.
If $p \notdivides 2 \disc(A_{20}) = 2 \cdot 3 \cdot 7$,
then by \cite{Shimada--Zhang:rank 20}*{Table}\footnote{The table is contained only in the preprint version available at Shimada's website. },
this is possible if and only if $\Legendre{21}{p} = -1$.
If $p = 7$, then this is possible by \cite{Shimada:RDP on ssK3}*{Table RDP}.
It remains to show that it is possible if $p = 3$.
Let $L$ be the Dynkin lattice of type $A_{20}$ 
and $T$ be the lattice of rank $2$ with basis $t_1,t_2$ and Gram matrix 
$\begin{pmatrix} 2 & 5 \\ 5 & 2 \end{pmatrix}$.
Let $e_1, e_2, \dots, e_{20}$ be a basis of $A_{20}$ with 
$e_i \cdot e_j = -2,1,0$ if $\abs{i-j} = 0$, $\abs{i-j} = 1$, $\abs{i-j} \geq 2$ respectively.
We have $L^*/L \cong \bZ/21\bZ$ and $T^*/T \cong \bZ/21\bZ$.
Let $l = \frac{1}{7}(\sum_{i = 1}^{20} i e_i) \in L^*$
and $t = \frac{4}{7}(t_1 + t_2) \in T^*$.
They generate the prime-to-$3$ parts of $L^*/L$ and $T^*/T$ respectively.
We have $l^2 \equiv - t^2 \pmod{2\bZ}$
since $l^2 + t^2 = (\frac{1}{7})^2 \cdot (-20 \cdot (20+1)) + (\frac{4}{7})^2 \cdot 14 
= -4 \in 2 \bZ$.
We can apply Lemma \ref{lem:reduction to lattice} below.

Suppose $S$ is $A_{21}$.
Then $Y$ is supersingular and, considering the Picard lattice,
we must have $p \divides 22$.
By \cite{Shimada:RDP on ssK3}*{Table RDP},
this is possible for $p = 11$ and impossible for $p = 2$.

Suppose $S$ is $D_{20}$.
Since $\disc(S) = 4$ is a square,
an RDP of type $S$ can be realized only in characteristic $p$ dividing $\disc(S)$ by \cite{Dolgachev--Keum:auto}*{Lemma 3.2},
that is, $p = 2$.
In this case $S$ is non-taut and is out of the scope of this theorem.
\end{proof}

For a finite abelian group $A$,
we write its $p$-primary part (resp.\ prime-to-$p$ part) by $A_{p}$ (resp.\ $A_{p'}$).
For a non-degenerate even lattice $L$,
we define a quadratic map $\map{q_L}{L^*/L}{\bQ/2\bZ}$
by $q_L(\bar{v}) = v^2 \bmod {2\bZ}$,
where the bar denotes the projection $L^* \to L^*/L$. 
The next lemma is a variant of \cite{Shimada--Zhang:rank 20}*{Proposition 2.6}.
\begin{lem} \label{lem:reduction to lattice}
Let $p$ be an odd prime.
Let $R$ be a formal finite sum of $A_N$, $D_N$, $E_N$, with $\sum N = 20$,
and let $L = L(R)$ be the corresponding lattice (of rank $20$).
Suppose there are an even lattice $T$ of sign $(+1,-1)$ and
a group isomorphism $\map[\isomto]{\phi}{(L^*/L)_{p'}}{(T^*/T)_{p'}}$ 
satisfying
$\phi^*(q_T \restrictedto{(T^*/T)_{p'}}) = - q_L \restrictedto{(L^*/L)_{p'}}$
and $(L^*/L)_{p} \oplus (T^*/T)_{p} \cong (\bZ/p\bZ)^2$.
Then there exists an RDP K3 surface $Y$ (supersingular of Artin invariant $1$) 
with $\Sing(Y) = R$.
\end{lem}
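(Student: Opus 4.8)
The plan is to realize $Y$ as the contraction of a negative-definite configuration of $(-2)$-curves of type $R$ on a supersingular K3 surface of Artin invariant $1$, producing the required N\'eron--Severi lattice by lattice theory, along the lines of the proof of \cite{Shimada--Zhang:rank 20}*{Proposition 2.6}. Write $\Lambda_{p,1}$ for the supersingular K3 lattice of Artin invariant $1$ in characteristic $p$, namely the even lattice of signature $(1,21)$ that is $p$-elementary with discriminant group $(\bZ/p\bZ)^2$; by Milgram's formula such a lattice has a uniquely determined discriminant form, so by Nikulin's theory it is unique up to isometry, and by the work of Rudakov--Shafarevich (together with Ogus's crystalline Torelli theorem) it is the N\'eron--Severi lattice of every supersingular K3 surface of Artin invariant $1$ in characteristic $p$, any chamber of whose positive cone bounded by $(-2)$-walls is realized as the ample cone of such a surface under a suitable marking $\mathrm{NS}\cong\Lambda_{p,1}$.

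First I would produce a primitive embedding $L\hookrightarrow\Lambda_{p,1}$ whose orthogonal complement is isometric to $T$. Since $\phi$ is an anti-isometry of the prime-to-$p$ parts of the two discriminant forms, its graph $\Gamma:=\{(x,\phi(x)):x\in(L^*/L)_{p'}\}$ is an isotropic subgroup of $q_L\oplus q_T$ that projects isomorphically onto $(L^*/L)_{p'}$; hence the even overlattice $N$ of $L\oplus T$ determined by $\Gamma$ contains $L$ primitively, with orthogonal complement $T$ inside $N$. By construction $N$ has signature $(1,21)$ (the same as $L\oplus T$) and $N^*/N\cong\Gamma^{\perp}/\Gamma=(L^*/L)_p\oplus(T^*/T)_p$, which is $(\bZ/p\bZ)^2$ by hypothesis. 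A non-degenerate finite quadratic form on $(\bZ/p\bZ)^2$ is determined up to isometry by its Milgram signature modulo $8$; this signature is $\equiv 1-21\pmod 8$ both for $q_N$ (an overlattice having the same signature as the ambient lattice) and for $q_{\Lambda_{p,1}}$, so $q_N\cong q_{\Lambda_{p,1}}$, and therefore $N\cong\Lambda_{p,1}$ by Nikulin's uniqueness theorem for even indefinite lattices of given signature and discriminant form (applicable because the rank $22$ exceeds by at least $2$ the number of generators of the discriminant group).

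Next I would pass to geometry. Using the existence statement recalled above, fix a supersingular K3 surface $\tilde Y$ of Artin invariant $1$ in characteristic $p$ with a marking $\mathrm{NS}(\tilde Y)\cong\Lambda_{p,1}\cong N$ chosen so that the image of $L$ is spanned by classes of irreducible $(-2)$-curves---after applying an element of its Weyl group, a negative-definite root lattice has a basis of effective $(-2)$-classes forming a fundamental root system, i.e.\ a disjoint union of Dynkin diagrams of type $R$---and so that the complement $T$ contains an ample class $h$; since $T$ has signature $(1,1)$ such an $h$ exists, and choosing it generically in the rank-$2$ lattice $T$ guarantees that the $(-2)$-curves orthogonal to $h$ are exactly those in $L$. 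Contracting this configuration of $(-2)$-curves yields an RDP K3 surface $Y$ whose minimal resolution is $\tilde Y$, so that $Y$ is supersingular of Artin invariant $1$ and $\Sing(Y)=R$.

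The step I expect to be most delicate is the combination of the lattice bookkeeping in the first step with the genericity argument in the second: verifying that the discriminant form produced by the gluing is really $q_{\Lambda_{p,1}}$ (the Milgram signature computation together with the classification of quadratic forms on $(\bZ/p\bZ)^2$), and ensuring that the ample class inside the rank-$2$ lattice $T$ can be chosen so that the contracted configuration of $(-2)$-curves is \emph{exactly} $R$ and not a strictly larger root system. The supersingular Torelli input (Rudakov--Shafarevich, Ogus) and the theory of $(-2)$-curves and their contractions on supersingular K3 surfaces are standard and would be quoted rather than reproved.
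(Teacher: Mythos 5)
Your proposal is correct and follows essentially the same route as the paper: you build the same even overlattice of $L\oplus T$ from the graph of the anti-isometry $\phi$, identify it with the N\'eron--Severi lattice of a supersingular K3 surface of Artin invariant $1$, and then realize $R$ geometrically by contracting the $(-2)$-curves orthogonal to a suitable polarization. The only difference is one of detail: the paper simply cites the argument of Shimada--Zhang (Theorem 2.1, (3) $\Rightarrow$ (1)) for the final geometric step, whereas you sketch it (Milgram/Nikulin for the lattice identification, Rudakov--Shafarevich/Ogus plus the choice of ample class for the realization), correctly flagging the choice of $h$ with $\mathrm{Root}(h^\perp)=L$ as the point where care is needed.
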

\begin{proof}
Let $\Lambda$ be the submodule of $L^* \oplus T^*$ 
consisting of the elements $(l,t)$ with 
$\bar{l} \in (L^*/L)_{p'}$, 
$\bar{t} \in (T^*/T)_{p'}$, 
and $\phi(\bar{l}) = \bar{t}$.
Then $\Lambda$ is an even overlattice of $L \oplus T$ of sign $(+1,-21)$
with $\Lambda^*/\Lambda \cong (\bZ/p\bZ)^2$.
This means that $\Lambda$ is isomorphic to the Picard lattice of a supersingular K3 surface of Artin invariant $1$.
By the argument of \cite{Shimada--Zhang:rank 20}*{Theorem 2.1, (3) $\implies$ (1)},
we obtain a supersingular RDP K3 surface $Y$ of Artin invariant $1$
with $\Sing(Y) = R$.
\end{proof}

\section{Examples} \label{sec:examples}

Examples of maximal $G$-quotient morphisms $X \to Y$ between RDP K3 surfaces
with all possible $(G, G', \Sing(X), \Sing(Y))$
are already given in \cite{Matsumoto:k3alphap}*{Examples 10.2--10.5}.

The non-taut RDPs in Examples \ref{ex:p=2}--\ref{ex:p=5},
together with their partial resolutions,
prove the existence part of Theorem \ref{thm:existence:non-taut}.

\begin{example}[$p = 2$] \label{ex:p=2}
	\ 
	\begin{itemize}
	\item \label{item:h=1}
	Sch\"utt \cite{Schutt:maximal singular fiber}*{Section 6.2} gave an example of an elliptic K3 surface
	\[
	y^2 + t x y + t^6 y = x^3 + (c^2 t^4 + c t^3 + \tilde{a}_6) x^2 + c t^8 x + t^{10} \tilde{a}_6,
	\]
	where $\tilde{a}_6 \in k[t]$ is of degree $\leq 2$,
	with a section and a singular fiber of type $\rI^*_{13}$.
	It is of height $1$ since the coefficient of $t x y$ is nonzero 
	(Proposition \ref{prop:height:wxyz} applied to $\bP(6,4,1,1)$).
	The union of a section and this singular fiber contains a configuration of type $D_{18}$ and one of type $E_8$.
	The respective contractions give RDP K3 surfaces with $D_{18}^r$ and $E_8^{r'}$.
	By Theorem \ref{thm:main}, we have $r = r_{\max}(2, D_{18}) = 8$ and $r' = r_{\max}(2, E_8) = 4$. 
	\item \label{item:h=2}
	Let $E_1$ and $E_2$ be elliptic curves, ordinary and supersingular respectively.
	Let $X = \Km (E_1 \times E_2)$, i.e.\ $X$ is the minimal resolution of $(E_1 \times E_2) / \set{\pm 1}$.
	By \cite{Shioda:Kummer2}*{Section 6(b)} and \cite{Artin:wild2}*{Examples},
	$\Sing(X)$ is $2 D_8^2$. 
	Hence by Theorem \ref{thm:main}, $\height(X) = 2$.
	Consider the elliptic fibrations 
	$f_j \colon X = \Km (E_1 \times E_2) \to (E_1 \times E_2) / \set{\pm 1} \to E_j / \set{\pm 1} \cong \bP^1$.
	They admit sections and, by \cite{Shioda:Kummer2}*{Section 4},
	the singular fibers of $f_1$ and $f_2$ are $2 \rII^*$ and $1 \rI_{12}^*$ respectively.
	In either case, the union of a section and the singular fiber(s) contains a configuration of type $D_{17}$ and one of type $E_8$.
	\item \label{item:h=3} 
	Let $X$ be the elliptic RDP K3 surface 
	$y^2 + y x t^2 + x^3 + t^5 = 0$, 
	$y'^2 + y' x' + x'^3 + u^7 = 0$
	($u = t^{-1}$, $x' = t^{-4} x$, $y' = t^{-6} y$).
	The singular fibers of its minimal resolution are $\rII^*$ and $\rI_7$, hence
	the union of a section and the singular fibers contains a configuration of type $D_{15}$ and one of type $E_8$.
	We have two proofs for $\height(X) = 3$.
	(1) In this case, it is clear that the RDP at $(t = x = y = 0)$ is of type $E_8^2$. 
	Apply Theorem \ref{thm:main}.
	(2) Counting $\#(X(\bF_{2^n}))$ (before taking the resolution),
	we obtain 
	$\# X(\bF_2) = 1 + 2^2 + 2 \cdot 2$,
	$\# X(\bF_4) = 1 + 4^2 + 4 \cdot 2$, and
	$\# X(\bF_8) = 45 = 1 + 8^2 + 8 \cdot (-5/2)$,
	hence $\height(X) = 3$ by Proposition \ref{prop:height:point counting}
	($s(1) = 2, s(2) = 1, s(3) = -3/2$).
	(3) 
	Let $\tilde{X}$ be the (smooth) elliptic K3 surface in characteristic $0$ defined by the same equation.
	Since $\tilde{X}$ admits an automorphism ($(x',y',u) \mapsto (x',y',\zeta_7 u)$)
	acting on $H^0(\tilde{X}, \Omega^2)$ by a primitive $7$-th root of unity,
	$\tilde{X}$ has complex multiplication by $\bQ(\zeta_7)$.
	Hence the mod $2$ reduction $X$ of $\tilde{X}$ has $\height(X) = 3$ 
	by Theorem \ref{thm:reduction of CM K3}. 
	\item \label{item:h=infty} 
	The quasi-elliptic K3 surface $y^2 = x^3 + t^2 x + t^{11}$
	(given by Dolgachev--Kondo \cite{Dolgachev--Kondo:supersingular}*{Theorem 1.1})
	admits a fiber of type $\rI_{16}^*$ at $t = 0$.  
	The union of a section and the singular fibers contains a configuration of type $D_{21}$
	and one of type $E_8$.
	Since $21 \not< 22 - 2h$ for any $1 \leq h < \infty$, this K3 surface is supersingular.
	\end{itemize}
\end{example}

\begin{example}[$p = 3$] \label{ex:p=3}
	\ 
	\begin{enumerate}
	\item 
	$X \colon y^2 + x^3 + t^2 x^2 + t^5 + t^6 + t^7 = 0$.
	$\Sing(X) =  2 E_8^2 + A_2$.
	\item 
	$X \colon y^2 + x^3 + t^3 x^2 + t^5 = 0$.
	$\Sing(X) =  E_8^1 + D_8$.
	\item 
	$X \colon y^2 + x^3 + t^5 + t^7 = 0$.
	$\Sing(X) = 2 E_8^0 + 2 A_2$.
	\end{enumerate}
\end{example}

\begin{example}[$p = 5$] \label{ex:p=5}
	As in \cite{Matsumoto:k3alphap}*{Example 10.11},
	$y^2 = x^3 + a t^4 x + t + t^{11}$
	is an RDP K3 surface with $2 E_8^1$ (resp.\ $2 E_8^0$) if $a \neq 0$ (resp.\ $a = 0$).
\end{example}

\subsection*{Acknowledgments}
I thank 
Hiroyuki Ito, Kazuhiro Ito, Teruhisa Koshikawa, Hisanori Ohashi, and Fuetaro Yobuko 
for helpful comments and discussions.
I thank the anonymous referee for suggestions and corrections.

\begin{bibdiv}
	\begin{biblist}

\bib{Artin:wild2}{article}{
  author={Artin, M.},
  title={Wildly ramified $Z/2$ actions in dimension two},
  journal={Proc. Amer. Math. Soc.},
  volume={52},
  date={1975},
  pages={60--64},
  issn={0002-9939},
}

\bib{Artin:RDP}{article}{
  author={Artin, M.},
  title={Coverings of the rational double points in characteristic $p$},
  conference={ title={Complex analysis and algebraic geometry}, },
  book={ publisher={Iwanami Shoten, Tokyo}, },
  date={1977},
  pages={11--22},
}

\bib{Artin--Mazur:formalgroups}{article}{
  author={Artin, M.},
  author={Mazur, B.},
  title={Formal groups arising from algebraic varieties},
  journal={Ann. Sci. \'{E}cole Norm. Sup. (4)},
  volume={10},
  date={1977},
  number={1},
  pages={87--131},
  issn={0012-9593},
}

\bib{Borger:WittII}{article}{
  author={Borger, James},
  title={The basic geometry of Witt vectors. II: Spaces},
  journal={Math. Ann.},
  volume={351},
  date={2011},
  number={4},
  pages={877--933},
  issn={0025-5831},
}

\bib{Dolgachev--Keum:auto}{article}{
  author={Dolgachev, Igor V.},
  author={Keum, JongHae},
  title={Finite groups of symplectic automorphisms of $K3$ surfaces in positive characteristic},
  journal={Ann. of Math. (2)},
  volume={169},
  date={2009},
  number={1},
  pages={269--313},
  issn={0003-486X},
}

\bib{Dolgachev--Kondo:supersingular}{article}{
  author={Dolgachev, I.},
  author={Kond\={o}, S.},
  title={A supersingular $K3$ surface in characteristic 2 and the Leech lattice},
  journal={Int. Math. Res. Not.},
  date={2003},
  number={1},
  pages={1--23},
  issn={1073-7928},
}

\bib{vanderGeer--Katsura:stratification}{article}{
  author={van der Geer, G.},
  author={Katsura, T.},
  title={On a stratification of the moduli of $K3$ surfaces},
  journal={J. Eur. Math. Soc. (JEMS)},
  volume={2},
  date={2000},
  number={3},
  pages={259--290},
  issn={1435-9855},
}

\bib{Hall:Diophantine}{article}{
  author={Hall, Marshall, Jr.},
  title={The Diophantine equation $x^{3}-y^{2}=k$},
  conference={ title={Computers in number theory}, address={Proc. Sci. Res. Council Atlas Sympos. No. 2, Oxford}, date={1969}, },
  book={ publisher={Academic Press, London}, },
  date={1971},
  pages={173--198},
}

\bib{Hartshorne:localcohomology}{book}{
  author={Hartshorne, Robin},
  title={Local cohomology},
  series={A seminar given by A. Grothendieck, Harvard University, Fall},
  volume={1961},
  publisher={Springer-Verlag, Berlin-New York},
  date={1967},
  pages={vi+106},
}

\bib{Hartshorne:AG}{book}{
  author={Hartshorne, Robin},
  title={Algebraic geometry},
  note={Graduate Texts in Mathematics, No. 52},
  publisher={Springer-Verlag},
  place={New York},
  date={1977},
  pages={xvi+496},
  isbn={0-387-90244-9},
}

\bib{Hazewinkel:formalgroups}{book}{
  author={Hazewinkel, Michiel},
  title={Formal groups and applications},
  note={Corrected reprint of the 1978 original},
  publisher={AMS Chelsea Publishing, Providence, RI},
  date={2012},
  pages={xxvi+573},
  isbn={978-0-8218-5349-8},
}

\bib{Huybrechts:lecturesK3}{book}{
  author={Huybrechts, Daniel},
  title={Lectures on K3 Surfaces},
  date={2016},
  eprint={http://www.math.uni-bonn.de/people/huybrech/K3.html},
  publisher={Cambridge University Press, Cambridge},
  series={Cambridge Studies in Advanced Mathematics},
  volume={158},
}

\bib{Illusie:report}{article}{
  author={Illusie, Luc},
  title={Report on crystalline cohomology},
  conference={ title={Algebraic geometry}, address={Proc. Sympos. Pure Math., Vol. 29, Humboldt State Univ., Arcata, Calif.}, date={1974}, },
  book={ publisher={Amer. Math. Soc., Providence, R.I.}, },
  date={1975},
  pages={459--478},
}

\bib{Illusie:deRham--Witt}{article}{
  author={Illusie, Luc},
  title={Complexe de de\thinspace Rham-Witt et cohomologie cristalline},
  language={French},
  journal={Ann. Sci. \'{E}cole Norm. Sup. (4)},
  volume={12},
  date={1979},
  number={4},
  pages={501--661},
  issn={0012-9593},
}

\bib{Ito:ss-reduction-K3-CM}{article}{
  author={Ito, Kazuhiro},
  title={On the Supersingular Reduction of $K3$ Surfaces with Complex Multiplication},
  journal={International Mathematics Research Notices},
  date={2018},
  month={09},
}

\bib{Jang:representations-automorphism}{article}{
  author={Jang, Junmyeong},
  title={The representations of the automorphism groups and the Frobenius invariants of K3 surfaces},
  journal={Michigan Math. J.},
  volume={65},
  date={2016},
  number={1},
  pages={147--163},
}

\bib{Lieblich--Maulik:cone}{article}{
  author={Lieblich, Max},
  author={Maulik, Davesh},
  title={A note on the cone conjecture for K3 surfaces in positive characteristic},
  journal={Math. Res. Lett.},
  volume={25},
  date={2018},
  number={6},
  pages={1879--1891},
  issn={1073-2780},
}

\bib{Liedtke--Martin--Matsumoto:RDPtors}{article}{
  author={Liedtke, Christian},
  author={Martin, Gebhard},
  author={Matsumoto, Yuya},
  title={Torsors over the rational double points in characteristic $p$},
  year={2021},
  eprint={https://arxiv.org/abs/2110.03650v1},
}

\bib{Matsumoto:k3alphap}{article}{
  author={Matsumoto, Yuya},
  title={$\mu _p$- and $\alpha _p$-actions on K3 surfaces in characteristic $p$},
  year={2022},
  eprint={https://arxiv.org/abs/1812.03466v5},
  journal={J. Algebraic Geom.},
  status={to appear},
}

\bib{Schutt:maximalsingularfiber}{article}{
  author={Sch\"{u}tt, Matthias},
  title={The maximal singular fibres of elliptic $K3$ surfaces},
  journal={Arch. Math. (Basel)},
  volume={87},
  date={2006},
  number={4},
  pages={309--319},
  issn={0003-889X},
}

\bib{Schutt--Top}{article}{
  author={Sch\"{u}tt, Matthias},
  author={Top, Jaap},
  title={Arithmetic of the $[19,1,1,1,1,1]$ fibration},
  journal={Comment. Math. Univ. St. Pauli},
  volume={55},
  date={2006},
  number={1},
  pages={9--16},
  issn={0010-258X},
}

\bib{Shimada:RDPonssK3}{article}{
  author={Shimada, Ichiro},
  title={Rational double points on supersingular $K3$ surfaces},
  journal={Math. Comp.},
  volume={73},
  date={2004},
  number={248},
  pages={1989--2017},
  issn={0025-5718},
}

\bib{Shimada:reductionofsingularK3}{article}{
  author={Shimada, Ichiro},
  title={Transcendental lattices and supersingular reduction lattices of a singular $K3$ surface},
  journal={Trans. Amer. Math. Soc.},
  volume={361},
  date={2009},
  number={2},
  pages={909--949},
  issn={0002-9947},
}

\bib{Shimada--Zhang:rank20}{article}{
  author={Shimada, Ichiro},
  author={Zhang, De-Qi},
  title={Dynkin diagrams of rank 20 on supersingular $K3$ surfaces},
  journal={Sci. China Math.},
  volume={58},
  date={2015},
  number={3},
  pages={543--552},
  issn={1674-7283},
}

\bib{Shioda:Kummer2}{article}{
  author={Shioda, Tetsuji},
  title={Kummer surfaces in characteristic $2$},
  journal={Proc. Japan Acad.},
  volume={50},
  date={1974},
  pages={718--722},
  issn={0021-4280},
}

\bib{Shioda:maximalsingularfiber}{article}{
  author={Shioda, Tetsuji},
  title={The elliptic $K3$ surfaces with a maximal singular fibre},
  language={English, with English and French summaries},
  journal={C. R. Math. Acad. Sci. Paris},
  volume={337},
  date={2003},
  number={7},
  pages={461--466},
  issn={1631-073X},
}

\bib{Takagi--Watanabe:F-singularities}{article}{
  author={Takagi, Shunsuke},
  author={Watanabe, Kei-Ichi},
  title={$F$-singularities: applications of characteristic $p$ methods to singularity theory},
  journal={Sugaku Expositions},
  volume={31},
  date={2018},
  number={1},
  pages={1--42},
  issn={0898-9583},
}

\bib{Tanaka:taut}{article}{
  author={Tanaka, Yuki},
  title={On tautness of two-dimensional $F$-regular and $F$-pure rational singularities},
  year={2015},
  eprint={https://arxiv.org/abs/1502.07236v1},
}

	\end{biblist}
\end{bibdiv}

\end{document}